\theoremstyle{thmstyleone}%
\newtheorem{theorem}{Theorem}
\newtheorem{proposition}[theorem]{Proposition}%
\theoremstyle{thmstyletwo}%
\newtheorem{remark}{Remark}%
\newtheorem{lemma}{Lemma}
\theoremstyle{thmstylethree}%
\newtheorem{definition}{Definition}%
\begin{document}

\title[Insensitizing controls for a quasi-linear parabolic equation with diffusion depending on gradient of the state]{Insensitizing controls for a quasi-linear parabolic equation with diffusion depending on gradient of the state}


\author[1]{\fnm{Dany} \sur{Nina Huaman}}\email{danynina3003@gmail.com}
\equalcont{These authors contributed equally to this work.}
\author*[2]{\fnm{Miguel R.} \sur{Nu\~{n}ez-Ch\'{a}vez}}\email{miguel.chavez@ufmt.br}
\equalcont{These authors contributed equally to this work.}

\affil[1]{\orgdiv{Departamento de Matem\'atica}, \orgname{Universidad Nacional Agraria La Molina}, \orgaddress{\city{La Molina}, \state{Lima}, \country{Per\'u}}}

\affil*[2]{\orgdiv{Departamento de Matem\'atica}, \orgname{Universidade Federal de Mato Grosso}, \orgaddress{\city{Cuiab\'a}, \state{Mato Grosso}, \country{Brasil}}}


\abstract{In this paper, a quasi-linear parabolic equation with a diffusion term dependent on the gradient to the state with Dirichlet boundary conditions is considered. The goal of this paper is to prove the existence of control that insensitizes the system under study which is the case that Xu Liu left open in 2012. It is well known that the insensitizing  control problem is equivalent to a null controllability result for a cascade system, which  is obtained by duality arguments, Carleman estimates, and the Right Inverse mapping theorem. Also, some possible extensions and open problems concerning other quasi-linear systems are presented.}

\keywords{Quasi-linear equation, Null controllability, Carleman inequality, Insensitizing control}



\maketitle


\section{Introduction}\label{sec1}
 The problem of insensitizing  control was originally address by J. L. Lions in \cite{Lion1, Lion2}, leading to numerous papers on this topic  both in hyperbolic and parabolic equations.

Concerning the semi-linear heat equation, the first result was obtained in \cite{Bodart1} for a distributed control, more precisely
\begin{equation}\label{I-EC1}
\left\{\begin{array}{lll}
  y_t -   \Delta y+f(y)= \xi+u\chi_{\omega}  & \text{in}& \ \Omega\times(0,T),\\
  y= 0 &  \text{on} &\ \partial\Omega\times(0,T),\\
  y(0) = y_{0}+\tau \hat{y}_{0}&  \text{in} &\ \Omega,\\
\end{array}\right.
\end{equation}
where  $\Omega$  is open  set in  $\mathbb{R}^{N}$; $\chi_{\omega}$  denotes  the characteristic  function  of  the  open set  $\omega,\, \omega\subset\Omega;\, \xi,\, \hat{y}_{0}$  are  given  in $X_{1},\, X_{0}$ (certain spaces). The  data  of  the  state  equation  \eqref{I-EC1} is  incomplete  in  the  following  sense:
\begin{itemize}
	\item 
	$\hat{y}_{0}\in X_{0}$  is unknown  and  $\|\hat{y}_{0}\|_{X_{0}}=1$. 
	
	\item 
	$\tau\in \mathbb{R}$  is  unknown and  small  enough.
\end{itemize}
Also, considering  the  energy functional:
\begin{equation}\label{eq1}
\Phi(y):= \dfrac{1}{2} \iint_{\mathcal{O} \times (0,T)} |y(x,t;\tau,u)|^2 \,dx\,dt,
\end{equation}
the problem  of  insensitizing  controls   can  be   stated,  roughly, as follows: 
 
 \begin{itemize}
	 \item 
	We  say  that  the  control  $u$  insensitizes  $\Phi$ if
\begin{equation}\label{eq-1-1}
	\left|\frac{\partial \Phi}{\partial \tau}(y(\cdot,\cdot;\tau,u)) \Big|_{\tau=0} \right|=0,
\end{equation}
when $\eqref{eq-1-1}$  holds  the  functionals $\Phi$  is  locally  insensitive  to  the perturbation $\tau \hat{y}^{0}$.
    \item 
	Given $\epsilon>0,$  the  control  $u$  is said  to  $\epsilon$-insensitize $\Phi$ if
\begin{equation}\label{eq-1-2}
	\left|\frac{\partial \Phi}{\partial \tau}(y(\cdot,\cdot;\tau,u)) \Big|_{\tau=0} \right|\leq \epsilon.
\end{equation}

 \end{itemize}

In \cite{Bodart1}, the authors introduced and studied the notion of approximate insensitizing controls ($\epsilon$-insensitizing controls) of  \eqref{I-EC1}.  In order to get rid of the condition $y_0=0$, the authors  prove that  the problem of  $\epsilon$-insensitizing controls  is equivalent to an approximate controllability result for a cascade system which is established  therein. 

In \cite{Tere-z}, the condition $y_0=0$ was removed for the linear heat equation, instead the following condition was imposed: $\mathcal{O}\subset \omega$ or $\mathcal{O}=\Omega$. Furthermore, the authors proved that if the imposed condition is not satisfied, some negative results occur. In \cite{Tere1}, the author proved the existence of insensitizing controls for the same semilinear heat system and proved the existence of an initial datum in $L^2$ that cannot be insensitized. This last result is extended in \cite{Bodart2} to super-linear nonlinearities.

For parabolic systems arising from fluids dynamics the first attempt to treat the insensitizing problem is \cite{Cara3} for a large scale ocean circulation model (linear). In \cite{Guerrero2}, as we have already mentioned, the author treats both the case of a sentinel given by $L^2$-norm of the state and $L^2$-norm of the curl of the state of a linear Stokes system.
As long as insensitizing controls have been considered the condition $\omega\cap\mathcal{O}\neq \emptyset$ has always been imposed. But, from  \cite{TERE2} and \cite{Micu1}, we see that this is not a necessary condition for $\epsilon~$-~insensitizing controls. For instance, the authors have proved in \cite{Micu1} that there exist $\epsilon$-insensitizing controls of $\Phi$ for linear heat equations with no intersecting observation and control regions in one space dimension using the spectral theory.
Furthermore, the insensitizing problem, as we have seen in this special case, is directly related to control problems for coupled systems. In particular, one could ask whether it is possible to control both states of a coupled system just  by acting on one equation. In \cite{Guerrero2} and \cite{Gue1}, as well as some insensitizing problems, the author studied this problem respectively for Stokes and heat systems in a more general framework.

Continuing, in \cite{Simpo} the authors studied the insensitizing control problem with constraints on the control for a nonlinear heat equation for more general cost functionals, by means of the Kakutani's fixed point theorem combined with an adapted Carleman inequality. Some controllability results for equations  of the quasilinear parabolic type have been studied in  \cite{Dany-s, Irene-j,  Mig}, with respect to the   insensitizing  controllability   for  quasi-linear  parabolic  equations  have  been studied by Xu Liu in \cite{Xu-1} (2012). The author worked with H\"older space for the state $y$ and control $u$, more  precisely  the following  system was studied
\begin{equation}\label{I-EC2}
\left\{\begin{array}{lll}
  \displaystyle y_t -   \sum_{i,j=1}^{N} \left(a^{ij}(y)y_{x_{i}}\right)_{x_{j}}+f(y)= \xi+u\chi_{\omega}  & \text{in}& \ \Omega\times(0,T),\\
  y= 0 &  \text{on} &\ \partial\Omega\times(0,T),\\
  y(0) = y_{0}+\tau \hat{y}_{0}&  \text{in} &\ \Omega,\\
\end{array}\right.
\end{equation}
with the energy functional \eqref{eq1}.

When  the diffusion coefficient depend of gradient of state ($a^{ij}(\nabla y)$), this case was left open by Xu Liu  in \cite{Xu-1}. In this paper the main novelty is that we get to proof the existence of  insensitizing controls for the system \eqref{I-EC2} with the diffusion coefficient dependent on gradient of state. 


\section{Statements of the Main Results}\label{sec2}
\setcounter{equation}{0}

Let $\Omega \subset \mathbb{R}^{N}(N=1, 2 \  \mbox{or} \  3)$ be a bounded domain whose boundary $\Gamma$ is regular enough. Let $T > 0$ be given and let us consider the cylinder $Q=\Omega \times (0,T)$, with lateral boundary $\Sigma = \Gamma \times (0,T)$. Assume $\omega$ and $\mathcal{O}$ to be two given non-empty open subsets of $\Omega$. In the sequel, we will denote by $\chi_{\omega}$ the characteristic function of the subset $\omega$; $(\cdot,\cdot)$ and $\|\cdot\|$  respectively the $L^2$ scalar product and the norm in $\Omega$. The symbol $C$ is used to design a generic positive constant. 

We will consider the following system
\begin{equation}\label{EC1}
\left\{\begin{array}{lll}
  y_t -  \nabla \cdot\left(a(\nabla y) \nabla y\right)+f(y)= \xi+u\chi_{\omega}  & \text{in}& \ Q,\\
  y= 0 &  \text{on} &\ \Sigma,\\
  y(0) = y_{0}+\tau \hat{y}_{0}&  \text{in} &\ \Omega.\\
\end{array}\right.
\end{equation}

In system \eqref{EC1}, $y$ and $u$ are  respectively the  state variable and the control variable, $\xi$ and $y_{0}$ are two known functions,  $\tau$ is an unknown small real number, $\hat{y}_{0}$ is an  unknown function, $f(\cdot)$ and $a(\cdot)$ are given functions in $C^{2}(\mathbb{R};\mathbb{R})$ and $C^{3}(\mathbb{R}^{N};\mathbb{R})$ respectively, satisfying

\begin{equation}\label{hipotesis}
\left\{\begin{array}{l}
	0<a_{0}\leq a(x)\leq a_{1},\,\,\,\forall \, x\in \mathbb{R}^{N}\\
\noalign{\smallskip}

\|D_{i}a(x)\|_{\mathbb{R}^{N}} + \|D^2_{ij}a(x)\|_{\mathbb{R}^{N^{2}}} + \|D^3_{ijk}a(x)\|_{\mathbb{R}^{N^3}}\leq M, \ \forall \ x\in \mathbb{R}^{N},\,\, \\
\noalign{\smallskip}

f(0)=0,\,\,\ |f(r)|+|f'(r)|+|f''(r)| \leq M,  \,  \, \forall r\in \mathbb{R},
\end{array}\right.\\ 
\end{equation}
where $D_i a$, $D^2_{ij} a$ and $D^3_{ijk} a$ denote the first, second and third order derivatives (respectively) of the function $a(\cdot)$.

Also, we will consider the energy functional $\Phi(\cdot)$ for the system \eqref{EC1} as in \eqref{eq1}.

Let us define the following spaces
$$
X_{0}= \Big\{ w:\, w\in L^{2}(0,T;H^{2}(\Omega)), w_{t}\in L^{2}(0,T;L^{2}(\Omega)), w|_{\Sigma}=0 \Big\},
$$
and
$$
X_{1}=\Big\{ w:\, w\in L^{2}(0,T;H^{4}(\Omega)),\, w_{t}\in L^{2}(0,T;H^{2}(\Omega)),\, w_{tt}\in L^{2}(0,T;L^{2}(\Omega)),\, w|_{\Sigma}=0 \Big\}.
$$
Then, if $f=f(\cdot)$ and $a=a(\cdot)$ satisfy \eqref{hipotesis}, $\xi,\, u\chi_{\omega}\in X_{0}$, $y_{0},\, \hat{y}_{0}\in H^{3}(\Omega)\cap H^{1}_{0}(\Omega)$ with $\Delta y_0, \Delta \hat{y}_0 \in H_0^1(\Omega)$ and the functions $\xi,\, u\chi_{\omega},\, y_{0},\, \tau $ are sufficiently small in their spaces respectively, then    the equation \eqref{EC1} admits a unique solution  $y(\cdot,\cdot;\tau,u)\in X_{1}$ satisfying
\begin{equation}\label{H}
    \|y\|_{X_{1}}\leq C\left(\|\xi\|_{X_{0}} + \|u\chi_{\omega}\|_{X_{0}} + \|y_{0}+\tau \hat{y}_{0}\|_{H^{3}(\Omega)}\right),
\end{equation}
where $C$ is a positive constant depending only on $N,\, \Omega,\, T,\, M,\, a_{0}$ and $a_{1}$.\\
A result concerning the well possessedness of system \eqref{EC1} can be  found in the Appendix.\\

Now, we introduce the following notion:

\begin{definition}\label{def-ins}
For a given function $\xi\in X_{0}$ and $y_{0}\in H^{3}(\Omega)\cap H^{1}_{0}(\Omega)$, a control function $u \in X_{0}$ with $\mathrm{supp}\ u\subseteq \omega \times [0,T]$ is said  to insensitize the functional $\Phi$ defined in \eqref{eq1} if $u$ satisfies
\begin{equation}\label{eq-def-ins}
\frac{\partial \Phi}{\partial \tau}(y(\cdot,\cdot;\tau,u)) \Big|_{\tau=0}=0,\,\, \forall \hat{y}_{0}\in H^{3}(\Omega)\cap H^{1}_{0}(\Omega)\,\, \mbox{ with }\,\, \|\hat{y}_{0}\|_{H^{3}(\Omega)}=1.
\end{equation}
\end{definition}
Our main result is stated in the following Theorem:
\begin{theorem}\label{main-t}
Assume that $\omega\cap \mathcal{O}\neq \emptyset$ and $y_{0}=0.$ Then, there exist two positive constants $\tilde{M}$ and $\delta$ depending only on $N,\, \Omega,\, T,\, M,\, a_{0}$ and $a_{1}$, such that for any $\xi\in X_{0}$ satisfying
\begin{equation}\label{cond-xi}
	\left\|e^{{\frac{\tilde{M}}{t}}}\ \xi \right\|_{X_{0}} \leq \delta,
\end{equation}
one can find a control function $u\in X_{0}$  with $\mathrm{supp}\ u\subset \omega\times[0,T]$, which insensitizes the functional $\Phi$ in the sense of Definition \ref{def-ins}.
\end{theorem}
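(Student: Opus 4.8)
The plan is to follow the classical route for insensitizing problems, adapted to the quasi-linear, gradient-dependent setting, with the well-posedness result of the Appendix providing the $X_{1}$-regularity needed to make everything differentiable.

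\textbf{Step 1 (reduction to a cascade null controllability problem).} Set $y^{0}=y(\cdot,\cdot;0,u)$ and $y_{\tau}=\partial_{\tau}y(\cdot,\cdot;\tau,u)\big|_{\tau=0}$. Since $\Phi(y)=\tfrac12\iint_{\mathcal{O}\times(0,T)}|y|^{2}$, one has $\partial_{\tau}\Phi\big|_{\tau=0}=\iint_{\mathcal{O}\times(0,T)}y^{0}\,y_{\tau}$, where $y_{\tau}$ solves the linearization of \eqref{EC1} about $y^{0}$ with datum $\hat{y}_{0}$; differentiating $\nabla\cdot(a(\nabla y)\nabla y)$ shows its principal part is $-\nabla\cdot\big(A(\nabla y^{0})\nabla y_{\tau}\big)$ with $A(\nabla y^{0})=a(\nabla y^{0})\,\mathrm{Id}+\nabla y^{0}\otimes Da(\nabla y^{0})$, plus the zero-order term $f'(y^{0})y_{\tau}$. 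Introducing the adjoint state $z$ with $-z_{t}-\nabla\cdot\big(A(\nabla y^{0})^{*}\nabla z\big)+f'(y^{0})z=y^{0}\chi_{\mathcal{O}}$ in $Q$, $z=0$ on $\Sigma$, $z(T)=0$ (here $A^{*}$ is the transpose), an integration by parts gives $\iint_{\mathcal{O}\times(0,T)}y^{0}y_{\tau}=\int_{\Omega}z(0)\,\hat{y}_{0}\,dx$. As $\hat{y}_{0}$ runs (after removing the normalization) over a dense subset of $L^{2}(\Omega)$, Definition~\ref{def-ins} becomes: the solution $(y^{0},z)$ of the cascade system
\begin{equation*}
\left\{\begin{array}{ll}
y_{t}-\nabla\cdot(a(\nabla y)\nabla y)+f(y)=\xi+u\chi_{\omega}, & \\
-z_{t}-\nabla\cdot(A(\nabla y)^{*}\nabla z)+f'(y)z=y\chi_{\mathcal{O}}, & \\
y=z=0 \ \text{on }\Sigma,\quad y(0)=0,\quad z(T)=0 &
\end{array}\right.
\end{equation*}
must satisfy $z(0)=0$ in $\Omega$ (we used $y_{0}=0$). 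It thus suffices to solve this nonlinear, nonlocal controllability problem with $u$ supported in $\omega\times[0,T]$.

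\textbf{Step 2 (weighted functional setting and the map $\mathcal{F}$).} I would fix Fursikov--Imanuvilov type weights $\rho(t)$ blowing up like $e^{C/t}$ as $t\to0^{+}$ and like $e^{C/(T-t)}$ as $t\to T^{-}$, and define weighted Banach spaces $\mathcal{Y},\mathcal{Z}$ of states/adjoints with $X_{1}$-type regularity and $\mathcal{U}$ of controls supported in $\omega\times[0,T]$; the exponential weight $e^{\tilde M/t}$ on $\xi$ in \eqref{cond-xi} is precisely the one that places $\xi$ in the weighted target space. Defining, on the affine set $\{y(0)=0,\ z(T)=0\}$,
\[
\mathcal{F}(y,z,u)=\Big(\,y_{t}-\nabla\cdot(a(\nabla y)\nabla y)+f(y)-u\chi_{\omega}\,,\ -z_{t}-\nabla\cdot(A(\nabla y)^{*}\nabla z)+f'(y)z-y\chi_{\mathcal{O}}\,,\ z\big|_{t=0}\,\Big),
\]
one has $\mathcal{F}(0,0,0)=(0,0,0)$, and solving the cascade problem is equivalent to finding $(y,z,u)$ near $0$ with $\mathcal{F}(y,z,u)=(\xi,0,0)$.

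\textbf{Step 3 (Carleman estimate and surjectivity of $D\mathcal{F}(0,0,0)$).} The derivative $D\mathcal{F}(0,0,0)$ has constant isotropic principal part $-a(0)\Delta$ in both equations, so it is the cascade heat operator; its surjectivity with a bounded right inverse between the weighted spaces is equivalent, by duality, to a weighted null controllability statement for the linear cascade system with the constraint $z(0)=0$. This follows from a global Carleman inequality for the adjoint system $\{-\varphi_{t}-a(0)\Delta\varphi+f'(0)\varphi=\psi\chi_{\mathcal{O}},\ \psi_{t}-a(0)\Delta\psi+f'(0)\psi=0\}$ observed only on $\omega$: the hypothesis $\omega\cap\mathcal{O}\neq\emptyset$ is exactly what permits transferring the observation of $\psi$ to one of $\varphi$ in $\omega$, following the argument of \cite{Tere1, Tere-z}. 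Since the principal part is the plain Laplacian, the Carleman estimate and the resulting observability inequality are the classical ones; the only care needed is to keep the weights compatible with those of Step 2.

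\textbf{Step 4 (right-inverse mapping theorem) and the main difficulty.} It remains to check that $\mathcal{F}$ is well defined and of class $C^{1}$ from a neighborhood of $0$ in $\mathcal{Y}\times\mathcal{Z}\times\mathcal{U}$ into the weighted target space. This is where hypotheses \eqref{hipotesis} — the bounds on $f,f',f''$ and on the derivatives of $a$ up to order three — and the $X_{1}$-regularity are used, since $\nabla\cdot(a(\nabla y)\nabla y)$ and $\nabla\cdot(A(\nabla y)^{*}\nabla z)$ involve products of first and second spatial derivatives of $y$ and $z$ that must be estimated in the weighted norms, and their Fréchet differentiability established. Granting this, the surjectivity and bounded right inverse of $D\mathcal{F}(0,0,0)$ from Step 3 together with the right-inverse mapping theorem provide a neighborhood $V$ of $0$ in the target space such that every $(\xi,0,0)\in V$ has a preimage $(y,z,u)$; the smallness $\|e^{\tilde M/t}\xi\|_{X_{0}}\le\delta$ guarantees $(\xi,0,0)\in V$, and the associated $u$ insensitizes $\Phi$ in the sense of Definition~\ref{def-ins}. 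I expect the main obstacle to be this Step 4 combined with the choice of weighted spaces in Step 2: one must pick weights and regularity that simultaneously make $D\mathcal{F}(0,0,0)$ admit a bounded right inverse, make $\mathcal{F}$ a $C^{1}$ map, and render the quasi-linear remainder genuinely superlinear in those norms — this compatibility, rather than the Carleman inequality itself, is the delicate point and, plausibly, the reason the gradient-dependent diffusion case had remained open.
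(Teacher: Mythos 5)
Your proposal is correct and follows essentially the same route as the paper: reduction to the cascade system \eqref{eq-char} (your adjoint operator $\nabla\cdot(A(\nabla y)^{*}\nabla z)$ coincides with the paper's $\nabla\cdot\{Da(\nabla y)(\nabla y\cdot\nabla h)+a(\nabla y)\nabla h\}$), Carleman estimates for the linearized adjoint system observed on $\omega$ using $\omega\cap\mathcal{O}\neq\emptyset$, weighted null controllability of the linearization, and Liusternik's right-inverse mapping theorem. You also correctly single out the genuinely delicate point — choosing weights and regularity (the spaces $E$, $F$ and the extra estimates of Propositions \ref{p-null-c-1}--\ref{p-null-c-2}) so that $\mathcal{A}$ is $C^{1}$ while $\mathcal{A}'(0)$ remains onto — which is exactly where the paper concentrates its effort.
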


The rest of the paper is organized as follows. 

In Section 3, we will prove some technical results, this is, firstly, we will reformulate the insensitizing problem to null controllability problem for a optimality quasilinear system; secondly, we will study null controllability of the linearized system (to the optimality quasilinear system) applying Carleman estimates and finally we will find some additional estimates for the state.  

In Section 4, we will give the proof  of  Theorem~$\ref{main-t}$, thus, we will prove the null controllability for the optimality quasi-linear system applying the Liusternik's method.

Section 5 will deal with some additional comments and results. 

In Appendix, we will give a proof of the  well-possessedness of the quasilinear system~\eqref{EC1}.


\section{Preliminary results}\label{sec3}
\setcounter{equation}{0}

\subsection{Reformulation of the insensitizing problem:}

The special form of $\Phi$ allows us to reformulate  our insensitizing  problem as a controllability problem of a cascade system (for more details, see \cite{Xu-1}, for instance).
\begin{proposition}\label{char-e}
Assume that  $\xi, \, u \in X_{0}$ satisfies \eqref{cond-xi} and $y_{0}=0$. If $u$ and $\xi$  are sufficiently small in their spaces respectively    and the corresponding solution $(y,h)\in {X_{1} \times X_{0}}$ of the following nonlinear system
\begin{equation}\label{eq-char}
\left\{\begin{array}{lll}
y_{t}-\nabla\cdot (a(\nabla y)\nabla y)+f(y)=\xi+u\chi_{\omega} & \mbox{in} &Q,\\
\noalign{\smallskip}
-h_{t}-\nabla \cdot \left\{Da(\nabla y)(\nabla y \cdot \nabla h)+a(\nabla y)\nabla h\right\} +f'(y)h=y\chi_{\mathcal{O}} &\mbox{in} & Q,\\
\noalign{\smallskip}
y=0,\, h=0  &\mbox{on} & \Sigma,\\
\noalign{\smallskip}
y(x,0)=0,\, h(x,T)=0 &\mbox{in} & \Omega,
\end{array}\right.
\end{equation}
satisfies $h(x,0)=0$ in $\Omega,$ then $u$ insensitizes the functional $\Phi$ $($defined by \eqref{eq1}$)$.
\end{proposition}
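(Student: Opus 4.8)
The plan is the classical reformulation of an insensitizing problem as a control problem for a cascade system (see \cite{Bodart1,Xu-1}). I would: (i) show that $\tau\mapsto y(\cdot,\cdot;\tau,u)$ is differentiable at $\tau=0$ and identify its derivative $z$ as the solution of the linearization of \eqref{EC1}; (ii) express $\frac{\partial\Phi}{\partial\tau}\bigl(y(\cdot,\cdot;\tau,u)\bigr)\big|_{\tau=0}$ as $\iint_{\mathcal{O}\times(0,T)}y\,z\,dx\,dt$; (iii) recognize the $h$-equation in \eqref{eq-char} as the formal adjoint of the $z$-equation and, via an integration-by-parts (duality) identity, rewrite that integral as $\int_\Omega\hat{y}_0(x)\,h(x,0)\,dx$; (iv) conclude that $h(\cdot,0)\equiv 0$ forces \eqref{eq-def-ins}. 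Note that only the forward implication is needed here, so no controllability result enters at this stage.

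For (i), I would invoke the well-posedness and regularity theory of Section~\ref{sec2} (proved in the Appendix), together with the implicit function theorem and the regularity $f\in C^2(\mathbb{R})$, $a\in C^3(\mathbb{R}^N)$ of \eqref{hipotesis}, to obtain that for $\xi$ and $u$ small enough (so that $y$ and $\nabla y$ stay in a controlled range) the map $\tau\mapsto y(\cdot,\cdot;\tau,u)$ is of class $C^1$ from a neighbourhood of $0$ into $X_1$. Set $y:=y(\cdot,\cdot;0,u)$, which solves the first equation of \eqref{eq-char} because $y_0=0$, and $z:=\frac{\partial}{\partial\tau}y(\cdot,\cdot;\tau,u)\big|_{\tau=0}$. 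Differentiating \eqref{EC1} at $\tau=0$ and using $\frac{\partial}{\partial\tau}\bigl(a(\nabla y)\bigr)=Da(\nabla y)\cdot\nabla z$, one finds that $z$ solves
\begin{equation}\label{ec-z}
\left\{\begin{array}{lll}
z_t-\nabla\cdot\left\{\bigl(Da(\nabla y)\cdot\nabla z\bigr)\,\nabla y+a(\nabla y)\nabla z\right\}+f'(y)\,z=0 & \text{in} & Q,\\
\noalign{\smallskip}
z=0 & \text{on} & \Sigma,\\
\noalign{\smallskip}
z(\cdot,0)=\hat{y}_0 & \text{in} & \Omega.
\end{array}\right.
\end{equation}
Differentiating \eqref{eq1} under the integral sign then gives $\frac{\partial\Phi}{\partial\tau}\bigl(y(\cdot,\cdot;\tau,u)\bigr)\big|_{\tau=0}=\iint_{\mathcal{O}\times(0,T)}y\,z\,dx\,dt=\iint_Q(y\chi_{\mathcal{O}})\,z\,dx\,dt$.

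Step (iii) is the core of the argument. Writing the spatial part of \eqref{ec-z} in divergence form $z\mapsto-\nabla\cdot(A\nabla z)$ with $A_{ij}=a(\nabla y)\,\delta_{ij}+y_{x_i}\,(D_ja)(\nabla y)$, a direct computation gives $A^{T}\nabla v=a(\nabla y)\nabla v+Da(\nabla y)\,(\nabla y\cdot\nabla v)$, which is precisely the flux appearing in the second equation of \eqref{eq-char}; hence that equation is the formal adjoint of \eqref{ec-z}, with source $y\chi_{\mathcal{O}}$ and final datum $h(\cdot,T)=0$. Multiplying \eqref{ec-z} by $h$, the $h$-equation of \eqref{eq-char} by $z$, subtracting and integrating over $Q$, the second-order and zeroth-order contributions cancel, the lateral boundary terms vanish (since $z=h=0$ on $\Sigma$), and only the two time-endpoint terms survive; using $h(\cdot,T)=0$ and $z(\cdot,0)=\hat{y}_0$ this yields
\begin{equation}\label{key-id}
\frac{\partial\Phi}{\partial\tau}\bigl(y(\cdot,\cdot;\tau,u)\bigr)\Big|_{\tau=0}=\iint_Q(y\chi_{\mathcal{O}})\,z\,dx\,dt=\int_\Omega\hat{y}_0(x)\,h(x,0)\,dx,
\end{equation}
valid for every $\hat{y}_0\in H^3(\Omega)\cap H^1_0(\Omega)$. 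Consequently, if the solution $(y,h)\in X_1\times X_0$ of \eqref{eq-char} satisfies $h(\cdot,0)=0$ in $\Omega$, the right-hand side of \eqref{key-id} vanishes for all such $\hat{y}_0$ with $\|\hat{y}_0\|_{H^3(\Omega)}=1$; that is, \eqref{eq-def-ins} holds and $u$ insensitizes $\Phi$, as claimed.

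The main obstacle is step (i): rigorously establishing the differentiability of $\tau\mapsto y(\cdot,\cdot;\tau,u)$ at $\tau=0$, with derivative $z$ regular enough (ideally $z\in X_1$) for the subsequent integrations by parts to be licit, and solving \eqref{ec-z}. This is exactly where the quasilinear well-posedness and a priori estimates of the Appendix, and the smallness of $\xi$ and $u$ (which keeps $\nabla y$ in the region where $a(\nabla y)$ and its derivatives are bounded and smooth), are used. Once that regularity is in hand, the linearization in (i), the differentiation under the integral sign in (ii), and the duality identity in (iii) are routine computations.
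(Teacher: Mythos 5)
Your proposal is correct and follows essentially the same route as the paper's Appendix~\ref{Apex_3}: linearize in $\tau$, identify the derivative $z$ (the paper's $p$) as the solution of the linearized equation with initial datum $\hat{y}_0$, and use the duality between that equation and the $h$-equation of \eqref{eq-char} to get $\frac{\partial\Phi}{\partial\tau}\big|_{\tau=0}=\int_\Omega h(\cdot,0)\,\hat{y}_0\,dx$. The only difference is in the step you flag as the main obstacle: rather than invoking the implicit function theorem, the paper proves the differentiability directly by energy estimates on the difference quotient $p_\tau=\frac{y_\tau-y}{\tau}-p$ together with Gronwall's inequality, using the smallness of $\xi$ and $u$ and the $C(0,T;L^\infty)$ bounds on $\nabla y$ and $\nabla p$ from the Appendix well-posedness lemmas.
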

\begin{proof}
Proof of this Proposition  can be  found in the Appendix \ref{Apex_3}.
\end{proof}

In order to prove that the system \eqref{eq-char} is null controllable at time $t=0$, we consider the linearized system for \eqref{eq-char}

\begin{equation}\label{linear-1}
\left\{\begin{array}{lll}
y_{t}-a(0)\Delta y+Ay=u\chi_{\omega}+g_{1} & \mbox{in} &Q,\\
\noalign{\smallskip}
-h_{t}-a(0) \Delta h+Ah=y\chi_{\mathcal{O}}+g_{2} &\mbox{in} & Q,\\
\noalign{\smallskip}
y=0,\, h=0  &\mbox{on} & \Sigma,\\
\noalign{\smallskip}
y(x,0)=0,\, h(x,T)=0 &\mbox{in} & \Omega,
\end{array}\right.
\end{equation}
where  $A=f'(0)$ and the adjoint system of \eqref{linear-1}
\begin{equation}\label{adjoint-1}
\left\{\begin{array}{lll}
-\varphi_{t}-a(0)\Delta \varphi+A\varphi=\psi \chi_{\mathcal{O}}+G_{1} & \mbox{in} &Q,\\
\noalign{\smallskip}
\psi_{t}-a(0) \Delta \psi+A\psi=G_{2} &\mbox{in} & Q,\\
\noalign{\smallskip}
\varphi=0,\, \psi=0  &\mbox{on} & \Sigma,\\
\noalign{\smallskip}
\varphi(x,T)=0,\, \psi(x,0)=\psi_{0}(x) &\mbox{in} & \Omega.
\end{array}\right.
\end{equation}

\subsection{Carleman estimates for \eqref{adjoint-1} }

In the context of the null controllability  analysis of parabolic systems, Carleman estimates are a very
 powerful tool (see \cite{Furs, Ima}). In order to state our Carleman estimates we need  to define some weight functions. Let  $\omega_{0}$ be a non-empty open subset of $\omega \cap \mathcal{O},$ and set

\begin{equation}\label{weight-1}
\sigma(x,t)=\frac{e^{4\lambda \|\eta^{0}\|_{\infty}}-e^{\lambda(2\|\eta^{0}\|_{\infty}+\eta^{0}(x))}}{t(T-t)},\,\, \xi(x,t)=\frac{e^{\lambda(2\|\eta^{0}\|_{\infty}+\eta^{0}(x))}}{t(T-t)},
\end{equation}
for some parameter $\lambda>0.$ Here, $\eta^{0}\in C^{2}(\overline{\Omega})$ stands for a function that satisfies 
\begin{equation}\label{f-fursik}
|\nabla \eta^{0}|\geq k_{0}>0\,\, \mbox{ in}\,\, \Omega \setminus \overline{\omega}_{0},\, \eta^{0}>0\,\,  \mbox{in}\,\,  \Omega\,\, \mbox{ and} \,\, \eta^{0}=0\,\,\ \mbox{on} \,\,\, \partial \Omega.
\end{equation}
The proof of the existence of such a function $\eta^{0}$ can be found in \cite{Furs}.

This kind of weight functions was also used in \cite{Dany-s, Clark2}. We introduce the following notation:

$$
I(s,\lambda;\phi):=\iint_{Q}e^{-2s\sigma}\Big[(s\xi)^{-1}(|\phi_{t}|^{2}+|\Delta \phi|^{2})+\lambda^{2}s\xi |\nabla \phi|^{2}+\lambda^{4}(s\xi)^{3}|\phi|^{2}\Big]\, dxdt.
$$
We will deduce a Carleman inequality for the solutions to systems of the kind \eqref{adjoint-1}.

\begin{proposition}\label{carl-1}
Let us assume that $G_{1},\, G_{2}\in L^{2}(Q).$ There exist positive constants $\lambda_{1},\, s_{1}$  and $C_{1}$ such that, for any $s\geq s_{1}$ and $\lambda \geq \lambda_{1},$  any  $\psi_{0}\in L^{2}(\Omega),$ the corresponding solution to \eqref{adjoint-1}  satisfies

\begin{equation}\label{eq-carl-1}
\begin{array}{l}
\displaystyle I(s,\lambda;\varphi)+I(s,\lambda;\psi)\leq C_{1}\left(\iint_{Q}e^{-2s\sigma} \Big[\lambda^{4}(s\xi)^{3}|G_{1}|^{2}+|G_{2}|^{2}\Big]\, dxdt\right.\\
\noalign{\smallskip} 
	\phantom{ I(s,\lambda;\varphi)+I(s,\lambda;\psi)233Q}
\left. \displaystyle +\iint_{\omega\times (0,T)}e^{-2s\sigma}\lambda^{8}(s\xi)^{7}|\varphi|^{2}\, dxdt 
\right).
\end{array}
\end{equation}
Furthermore, $C_{1}$ and $\lambda_{1}$ only depend on $\Omega$ and $\omega$, and $s_{1}$ can be chosen of the form $s_{1}=C(T+T^{2})$, where $C$ only depends on $\Omega,\, \omega,\, a(0),\, \,\mbox{and}\,\,|A|$.
\end{proposition}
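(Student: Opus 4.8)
The plan is the by-now classical two-step scheme for cascade systems: apply the global Fursikov--Imanuvilov Carleman estimate to each of the two equations in \eqref{adjoint-1} separately, add the two inequalities, and then get rid of the local integral of $\psi$ by exploiting the fact that, on $\mathcal{O}\supset\omega_{0}$, $\psi$ is obtained by applying the $\varphi$-operator to $\varphi$.

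First I would recall that for a scalar equation $\pm z_{t}-a(0)\Delta z+Az=F$ in $Q$ with $z=0$ on $\Sigma$, the standard Carleman inequality gives, for $\lambda\geq\lambda_{1}$ and $s\geq C(T+T^{2})$,
\[
I(s,\lambda;z)\leq C\left(\iint_{Q}e^{-2s\sigma}|F|^{2}\,dxdt+\iint_{\omega_{0}\times(0,T)}e^{-2s\sigma}\lambda^{8}(s\xi)^{7}|z|^{2}\,dxdt\right),
\]
the threshold on $s$ being forced by the absorption of the zero-order term $Az$ and by the rescaling by $a(0)$ — this is exactly where the stated form $s_{1}=C(T+T^{2})$ and the dependence of $C$ on $a(0),|A|$ come from, while $C$ and $\lambda_{1}$ here are purely geometric. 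I apply this to $\psi$ (with $F=G_{2}$) and to $\varphi$ (with $F=\psi\chi_{\mathcal{O}}+G_{1}$) and add the two inequalities, obtaining
\[
I(s,\lambda;\varphi)+I(s,\lambda;\psi)\leq C\iint_{Q}e^{-2s\sigma}\big(|\psi|^{2}\chi_{\mathcal{O}}+|G_{1}|^{2}+|G_{2}|^{2}\big)\,dxdt+C\iint_{\omega_{0}\times(0,T)}e^{-2s\sigma}\lambda^{8}(s\xi)^{7}\big(|\varphi|^{2}+|\psi|^{2}\big)\,dxdt.
\]
Since $s\xi$ is bounded below on $Q$ by $c\,s\,e^{2\lambda\|\eta^{0}\|_{\infty}}/T^{2}$, for $s\geq C(T+T^{2})$ one has $C\leq\tfrac{1}{2}\lambda^{4}(s\xi)^{3}$ pointwise, so that $C\iint_{Q}e^{-2s\sigma}|\psi|^{2}\chi_{\mathcal{O}}\leq\tfrac{1}{2}I(s,\lambda;\psi)$ is absorbed into the left-hand side. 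The remaining difficulty is the local term in $\psi$.

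The heart of the argument is to bound $\iint_{\omega_{0}\times(0,T)}e^{-2s\sigma}\lambda^{8}(s\xi)^{7}|\psi|^{2}$ by a small multiple of $I(s,\lambda;\varphi)+I(s,\lambda;\psi)$ plus a purely local integral of $|\varphi|^{2}$. Pick open sets with $\overline{\omega_{0}}\subset\omega_{1}$, $\overline{\omega_{1}}\subset\omega\cap\mathcal{O}$ and a cut-off $\theta\in C_{c}^{\infty}(\omega_{1})$ with $\theta\equiv1$ on $\omega_{0}$. Using that $\psi=-\varphi_{t}-a(0)\Delta\varphi+A\varphi-G_{1}$ on $\omega_{1}\subset\mathcal{O}$, I would write
\[
\iint_{\omega_{0}\times(0,T)}e^{-2s\sigma}\lambda^{8}(s\xi)^{7}|\psi|^{2}\,dxdt\leq\iint_{\omega_{1}\times(0,T)}\theta\,e^{-2s\sigma}\lambda^{8}(s\xi)^{7}\,\psi\,\big(-\varphi_{t}-a(0)\Delta\varphi+A\varphi-G_{1}\big)\,dxdt,
\]
and integrate by parts, moving the time derivative and the two spatial derivatives off $\varphi$ onto the smooth weight $\theta\,e^{-2s\sigma}\lambda^{8}(s\xi)^{7}$ and onto $\psi$; the $t$-boundary terms vanish because $e^{-2s\sigma}$ and all its $t$-derivatives decay faster than any power of $1/(t(T-t))$ blows up as $t\to0^{+},T^{-}$, and there are no $\Sigma$-boundary terms since $\theta$ is compactly supported in $\Omega$. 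Each time $\psi_{t}$ or $\Delta\psi$ is produced it is replaced through the $\psi$-equation by $a(0)\Delta\psi-A\psi+G_{2}$ (resp. $a(0)^{-1}(\psi_{t}+A\psi-G_{2})$), and each time a term $\varphi_{t}\psi$ reappears the $\varphi$-equation is used again; the $|\psi|^{2}$-contributions over $\omega_{1}$ then close up with coefficients that, after taking $s$ large, combine with the left-hand side, while every genuinely mixed term is handled by Young's inequality in the form $\varepsilon\,I(s,\lambda;\varphi)+\varepsilon\,I(s,\lambda;\psi)$ plus $C_{\varepsilon}$ times a local integral $\iint_{\omega_{1}\times(0,T)}e^{-2s\sigma}(s\xi)^{k}|\varphi|^{2}$. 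Choosing $\varepsilon$ small absorbs all global contributions into the left-hand side; the surviving local $|\varphi|^{2}$-integral is then dominated, at the price of enlarging $\omega_{1}$ to $\omega$, by $C\iint_{\omega\times(0,T)}e^{-2s\sigma}\lambda^{8}(s\xi)^{7}|\varphi|^{2}$, and the terms carrying $G_{i}$ are likewise split by Young into absorbable global pieces and into $C\iint_{Q}e^{-2s\sigma}(\lambda^{4}(s\xi)^{3}|G_{1}|^{2}+|G_{2}|^{2})$. Collecting everything gives \eqref{eq-carl-1}.

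The main obstacle is precisely this last integration-by-parts estimate: one must track carefully the powers of $s$, $\lambda$ and $T$ created every time a derivative falls on the weight (a space derivative costs a factor of order $\lambda s\xi$, a time derivative one of order $sT\xi^{2}$, up to lower-order terms), so as to guarantee simultaneously that every global term produced is absorbable by $\varepsilon(I(\varphi)+I(\psi))$ for $s,\lambda$ large, that the threshold $s_{1}$ retains the form $C(T+T^{2})$, and that $C_{1},\lambda_{1}$ remain independent of $a(0)$ and $|A|$. Everything else — the scalar Carleman inequality itself and the absorption of the lower-order global terms — is routine.
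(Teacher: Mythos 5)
Your proposal follows essentially the same route as the paper: apply the standard Fursikov--Imanuvilov Carleman estimates to the two equations of \eqref{adjoint-1} to get local observation terms in both $\varphi$ and $\psi$ over $\omega_{0}\times(0,T)$, then eliminate the local $\psi$-term by substituting $\psi=-\varphi_{t}-a(0)\Delta\varphi+A\varphi-G_{1}$ (valid on a cut-off supported in $\omega\cap\mathcal{O}$), integrating by parts, and splitting via Young's inequality into $\epsilon\,I(s,\lambda;\psi)$ plus $C_{\epsilon}\iint_{\omega\times(0,T)}e^{-2s\sigma}\lambda^{8}(s\xi)^{7}|\varphi|^{2}$. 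Your treatment is in fact slightly more careful than the paper's on the support of the cut-off (you require it inside $\omega\cap\mathcal{O}$, which is what the substitution actually needs), but the argument is the same.
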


\begin{proof}
Obviously, it will be sufficient to show that the there exist $\lambda_{1}$ and $s_{1}$ such that, for any  small $\epsilon>0,$ any $s\geq s_{1}$ and any $\lambda \geq \lambda_{1},$ one has
\begin{equation}\label{eq-carl-p1}
\begin{array}{l}
\displaystyle I(s,\lambda;\varphi)+I(s,\lambda;\psi)\leq \epsilon I(s,\lambda;\psi)+C_{\epsilon}\left(\iint_{Q}e^{-2s\sigma} \Big[\lambda^{4}(s\xi)^{3}|G_{1}|^{2}+|G_{2}|^{2} \Big]\, dxdt\right.\\

\noalign{\smallskip}\phantom{QQQQQqqDDDDGG}
\displaystyle \left.+\iint_{\omega\times (0,T)}e^{-2s\sigma}\lambda^{8}(s\xi)^{7}|\varphi|^{2}\, dxdt
\right).
\end{array}
\end{equation}
From the usual Carleman inequalities (see \cite{Tere-B, Clark2}) in  \eqref{adjoint-1}, for  $s\geq \sigma_{1}(T+T^{2})$ and $\lambda\geq \lambda_{0},$ we have
\begin{equation}\label{eq-carl-p1-1}
\begin{array}{l}
\displaystyle I(s,\lambda;\varphi)+I(s,\lambda;\psi)\leq C\left(\iint_{Q}e^{-2s\sigma} \Big[|G_{1}|^{2}+|G_{2}|^{2} \Big]\, dxdt\right. \\
\noalign{\smallskip}\phantom{QQQQQqqDDDDGG}
\displaystyle \left.+\iint_{\omega_{0} \times(0,T)}e^{-2s\sigma}\lambda^{4}(s\xi)^{3} \Big[|\varphi|^{2}+|\psi|^{2} \Big]\, dxdt\right),
\end{array}
\end{equation}
from the last inequality we will obtain the inequality \eqref{eq-carl-p1}, this purpose we first let us introduce  a function  $\eta\in C^{\infty}_{0}(\omega)$ satisfying $0\leq \eta \leq 1$ and $\eta=1$ in $\omega_{0}.$ Then
$$
\begin{array}{l}
\displaystyle \iint_{\omega_{0}\times (0,T) }e^{-2s\sigma}\lambda^{4}(s\xi)^{3}|\psi|^{2}\, dxdt\leq \iint_{\omega \times (0,T)}e^{-2s\sigma}\lambda^{4}(s\xi)^{3}\eta |\psi|^{2}\, dxdt\\
\noalign{\smallskip}\phantom{QQQQQQQQQQQQQAAADD}
\displaystyle = \iint_{\omega \times(0,T)}e^{-2s\sigma}\lambda^{4}(s\xi)^{3}\eta \psi (-\varphi_{t}-a(0)\Delta \varphi +A \varphi -G_{1})\, dxdt\\
\noalign{\smallskip}\phantom{QQQQQQQQQQQQQAAADD} := M_{1}+M_{2}+M_{3}+M_{4}.
\end{array}
$$
Here, using the properties of function $\alpha$ and $\xi$. We get that
$$M_{i}\leq \epsilon  I(s,\lambda; \psi)+C_{\epsilon}\iint_{\omega \times (0,T)}e^{-2s\sigma}\lambda^{8}(s\xi)^{7}|\varphi|^{2}\, dxdt. $$
By these estimates and \eqref{eq-carl-p1-1}, we obtain the inequality \eqref{eq-carl-p1} and therefore this end the  proof.
\end{proof}

Now, we will prove a Carleman estimate with functions blowing up only  at $t=0;$ which  allows us to prove an observability inequality for the system \eqref{adjoint-1}.

Define the new weight functions
$$
\displaystyle \overline{\sigma}(x,t)=\frac{e^{\lambda \|\eta^{0}\|_{\infty}} - e^{\lambda(2\|\eta^{0}\|_{\infty} + \eta^{0}(x))}}{\ell(t)},\,\, \ \ \overline{\xi}(x,t)=\frac{e^{\lambda(2\|\eta^{0}\|_{\infty}+\eta^{0}(x))}}{\ell(t)},
$$
$$
\displaystyle \sigma^{\ast}(t)=\max_{x\in \overline{\Omega}}\overline{\sigma}(x,t),\ \ \ \  \xi^{\ast}(t)=\min_{x\in \overline{\Omega}}\overline{\xi}(x,t),
$$
$$
\displaystyle \hat{\sigma}(t)=\min_{x\in \overline{\Omega}}\overline{\sigma}(x,t),\ \ \ \ \hat{\xi}(t)=\max_{x\in \overline{\Omega}}\overline{\xi}(x,t),
$$
where the function $\ell$ is given by 
$$
\ell(t)= \begin{cases}
			t(T-t)\,\, &0\leq t\leq \dfrac{T}{2}, \\
\ \ \ \ \ \dfrac{T^{2}}{4}\,\, &\dfrac{T}{2} < t \leq T.
		 \end{cases}
$$
Notice that $\overline{\sigma}=\sigma$ and $\overline{\xi}=\xi$ in $\left(0,\frac{T}{2} \right)$ and let us introduce the notation
$$
\overline{I}(s,\lambda;\phi):=\iint_{Q}e^{-2s\overline{\sigma}} \Big[(s\overline{\xi})^{-1}(|\phi_{t}|^{2}+|\Delta \phi|^{2})+\lambda^{2}s\overline{\xi }|\nabla \phi|^{2}+\lambda^{4}(s\overline{\xi})^{3}|\phi|^{2} \Big]\, dxdt.
$$
One has the following:
\begin{proposition}\label{carl-2}
Let us assume that $G_{1},\, G_{2}\in L^{2}(Q).$ There exist positive constants $\lambda_{2},\, s_{2}$ such that, for any $s\geq s_{2}$ and $\lambda \geq \lambda_{2},$ there exists a  constant $C_{2}=C_{2}(s,\lambda)>0$ with  the following  property: for any $\psi_{0}\in L^{2}(\Omega),$  the associated solution to \eqref{adjoint-1} satisfies  
\begin{equation}\label{eq-carl-2}
\begin{array}{l}
\displaystyle \overline{I}(s,\lambda;\varphi)+\overline{I}(s,\lambda;\psi)\leq C_{2}\left(\iint_{Q}e^{-2s\overline{\sigma}} \Big[\lambda^{4}(s\overline{\xi})^{3}|G_{1}|^{2}+|G_{2}|^{2} \Big]\, dxdt\right.\\
\noalign{\smallskip} 
	\phantom{ I(s,\lambda;\varphi)+I(s,\lambda;\psi)233Q}
\left. \displaystyle +\iint_{\omega\times (0,T)}e^{-2s\overline{\sigma}}\lambda^{8}(s\overline{\xi})^{7}|\varphi|^{2}\, dxdt 
\right).
\end{array}
\end{equation}
Furthermore, $\lambda_{2}$ and $s_{2}$ only depend on $\Omega,\, \omega,\, T,\, a(0) \, \,\mbox{and}\,\,|A| $.
\end{proposition}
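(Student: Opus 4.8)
The plan is to upgrade the two-sided Carleman inequality of Proposition~\ref{carl-1} (weights singular at both $t=0$ and $t=T$) to one whose weights are singular only at $t=0$, by gluing it to parabolic regularity estimates on the time interval where the barred weights $\overline{\sigma},\overline{\xi}$ are bounded. Throughout, $s\ge s_{1}$ and $\lambda\ge\lambda_{1}$ are frozen at the values furnished by Proposition~\ref{carl-1}; the constants below may depend on $s,\lambda$ (this accounts for the fact that $C_{2}=C_{2}(s,\lambda)$), and all their dependencies on $\Omega,\omega,T,a(0),|A|$ are inherited from Proposition~\ref{carl-1} and from the (time-global) maximal $L^{2}$-parabolic regularity constant for the heat operator with a bounded zeroth-order term; in particular one may take $s_{2}=s_{1}$, $\lambda_{2}=\lambda_{1}$.

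First I would transport the estimate onto $Q_{1/2}:=\Omega\times(0,T/2)$. Since $\overline{\sigma}=\sigma$ and $\overline{\xi}=\xi$ there, discarding the nonnegative part of the left-hand side of \eqref{eq-carl-1} over $\Omega\times(T/2,T)$ yields control of $\overline{I}(s,\lambda;\varphi)$ and $\overline{I}(s,\lambda;\psi)$ restricted to $Q_{1/2}$. On the right-hand side the two families of weights coincide on $Q_{1/2}$, whereas on $\Omega\times(T/2,T)$ one has $\sigma\ge c_{0}/(T-t)>0$ with $\overline{\sigma}$ bounded, so that $e^{-2s\sigma}(s\xi)^{k}\le C(s,\lambda)\,e^{-2s\overline{\sigma}}(s\overline{\xi})^{k}$ for $k\in\{0,3,7\}$, because the left-hand factor tends to $0$ as $t\to T^{-}$ while $e^{-2s\overline{\sigma}}(s\overline{\xi})^{k}$ stays bounded below there. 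Hence, writing $\overline{I}_{Q_{1/2}}$ for $\overline{I}$ with the space-time integral taken only over $Q_{1/2}$,
\[
\overline{I}_{Q_{1/2}}(\varphi)+\overline{I}_{Q_{1/2}}(\psi)\le C(s,\lambda)\,\mathcal{R},\qquad
\mathcal{R}:=\iint_{Q}e^{-2s\overline{\sigma}}\big[\lambda^{4}(s\overline{\xi})^{3}|G_{1}|^{2}+|G_{2}|^{2}\big]\,dxdt+\iint_{\omega\times(0,T)}e^{-2s\overline{\sigma}}\lambda^{8}(s\overline{\xi})^{7}|\varphi|^{2}\,dxdt.
\]

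Next I would estimate $\overline{I}(\varphi)+\overline{I}(\psi)$ over the remaining piece $\Omega\times(T/2,T)$. On $\Omega\times(T/4,T)$ the function $\overline{\sigma}$ (hence $e^{-2s\overline{\sigma}}$) and the factors $(s\overline{\xi})^{-1},\lambda^{2}s\overline{\xi},\lambda^{4}(s\overline{\xi})^{3}$ are bounded above and below by positive constants depending only on $s,\lambda,\Omega,\omega,T$, so it suffices to bound $\|\varphi\|^{2}_{L^{2}(T/2,T;H^{2}\cap H^{1}_{0})}+\|\varphi_{t}\|^{2}_{L^{2}(\Omega\times(T/2,T))}$ and the analogous norms of $\psi$ by $C(s,\lambda)\mathcal{R}$. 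Choose $\theta\in C^{\infty}([0,T])$ with $0\le\theta\le1$, $\theta\equiv0$ on $[0,T/4]$ and $\theta\equiv1$ on $[T/2,T]$, so $\operatorname{supp}\theta'\subset(T/4,T/2)\subset Q_{1/2}$. Then $\theta\psi$ solves the forward problem with zero initial datum and right-hand side $\theta G_{2}+\theta'\psi$, while $\theta\varphi$ solves the backward problem with zero final datum and right-hand side $\theta(\psi\chi_{\mathcal{O}}+G_{1})-\theta'\varphi$. Maximal $L^{2}$-parabolic regularity (time-reversed for $\theta\varphi$) bounds $\|\theta\psi\|^{2}_{L^{2}(0,T;H^{2}\cap H^{1}_{0})}+\|(\theta\psi)_{t}\|^{2}_{L^{2}(Q)}$ by $\|\theta G_{2}\|^{2}_{L^{2}(Q)}+\|\theta'\psi\|^{2}_{L^{2}(Q)}$, and likewise $\|\theta\varphi\|^{2}_{L^{2}(0,T;H^{2}\cap H^{1}_{0})}+\|(\theta\varphi)_{t}\|^{2}_{L^{2}(Q)}$ by $\|\theta(\psi\chi_{\mathcal{O}}+G_{1})\|^{2}_{L^{2}(Q)}+\|\theta'\varphi\|^{2}_{L^{2}(Q)}$. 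Since $\operatorname{supp}\theta\subset\Omega\times(T/4,T)$, where $e^{-2s\overline{\sigma}}$ and $e^{-2s\overline{\sigma}}\lambda^{4}(s\overline{\xi})^{3}$ are bounded below, the terms $\|\theta G_{1}\|^{2},\|\theta G_{2}\|^{2}$ and $\|\theta\psi\chi_{\mathcal{O}}\|^{2}$ (the last using the $\psi$-estimate just obtained) are all $\le C(s,\lambda)\mathcal{R}$; since $\operatorname{supp}\theta'\subset Q_{1/2}$, the terms $\|\theta'\psi\|^{2},\|\theta'\varphi\|^{2}$ are $\le C(s,\lambda)\big(\overline{I}_{Q_{1/2}}(\psi)+\overline{I}_{Q_{1/2}}(\varphi)\big)\le C(s,\lambda)\mathcal{R}$ by the previous step. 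As $\theta\equiv1$ on $[T/2,T]$, this supplies the missing bound on $\Omega\times(T/2,T)$, and adding it to the previous step yields \eqref{eq-carl-2}.

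The main obstacle is organizational rather than analytic: one must keep precise track of which weight factor is bounded and which one blows up on each of the regions $\Omega\times(0,T/2)$, $\Omega\times(T/4,T/2)$ and $\Omega\times(T/2,T)$, and choose $\theta$ so that the newly created sources $\theta G_{i}$ and $\theta\psi\chi_{\mathcal{O}}$ sit where the barred weights are comparable to positive constants (hence are absorbed into $\mathcal{R}$), while the commutator terms $\theta'\varphi,\theta'\psi$ sit inside $Q_{1/2}$ (hence are absorbed by the estimate already proved in the first step). A secondary point is to apply parabolic regularity in exactly the norm that dominates the integrand of $\overline{I}$, namely $\phi,\nabla\phi,\Delta\phi,\phi_{t}\in L^{2}$; this is precisely maximal $L^{2}$-regularity for $\pm\partial_{t}-a(0)\Delta+A$ on the smooth bounded domain $\Omega$, and it is available here because $\theta\psi$ and $\theta\varphi$ have vanishing initial, respectively final, data.
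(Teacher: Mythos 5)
Your proof is correct and follows essentially the same route as the paper: split the integrals at $t=T/2$, control the piece on $(0,T/2)$ by Proposition~\ref{carl-1} (where the barred and unbarred weights coincide), and control the piece on $(T/2,T)$ via a cutoff $\theta$ vanishing on $[0,T/4]$, with the commutator terms $\theta'\varphi,\theta'\psi$ supported in $(T/4,T/2)$ and absorbed by the first step. The only cosmetic difference is that you invoke maximal $L^{2}$-parabolic regularity where the paper carries out the corresponding energy estimates by hand; you are also somewhat more explicit than the paper about the comparison of the two weight families on $\Omega\times(T/2,T)$, which is a point worth keeping.
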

\begin{proof}
We can decompose all the integral in $\overline{I}(s,\lambda; \varphi)$  in the form $\displaystyle \iint_{Q}=\iint_{\Omega \times \left(0,\frac{T}{2} \right)}+\iint_{\Omega\times \left(\frac{T}{2},T \right)}$. 

Let us  gather together all the integrals in $\Omega \times \left(0,\frac{T}{2} \right)$\ (resp. $\Omega \times \left(\frac{T}{2},T \right)$) in $\overline{I}_{1}(s,\lambda; \varphi)$\  (resp. $\overline{I}_{2}(s,\lambda;\varphi)$). Then $\displaystyle \overline{I}(s,\lambda;\varphi)=\overline{I}_{1}(s,\lambda;\varphi)+\overline{I}_{2}(s,\lambda;\varphi)$ and a similar decomposition holds for $\overline{I}(s,\lambda;\psi).$

From Carleman inequality in Proposition \ref{carl-1}, with $s\geq s_{1}$ and $\lambda\geq \lambda_{1}$, we have
\begin{equation}\label{eta-0}
\begin{array}{l}
\overline{I}_{1}(s,\lambda;\varphi)+\overline{I}_{1}(s,\lambda;\psi)\leq C (\overline{RS}),
\end{array}
\end{equation}
where $(\overline{RS})$ is the right side in \eqref{eq-carl-2}.

In order to prove that $\displaystyle \overline{I}_{2}(s,\lambda;\varphi)+\overline{I}_{2}(s,\lambda;\psi)\leq C (\overline{RS}),$ let us define the function  $\eta \in C^{2}\left([0,T]\right)$ such that
$$
\eta(t)= \begin{cases}
			0,  &t \in \left[0,\dfrac{T}{4} \right],\\
			0 < \eta(t) < 1 &t \in \left( \dfrac{T}{4}, \dfrac{T}{2} \right),\\
			1,  &t \in \left[\dfrac{T}{2},T \right].
		 \end{cases}
$$
Then, if $(\varphi,\psi)$ is solution  of \eqref{adjoint-1}, it is not difficult  to see that $(\eta\varphi, \eta\psi)$ satisfies the system
\begin{equation}\label{eta-adjoint}
\left\{\begin{array}{lll}
-(\eta \varphi)_{t}-a(0)\Delta(\eta \varphi)+A(\eta\varphi)=(\eta \psi)\chi_{\mathcal{O}}+(\eta G_{1})-\eta_{t}\varphi & \mbox{in} &Q,\\
 	\noalign{\smallskip}
(\eta\psi)_{t}-a(0)\Delta (\eta \psi)+A(\eta \psi)=(\eta G_{2})+\eta_{t}\psi & \mbox{in} & Q,\\
 	\noalign{\smallskip}
\eta\varphi=0,\, \eta\psi=0  & \mbox{on}& \Sigma,\\
 	\noalign{\smallskip}
\eta \varphi(T)=0,\,\eta  \psi(0)=0 & \mbox{in}&\Omega.
\end{array}\right.
\end{equation}
The classical energy estimates for the equation $\eqref{eta-adjoint}_{2}$, we have
\begin{equation}\label{eta-1}
\int_{\Omega}|\eta \psi|^{2}\,dx+\iint_{Q}|\eta \nabla\psi|^{2}\, dxdt\leq C\left(\iint_{Q}|\eta G_{2}|^{2}\, dxdt+\iint_{Q}|\eta_{t}\psi|^{2}\, dxdt \right).
\end{equation}
Multiplying by $\eta \varphi$ in the first equation in \eqref{eta-adjoint}, integrating  in $Q$
$$
\iint_{Q}|\eta \varphi|^{2}\, dxdt+\iint_{Q}|\eta \nabla \varphi|^{2}\, dxdt\leq C \left(\iint_{Q}|\eta G_{1}|^{2}\, dxdt+\iint_{Q}|\eta_{t}\varphi|^{2}\, dxdt \right)+\epsilon \iint_{Q}|\eta \psi|^{2}\,dxdt.
$$
For $\epsilon>0$ sufficiently small and from this last inequality join with \eqref{eta-1}, we have
\begin{equation}\label{eta-2}
\begin{array}{l}
\displaystyle \iint_{Q}|\eta|^{2} \Big[|\varphi|^{2}+|\psi|^{2}+|\nabla\varphi|^{2}+|\nabla\psi|^{2}\Big]\, dxdt\\
 	\noalign{\smallskip}\phantom{QQWWWQ}
\displaystyle \leq C\left(\iint_{Q}|\eta|^{2}\Big[|G_{1}|^{2}+|G_{2}|^{2}\Big]\, dxdt+ \iint_{Q}|\eta_{t}|^{2} \Big[|\varphi|^{2}+|\psi|^{2} \Big]\, dxdt\right).
\end{array}
\end{equation}
Proceeding similarly, we have
\begin{equation}\label{eta-3}
\begin{array}{l}
\displaystyle \iint_{Q}\left(|\eta \varphi_{t}|^{2}+|\eta\psi_{t}|^{2}+|\eta\Delta \varphi|^{2}+|\eta\Delta\psi|^{2}\right)\, dxdt\\
 	\noalign{\smallskip}\phantom{QQgggg}
\displaystyle \leq C\left( \iint_{Q}|\eta|^{2}\Big[|G_{1}|^{2}+|G_{2}|^{2}\Big]\,dxdt \right.\\
 	\noalign{\smallskip}\phantom{QQWWWQ}
\displaystyle \left.+\iint_{Q}|\eta_{t}|^{2} \Big[|\Delta\varphi|^{2}+|\Delta\psi|^{2}+|\varphi_{t}|^{2}+|\psi_{t}|^{2}\Big]\, dxdt
\right).
\end{array}
\end{equation}
The equations \eqref{eta-1}-\eqref{eta-3} and the properties of $\eta$, we deduce that
$$
\begin{array}{l}
\overline{I}_{2}(s,\lambda;\varphi)+\overline{I}_{2}(s,\lambda;\psi)\\
 	\noalign{\smallskip}\phantom{QQWWQ}
\displaystyle \leq C \left(\iint_{\Omega \times \left(\frac{T}{4},\frac{T}{2} \right)} \Big[|\Delta \varphi|^{2}+|\Delta \psi|^{2}+|\varphi_{t}|^{2}+|\psi_{t}|^{2}\Big]\, dxdt\right. \\
 	\noalign{\smallskip}\phantom{QQWWEEEW}
\displaystyle \left. + \iint_{\Omega \times \left(\frac{T}{2},T \right)}\Big[|G_{1}|^{2}+|G_{2}|^{2}\Big]\, dxdt \right).
\end{array}
$$
Consequently, 
$$\displaystyle \overline{I}_{2}(s,\lambda;\varphi)+\overline{I}_{2}(s,\lambda;\psi) \leq C(\overline{RS}).$$
 This last together  with \eqref{eta-0} allow us obtain the inequality \eqref{eq-carl-2}.

 This ends the proof.
\end{proof}

\begin{remark}\label{d-remark-1}
If $\lambda > \dfrac{1}{\|\eta^{0}\|_{\infty}}$ is sufficiently large and denoting by   
$$\displaystyle \beta(x,t)=\frac{2}{5}\overline{\sigma}(x,t),\,\,  \displaystyle \beta^{\ast}(t)=\max_{x\in \overline{\Omega}}\beta(x,t)\, \ \mbox{ and }\,\, \hat{\beta}(t)=\min_{x\in \overline{\Omega}}\beta(x,t),$$ 
we have
$$
\hat{\beta}(t)\leq \beta(x,t)\leq \frac{5}{4}\hat{\beta}(t)\,\,\, \mbox{ and }\,\,\, \frac{4}{5}\beta^{\ast}(t)\leq \beta(x,t)\leq \beta^{\ast}(t).
$$
 Then, due to Proposition \ref{carl-2} we have

\begin{equation}\label{id-remark-1}
\begin{array}{l}
\displaystyle I_{\ast}(s,\lambda;\varphi)+I_{\ast}(s,\lambda;\psi)\leq C\left(\iint_{Q}e^{-4s\beta^{\ast}}\left[\lambda^{4}(s\hat{\xi})^{3}|G_{1}|^{2}+|G_{2}|^{2}\right]\, dxdt\right.\\
\noalign{\smallskip} 
	\phantom{ I(s,\lambda;\varphi)+I(s,\lambda;\psi)233Qrt}
\left. \displaystyle +\iint_{\omega\times (0,T)}e^{-4s\beta^{\ast}}\lambda^{8}(s\hat{\xi})^{7}|\varphi|^{2}\, dxdt 
\right),
\end{array}
\end{equation}
where 
$$
I_{\ast}(s,\lambda; \varphi)=\iint_{Q}e^{-5s\beta^{\ast}}\left[(s\hat{\xi})^{-1}(|\phi_{t}|^{2}+|\Delta \phi|^{2})+\lambda^{2}s\xi^{\ast}|\nabla \phi|^{2}+\lambda^{4}(s\xi^{\ast})^{3}|\phi|^{2}\right]\, dxdt.
$$
\end{remark}

\begin{remark}\label{remark-1}
The new weight functions $\tilde{\sigma}(x,t)$ and $\tilde{\xi}(x,t)$ will not be really necessary, because due to the exponential blow-up of the weight functions $\sigma(x,t)$ and $\xi(x,t)$ we will prove that $h(x,0)=y(x,T)=0$, furthermore, $h(x,T)=y(x,0)=0$, but this are the initial condition (in this case there is not a contradiction respect to the initial values), then it is possible to assume the usual weight functions $\sigma(x,t)$ and $\xi(x,t)$.
\end{remark}

\begin{remark}\label{remark-1}
It will not be really necessary employ the weight functions $\sigma^*(t)$, $\hat{\sigma}(t)$, $\xi^*(t)$ and $\hat{\xi}(t)$, we can apply the usual weight functions $\sigma(x,t)$ and $\xi(x,t)$ to conclude the desire result, but in this case, the estimates will be  tedious and bigger, so, for simplicity we will take the maximum and minimum of this weight functions with respect to space variable.

\end{remark}

\subsection{Null controllability for the linear system \eqref{linear-1}}

Assume that the  notations and hypothesis of Remark \ref{d-remark-1} hold. Then, in order to simplify the notation, we will denote by
\begin{equation}
\label{weight-1}
\begin{array}{c}
\rho_0 := e^{2s\beta^{\ast}}{\xi^{\ast}}^{-\frac{3}{2}}, \ \ \rho_1 := e^{2s\beta^{\ast}} {\xi^{\ast}}^{-\frac{7}{2}}, \ \ \rho:=e^{\frac{5}{2}s\beta^{\ast}},\\
\hat{\rho}_{0}:=e^{\frac{3}{2}s\beta^{\ast}}{\xi^{\ast}}^{-\frac{9}{2}}, \  \  \hat{\rho}_{k}:=e^{\frac{3}{2}s\beta^{\ast}}{\xi^{\ast}}^{-\frac{15+2k}{2}}, \, k \in \mathbb{N}.
\end{array}
\end{equation}
Thanks to Proposition \ref{carl-2}, we will be able to prove the null controllability of \eqref{linear-1} for right-hand $f$ and $g$ that  decay sufficiently fast to zero as $t\to 0^{+}.$ Indeed, one has the following:
\begin{proposition}\label{p-null-c-1}
Assume the hypothesis of Proposition \ref{carl-2} and let $g_1, g_2$ satisfy $\rho g_1\, \rho_{0} g_2\in L^{2}(Q).$ Then,  the  system \eqref{linear-1}  is  null-controllable at  time $t=0$. More  precisely, there exists $u\in L^{2}(\omega\times (0,T))$ with  $\mathrm{supp}\, u\subset \omega\times[0,T]$  such that, if $(y,h)$ is the solution  of \eqref{linear-1}, one has:
\begin{description}
\item[a)]

\begin{equation}\label{eq-null-1}
	\begin{array}{c} 
		\displaystyle \iint_{\omega\times(0,T)}\Big[\hat{\rho}^{2}_{1}|u_{t}|^{2}+\hat{\rho}^{2}_{1}|\Delta u|^{2}+\hat{\rho}^{2}_{1}|u|^{2}\Big]\,dxdt\leq C\iint_{Q}\Big[\rho^{2}|g_1|^{2}+\rho_0^2 |g_2|^2\Big]\,dxdt,\\
		\noalign{\smallskip}
		\displaystyle \iint_{Q}\left(\rho^{2}_{0}|y|^{2}+\rho^{2}_{0}|h|^{2}\right)\,dxdt<+\infty,\,\\
	\end{array}
\end{equation}

	\item[b)] 
\begin{equation}
\label{eq-null-2}
\begin{array}{l}
\displaystyle \underset{t\in [0,T]}{\sup} \left(\hat{\rho}_{0}^2(t)\int_{\Omega} |y(t)|^{2}\, dx \right) +\underset{t\in [0,T]}{\sup} \left(\hat{\rho}_{0}^2(t)\int_{\Omega}| h(t)|^{2}\, dx \right)\\
\noalign{\smallskip} 
	\displaystyle + \iint_{Q}\hat{\rho}^{2}_{0}(|\nabla y|^{2}+|\nabla h|^{2})\ dxdt \leq C \left(\iint_Q \left(\rho^2 |y|^2+ \rho_0^2 |h|^2\right)\,dx\,dt\right. \\
\noalign{\smallskip} 
	\phantom{(\nabla)}
  \displaystyle +\iint_{Q}\left(\rho^{2}|g_1|^{2}+\rho_0^2 |g_2|^2\right)\,dx\,dt + \left. \iint_{\omega\times(0,T)} \rho_1^2 |u|^2 \,dx\,dt\right),
\end{array}
\end{equation}

\item[c)]
\begin{equation}\label{est-2}
\begin{array}{l}
\displaystyle \underset{t\in [0,T]}{\sup} \left(\hat{\rho}_{1}^2(t) \int_{\Omega}|\nabla y(t)|^2\, dx\right) +\underset{t\in [0,T]}{\sup} \left(\hat{\rho}_{1}^2(t) \int_{\Omega}|\nabla h(t)|^2\, dx\right)\\
\noalign{\smallskip} 
\displaystyle
+\iint_{Q}\hat{\rho}_{1}^2(| \Delta y|^2+|\Delta h|^{2})\, dxdt+\int_{Q}\hat{\rho}^{2}_{1}(|y_{t}|^{2}+|h_{t}|^{2})\,dx\,dt\\
\noalign{\smallskip} 
	\phantom{E}
\displaystyle \leq C \left(\iint_Q\left( \rho^2 |y|^2+ \rho_0^2 |h|^2\right)\,dx\,dt+ \iint_{\omega\times(0,T)} \rho_1^2 |u|^2 \,dx\,dt \right. \\
\noalign{\smallskip} 
	\phantom{(\nabla)QWERR}
\displaystyle
\left.  +\iint_{Q}\left(\rho^{2}|g_1|^{2}+\rho_0^2 |g_2|^2\right)\,dx\,dt \right).
\end{array}
\end{equation}
\end{description}

\end{proposition}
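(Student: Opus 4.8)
The plan is to prove Proposition~\ref{p-null-c-1} by a Fursikov--Imanuvilov duality argument driven by the weighted Carleman inequality of Remark~\ref{d-remark-1}, and then to pass from the basic weighted bound obtained this way to the finer estimates (b)--(c) via weighted energy estimates on \eqref{linear-1}.

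\textbf{Step 1: the variational problem and the control.} Write $L_1^{\ast}\varphi:=-\varphi_t-a(0)\Delta\varphi+A\varphi$ and $L_2^{\ast}\psi:=\psi_t-a(0)\Delta\psi+A\psi$ for the two operators of \eqref{adjoint-1}, and put $G_1(\varphi,\psi):=L_1^{\ast}\varphi-\psi\chi_{\mathcal O}$, $G_2(\psi):=L_2^{\ast}\psi$. I would work in
$$
P_0:=\bigl\{(\varphi,\psi)\in C^{2}(\overline Q)^2:\ \varphi=\psi=0\ \text{on}\ \Sigma,\ \varphi(\cdot,T)=0\bigr\},
$$
equipped with the bilinear form
$$
\mathcal B\bigl((\varphi,\psi),(\varphi',\psi')\bigr):=\iint_Q\theta_1\,G_1(\varphi,\psi)G_1(\varphi',\psi')+\iint_Q\theta_2\,G_2(\psi)G_2(\psi')+\iint_{\omega\times(0,T)}\theta_3\,\varphi\,\varphi',
$$
where $\theta_1,\theta_2,\theta_3$ are fixed positive multiples of the weights $e^{-4s\beta^{\ast}}\lambda^{4}(s\hat\xi)^{3}$, $e^{-4s\beta^{\ast}}$, $e^{-4s\beta^{\ast}}\lambda^{8}(s\hat\xi)^{7}$ on the right of \eqref{id-remark-1}, the parameters $s,\lambda$ being frozen at admissible values. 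Since every $(\varphi,\psi)\in P_0$ is precisely the solution of \eqref{adjoint-1} for this source $(G_1,G_2)$ and datum $\psi_0:=\psi(\cdot,0)$, inequality \eqref{id-remark-1} gives $I_{\ast}(\varphi)+I_{\ast}(\psi)\le C\,\mathcal B((\varphi,\psi),(\varphi,\psi))$; in particular $\mathcal B$ is a scalar product on $P_0$, since $\mathcal B(p,p)=0$ forces $\varphi|_{\omega\times(0,T)}=0$ and $G_1=G_2=0$, whence $\varphi\equiv\psi\equiv0$. For data $g_1,g_2$ with $\rho g_1,\rho_0 g_2\in L^2(Q)$, the linear form $\mathcal L(\varphi,\psi):=\iint_Q g_1\varphi+\iint_Q g_2\psi$ is bounded for the norm $\mathcal B(p,p)^{1/2}$: by Cauchy--Schwarz $|\mathcal L(\varphi,\psi)|\le\|\rho g_1\|_{L^2(Q)}\|\rho^{-1}\varphi\|_{L^2(Q)}+\|\rho_0 g_2\|_{L^2(Q)}\|\rho_0^{-1}\psi\|_{L^2(Q)}$, and \eqref{id-remark-1} — combined, for the $\psi$-factor, with the parabolic energy estimate for the forward equation $L_2^{\ast}\psi=G_2$ — bounds $\|\rho^{-1}\varphi\|_{L^2(Q)}$ and $\|\rho_0^{-1}\psi\|_{L^2(Q)}$ by $C\,\mathcal B((\varphi,\psi),(\varphi,\psi))^{1/2}$. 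Completing $P_0$ to a Hilbert space $P$ and applying the Lax--Milgram theorem yields a unique $\hat p=(\hat\varphi,\hat\psi)\in P$ with $\mathcal B(\hat p,p)=\mathcal L(p)$ for all $p\in P$, and I then set
$$
y:=\theta_1\,G_1(\hat\varphi,\hat\psi),\qquad h:=\theta_2\,G_2(\hat\psi),\qquad u:=-\theta_3\,\hat\varphi\,\chi_{\omega}.
$$
Taking $p=\hat p$ and Cauchy--Schwarz give
$$
\iint_Q\bigl(\theta_1^{-1}|y|^2+\theta_2^{-1}|h|^2\bigr)+\iint_{\omega\times(0,T)}\theta_3^{-1}|u|^2\le C\iint_Q\bigl(\rho^2|g_1|^2+\rho_0^2|g_2|^2\bigr),
$$
which contains the second line of \eqref{eq-null-1} and, since $\theta_3^{-1}\ge c\,\hat\rho_1^{2}$, also the $|u|^2$-term of its first line. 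Specializing the test pair $p$: with $p$ compactly supported in $Q$ one recovers the two parabolic equations of \eqref{linear-1}; with $\varphi(\cdot,0)$ free one gets $y(\cdot,0)=0$; with $\psi(\cdot,T)$ free one gets $h(\cdot,T)=0$; and with $\psi(\cdot,0)$ free one gets $h(\cdot,0)=0$, i.e.\ null controllability of \eqref{linear-1} at $t=0$. Parabolic regularity, together with the weighted integrability just obtained, legitimizes these integrations by parts, and inserting in the observation term a cut-off supported in $\omega$ and equal to $1$ on $\omega_0$ guarantees $\mathrm{supp}\,u\subset\omega\times[0,T]$.

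\textbf{Step 2: the refined estimates (b)--(c).} For (b) I would multiply the $y$-equation of \eqref{linear-1} by $\hat\rho_0^{2}y$ and the $h$-equation by $\hat\rho_0^{2}h$, integrate over $Q$, and integrate by parts; using $y(\cdot,0)=h(\cdot,0)=h(\cdot,T)=0$ produces the left-hand side of \eqref{eq-null-2}, the remaining terms being $\iint_Q|(\hat\rho_0^{2})_t|(|y|^2+|h|^2)$ together with the sources and the coupling $y\chi_{\mathcal O}$; since the logarithmic time-derivatives of the weights are dominated by a fixed power of $\xi^{\ast}$, one has $|(\hat\rho_0^{2})_t|\le C(\rho^2+\rho_0^2)$ and Young's inequality closes the estimate. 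Part (c) is the same one order higher: multiplying the $y$-equation by $-\hat\rho_1^{2}\Delta y$ (and the $h$-equation by $-\hat\rho_1^{2}\Delta h$) controls $\sup_{t}\int_\Omega|\nabla y|^2$ and $\iint_Q\hat\rho_1^{2}|\Delta y|^2$, after which $\iint_Q\hat\rho_1^{2}|y_t|^2$ is read off the equation, the right-hand side being absorbed with the help of (b). It remains to estimate $\iint_{\omega\times(0,T)}\hat\rho_1^{2}(|u_t|^2+|\Delta u|^2)$ in \eqref{eq-null-1}: on a neighbourhood of $\overline\omega$ the function $\hat\varphi$ solves $L_1^{\ast}\hat\varphi=\hat\psi\chi_{\mathcal O}+\theta_1^{-1}y$, so interior parabolic regularity bounds $\hat\varphi_t$ and $\Delta\hat\varphi$ there in a weighted $L^2$; since $u=-\theta_3\hat\varphi$ on $\omega$ and $(\theta_3)_t,\Delta\theta_3$ are again dominated by $\theta_3$ times powers of $\xi^{\ast}$, Leibniz' rule yields the bound, a cut-off between $\omega_0$ and $\omega$ making this rigorous.

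\textbf{Main obstacle.} The hard part will be the weight bookkeeping: one must fix the relations between $\rho,\rho_0,\rho_1,\hat\rho_0,\hat\rho_1$ and the Carleman weights so that $\mathcal B$ is coercive and $\mathcal L$ bounded simultaneously — which is exactly what forces the powers of $\xi^{\ast}$ and the fractional exponents in \eqref{weight-1} — and then check, in each weighted energy estimate of Step~2, that the time-derivatives (and, for (c), the Laplacians) of the chosen weights are absorbed by the weights already present. The cascade structure adds the requirement that the weights attached to $y$ and $h$ be compatible under the coupling terms $y\chi_{\mathcal O}$ and $\psi\chi_{\mathcal O}$; once this is arranged, the remaining computations, though lengthy, are routine.
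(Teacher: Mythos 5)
Your proposal is correct and follows essentially the same route as the paper: the Fursikov--Imanuvilov variational scheme built on the Carleman estimate of Remark~\ref{d-remark-1} (Lax--Milgram on the completion of smooth adjoint pairs, control defined through a cut-off times the weighted adjoint state), followed by weighted energy multipliers $\hat\rho_0^{2}y,\ \hat\rho_0^{2}h$ for (b) and $\hat\rho_1^{2}y_t,\ \hat\rho_1^{2}\Delta y$ (equivalently your $-\hat\rho_1^{2}\Delta y$ plus reading $y_t$ from the equation) for (c). The only cosmetic difference is that for the $u_t,\Delta u$ bounds the paper applies global maximal parabolic regularity to the auxiliary state $y^{\ast}=\hat\rho_1\rho_1^{-2}\bar y$, whereas you invoke interior regularity near $\omega$; both work.
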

\begin{proof}
Let us introduce the following constrained extremal problem:
\begin{equation}\label{mini-1}
\left\{\begin{array}{l}
\displaystyle\inf \left\{ \frac{1}{2}\left(\iint_{Q}\rho^{2}_{0}(|y|^{2}+|h|^{2})\, dxdt+\iint_{\omega\times (0,T)}\rho^{2}_{1}|u|^{2}\, dxdt \right) \right\},\\
\mbox{subject to\,  } u\in L^{2}(Q),\  \mathrm{supp}\ u\subset \omega\times [0,T]\,\, \mbox{ and}\\
\left\{\begin{array}{lll}
y_{t}-a(0) \Delta y+A y=u\chi_{\omega}+g_1 &\mbox{in} & Q,\\
	\noalign{\smallskip} 
-h_{t}-a(0)\Delta h+A h=y\chi_{\mathcal{O}}+g_2 & \mbox{in} & Q,\\ 
	\noalign{\smallskip} 
y=0,\, h=0 & \mbox{on} & \Sigma,\\
	\noalign{\smallskip} 
y(x,0)=0,\, h(x,T)=0 &\mbox{in} & \Omega.
\end{array}\right.
\end{array}\right.
\end{equation}
Assume  that this problem admits a unique solution $(\hat{y},\hat{h}).$  Then, in virtue of the Lagrange's principle there exist dual variables $(\overline{y},\overline{h})$ such that:
\begin{equation}\label{mini-2}
\left\{\begin{array}{lll}
\hat{y}=\rho^{-2}_{0}\left(\bar{y}_{t}-a(0) \Delta \bar{y}+A \bar{y}-\bar{h}\chi_{\mathcal{O}} \right) &\mbox{in} & Q,\\
	\noalign{\smallskip} 
\hat{h}=\rho^{-2}_{0}\left(-\bar{h}_{t}-a(0)\Delta \bar{h}+A \bar{h}\right) & \mbox{in} & Q,\\
	\noalign{\smallskip} 
\hat{u}=-\rho^{-2}_{1} \bar{y} & \mbox{in} & \omega\times (0,T),\\
	\noalign{\smallskip} 
\hat{y}=\hat{h}=0 & \mbox{on} & \Sigma. 
\end{array}\right.
\end{equation}
Let $\chi\in C^{\infty}_{0}(\omega),\, 0 \leq \chi \leq 1,\,\, \mbox{with}\,\, \chi|_{\omega_{0}}=1$,   we set
$$
\mathcal{P}_{0} = \Big\{(y,h)\in [C^{\infty}(\overline{Q})]^{2};\, y=0,\, h=0 \,\, \mbox{on}\,\, \Sigma,\, y(0)=0,\, h(T)=0 \,\, \mbox{in}\,\, \Omega \Big\},
$$
and
\begin{equation}\label{lax-1}
\begin{array}{l}
\displaystyle a\left((\bar{y},\bar{h});(y,h)\right)=\iint_{Q}\rho^{-2}_{0}(L^{\ast}\bar{y}-\bar{h}\chi_{\mathcal{O}})(L^{\ast}y-h\chi_{\mathcal{O}})\, dxdt\\
	\noalign{\smallskip} 
	\phantom{a(TTT)}
\displaystyle + \iint_{Q}\rho^{-2}_{0}\tilde{L} \bar{h} \tilde{L} h\, dxdt+\iint_{\omega\times (0,T)}\chi^{2}\rho^{-2}_{1}\bar{y}y\, dxdt,\\
	\noalign{\smallskip} 
	\phantom{a(TTT)DDrrDD} \forall\, (y,h)\in \mathcal{P}_{0},
\end{array}
\end{equation}
where $L^* y := -y_t - a(0) \Delta y + A y$\ \ and\ \ $\tilde{L} h := h_t - a(0) \Delta h + A h$.\\
By definition \eqref{lax-1}, one can see that, if the functions $\hat{y},\, \hat{h}$ solves \eqref{mini-1}, we must have
\begin{equation}\label{mini-3}
a\left((\bar{y},\bar{h});(y,h)\right)=\mathcal{G}(y,h), \ \, \forall \,  (y,h)\in \mathcal{P}_{0} ,
\end{equation}
where
\begin{equation}\label{mini-4}
\mathcal{G}(y,h)=\iint_{Q}g_{1}y\, dxdt+\iint_{Q}g_{2}h\, dxdt.
\end{equation}
The main idea is to prove that there exists exactly one $(\bar{y},\bar{h})$ satisfying \eqref{mini-3}. Then we will define $(\hat{y},\hat{h})$ using \eqref{lax-1} and we will check that it fulfills the desired properties. Indeed, observe that  the  inequality \eqref{id-remark-1} holds for $(y,h)\in \mathcal{P}_{0},$
\begin{equation}\label{mini-5}
\iint_{Q}\rho^{-2}_{0}|y|^{2}\, dxdt+\iint_{Q}\rho^{-2}_{0}|h|^{2}\, dxdt\leq C a\left((y,h);(y,h)\right), \,\,\, \forall \, (y,h)\in \mathcal{P}_{0}.
\end{equation}
In the linear space $\mathcal{P}_{0}$ we consider the bilinear form $a(\cdot,\cdot)$ given by \eqref{lax-1}; from the unique continuation given in \eqref{eq-carl-2} we deduce that $a(\cdot,\cdot)$ is a scalar product in $\mathcal{P}_{0}.$ Let us now consider  the space $\mathcal{P},$ given by the completion of $\mathcal{P}_{0}$ for the norm  associated to $a(\cdot,\cdot).$ This is a Hilbert space and $a(\cdot,\cdot)$ is a continuous and coercive  bilinear form on $\mathcal{P}.$

We turn to the linear operator $\mathcal{G},$ given by \eqref{mini-4} for all $(y,h)\in \mathcal{P},$ a simple computation leads to 
$$
\mathcal{G}(y,h) \leq \|\rho_{0}g_{1}\|_{L^{2}(Q)} \|\rho^{-1}_{0}y\|_{L^{2}(Q)} + \|\rho_{0}g_{2}\|_{L^{2}(Q)} \|\rho^{-1}h\|_{L^{2}(Q)}. 
$$
Then, using \eqref{mini-5} and the density of $\mathcal{P}$ in $\mathcal{P}_{0},$ we have 
$$
\mathcal{G}(y,h)\leq C \left(\|\rho_{0}g_1\|_{L^{2}(Q)} + \|\rho_{0}g_2\|_{L^{2}(Q)} \right)\|(y,h)\|_{\mathcal{P}},\,\,\forall \, (y,h)\in \mathcal{P}.
$$
Consequently $\mathcal{G}$ is a bounded linear operator on $\mathcal{P}.$ Then, in view of Lax-Milgram's Lemma, there exists one and only one $(\bar{y},\bar{h})$ satisfying
\begin{equation}\label{mini-6}
\left\{\begin{array}{l}
a\left((\bar{y},\bar{h});(y,h)\right)=\mathcal{G}(y,h),\,\, \forall \, (y,h)\in \mathcal{P},\\
	\noalign{\smallskip} 
(\bar{y},\bar{h})\in \mathcal{P}.
\end{array}\right.
\end{equation}
We finally get the existence of $(\hat{y},\hat{h}),$ just setting
$$
\hat{y}=\rho^{-2}_{0}\left(L^{\ast}\bar{y}-\bar{h}\chi_{\mathcal{O}}\right),\, \hat{h}=\rho^{-2}_{0}\tilde{L}\bar{h}\,\,\mbox{and}\,\, \hat{u}=\chi\rho^{-2}_{1}\bar{y}.
$$
We see that $(\hat{y},\hat{h})$ solves \eqref{linear-1} and since  that $(\hat{y},\hat{h})\in \mathcal{P}$ we have
\begin{equation}\label{mini-7}
	\begin{array}{l}
		\displaystyle \iint_{Q}\rho^{2}_{0}|\hat{y}|^{2}\, dxdt+ \iint_{Q}\rho^{2}_{0}|\hat{h}|^{2}\, dxdt+ \iint_{\omega\times (0,T)}\rho^{2}_{1}|\hat{u}|^{2}\, dxdt\\
		\noalign{\smallskip} 
		\phantom{a(TTT)DDD}
		\displaystyle =a\left((\bar{y},\bar{h});(\bar{y},\bar{h})\right) \leq C\iint_{Q}\Big[\rho^{2}|g_1|^{2}+\rho_0^2 |g_2|^2\Big]\,dxdt<\infty.
	\end{array}
\end{equation}
Now, let us prove the items $a),\, b)$ and $c)$.

\underline{\it Proof of $a)$:} 

First, let us denote the functions $y^{\ast},\, h^{\ast},\, F^{\ast}_{1}$ and $F^{\ast}_{2}$ the following way\,\,  $\displaystyle y^{\ast}=\hat{\rho}_{1}\rho^{-2}_{1}\bar{y},$  \,$\displaystyle h^{\ast}=\hat{\rho}_{1}\rho^{-2}_{1}\bar{h},\,\, F^{\ast}_{1}=-\hat{\rho}\rho^{-2}_{1}(L^{\ast}\bar{y}-\bar{h}\chi_{\cal O})-(\hat{\rho}_{1}\rho^{-2}_{1})_{t}\bar{y}\,\,\, \mbox{and}\,\,\, F^{\ast}_{2}=\hat{\rho}_{1}\rho^{-2}_{1}\tilde{L}\bar{h}+(\hat{\rho}_{1}\rho^{-2}_{1})_{t}\bar{h},$ \,
and by the equation \eqref{adjoint-1}, we have that $(y^{\ast}, h^{\ast})$ solves the following system
\begin{equation}\label{mini-8}
\left\{\begin{array}{lll}
-y^{\ast}_{t}-a(0)\Delta y^{\ast}+Ay^{\ast}=h^{\ast}\chi_{\mathcal{O}}+F^{\ast}_{1}  & \mbox{in} & Q,\\
	\noalign{\smallskip} 
h^{\ast}_{t}-a(0)\Delta h^{\ast}+A h^{\ast}=F^{\ast}_{2} & \mbox{in} & Q,\\ 
	\noalign{\smallskip} 
y^{\ast}=0,\, h^{\ast}=0 & \mbox{on} & \Sigma,\\
	\noalign{\smallskip} 
y^{\ast}(x,T)=0,\, h^{\ast}(x,0)=0 & \mbox{in} & \Omega,
\end{array}\right.
\end{equation}
thanks to \eqref{mini-7} we have 
$$
\|F^{\ast}_{1}\|^{2}_{L^{2}(Q)} + \|F^{\ast}_{2}\|^{2}_{L^{2}(Q)}\leq a\left((\bar{y},\bar{h});(\bar{y},\bar{h})\right).
$$
Then,  
$$\displaystyle y^{\ast}\in L^{2}(0,T;H^{2}(\Omega))\,\,\mbox{ and}\,\,  \displaystyle y^{\ast}_{t}\in L^{2}(0,T;L^{2}(\Omega)).$$
 From  definition of  $\hat{u}$, we have    
$$\hat{\rho}_{1}\hat{u}\in L^{2}(0,T;H^{2}(\Omega)),\,  \hat{\rho}_{1}\hat{u}_{t}\in L^{2}(Q),$$
with 
$$
\iint_{\omega\times (0,T)}\hat{\rho}^{2}_{1}\left(|\hat{u}_{t}|^{2}+|\Delta \hat{u}|^{2}\right)\, dxdt\leq C \iint_{\omega\times (0,T)}\rho^{2}_{1}|\hat{u}|^{2}\, dxdt.
$$
\underline{\it Proof of $b)$:}

To this purpose, let multiplying the first PDE in \eqref{linear-1} by  by $\hat{\rho}^{2}y$ and  let us integrate in $\Omega.$ We obtain:
$$
\int_{\Omega}\hat{\rho}^{2}_{0}y\left(y_{t}-a(0)\Delta y+Ay\right)\, dx =\int_{\Omega}\hat{\rho}^{2}_{0}y(u\chi_{\omega}+g_1)\, dx.
$$
Notice that
$$
\begin{array}{l}
\displaystyle \frac{1}{2}\int_{\Omega}\hat{\rho}^{2}_{0}|y|^{2}\, dx+\frac{a(0)}{2}\int_{\Omega}\hat{\rho}^{2}_{0}|\nabla y|^{2}\, dx \displaystyle \leq C\left( \int_{\Omega}\hat{\rho}^{2}_{0}|y|^{2}+ \hat{\rho}^{2}_{0}|g_1|^{2}\, dx+\int_{\omega}\hat{\rho}^{2}_{1}|u|^{2}\, dx \right),
\end{array}
$$
consequently,
\begin{equation}\label{mini-9}
\begin{array}{l}
\displaystyle \sup_{t\in[0,T]}\left(\hat{\rho}^{2}_{0}(t)\int_{\Omega}|y(t)|^{2}\, dx\right)+\iint_{Q}\hat{\rho}^{2}_{0}|\nabla y|^{2}\, dxdt\\
\noalign{\smallskip}\phantom{QQQQQDDDDqq}
\displaystyle \leq C\left( \iint_{Q}\hat{\rho}^{2}_{0}|y|^{2}+ \hat{\rho}^{2}_{0}|g_1|^{2}\, dxdt+\iint_{\omega\times (0,T)}\hat{\rho}^{2}_{1}|u|^{2}\, dxdt \right).
\end{array}
\end{equation}
Similarly, after of multiplying the second equation in \eqref{linear-1}  by $\hat{\rho}^{2} h$  and integrate in Q. We have
\begin{equation}\label{mini-10}
\begin{array}{l}
\displaystyle \sup_{t\in[0,T]}\left(\hat{\rho}^{2}_{0}(t)\int_{\Omega}|h(t)|^{2}\, dx\right)+\iint_{Q}\hat{\rho}^{2}_{0}|\nabla h|^{2}\, dxdt\\
\noalign{\smallskip}\phantom{QQQQQDDDDqq}
\displaystyle \leq C\left( \iint_{Q}\hat{\rho}^{2}_{0}|y|^{2}+\rho^{2}_{0}|h|^{2} \,dxdt+ \iint_{Q}\hat{\rho}^{2}_{0}|g_2|^{2}\, dxdt \right).
\end{array}
\end{equation}
Then, from \eqref{mini-9}-\eqref{mini-10} we have that \eqref{eq-null-2} holds.

\underline{\it Proof of $c):$}

Proceeding as before, let us multiplying the first and second equation in \eqref{linear-1} by terms $\hat{\rho}^{2}_{1}y_{t}$ and $\hat{\rho}^{2}_{1}h_{t}$, integrating in $\Omega$  and using the properties of weights $\hat{\rho}_{1}, \, \hat{\rho}_{1,t}$ as above. We have
\begin{equation}\label{mini-11}
\begin{array}{l}
\displaystyle \sup_{t\in [0,T]}\left(\hat{\rho}^{2}_{1}(t)\int_{\Omega}|\nabla y(t)|^{2}\, dx\right)+\sup_{t\in [0,T]}\left(\hat{\rho}^{2}_{1}(t)\int_{\Omega}|\nabla h(t)|^{2}\, dx\right)\\
\noalign{\smallskip}\phantom{QQQQQDDDDqq}
\displaystyle +\iint_{Q}\hat{\rho}^{2}_{1}\left(|y_{t}|^{2}+|h_{t}|^{2} \right)\, dxdt \leq C(RS),
\end{array}
\end{equation}
where $(RS)$ represent the term in the right side of \eqref{est-2}.

Continuing, let us multiplying the equations in \eqref{linear-1} by $\hat{\rho}^{2}_{1}\Delta y$ and $\hat{\rho}^{2}_{1}\Delta h$, after let us integrate in $\Omega$ and due to estimate \eqref{mini-11} we get the following inequality 
\begin{equation}\label{mini-12}
\iint_{Q}\hat{\rho}^{2}_{1}\left(|\Delta y|^{2}+| \Delta h|^{2} \right) \, dxdt \leq C(RS).
\end{equation}
From the estimates \eqref{mini-9}-\eqref{mini-12}, the inequality \eqref{est-2} holds.

\end{proof}

\subsection{Some additional estimates of the state}

The next results provide additional properties of the states found in Proposition \ref{p-null-c-1}. They will be needed below, in Section 4.
\begin{proposition}\label{p-null-c-2}
Let the hypothesis in Proposition \ref{p-null-c-1} be satisfied and  let the state-control $(y,h,u)$ of \eqref{linear-1} satisfying \eqref{eq-null-1}. Then
\begin{description}
 
\item[a)] If\,  $\hat{\rho}_{1}g_{1,t} \in L^{2}(Q),$ we have

\begin{equation}
\label{eq-null-3}
\begin{array}{l}
 \displaystyle \iint_{Q}\hat{\rho}^{2}_{3} (|y_{tt}|^{2}+|\Delta y_{t}|^{2}+|\nabla y_{t}|^{2}) \,dx\,dt +
   \underset{t\in [0,T]}{\sup} \left(\hat{\rho}^{2}_{2}(t) \int_{\Omega}|\Delta y(t)|^2\, dx\right)\\
\noalign{\smallskip} 
	\phantom{()}
\displaystyle 
+\underset{t\in [0,T]}{\sup} \left(\hat{\rho}^{2}_{3}(t) \int_{\Omega}|\nabla y_{t}(t)|^2\, dx\right)
 \leq C \left( \iint_Q (\rho^2 |g_1|^2 +\rho_0^2 |g_2|^2+\hat{\rho}^{2}_{1}|g_{1,t}|^{2})\,dx\,dt \right.\\
\noalign{\smallskip} 
	\phantom{(Q)}
 \displaystyle \left. 
+\iint_{\omega\times(0,T)} \rho^{2}_{1}|u|^{2}+\rho_1^2 |u_{t}|^2 \,dx\,dt + \iint_Q \rho_0^2 |y|^2+\rho^{2}_{0}|h|^{2} \,dx\,dt \right), \\
\end{array}
\end{equation}

\item[b)] If \,   $\hat{\rho}_{1}g_{1,t} \in L^{2}(Q)$ and  $ \hat{\rho}_{4}g_{1} \in L^{2}(0,T;H^{2}(\Omega)\cap H^{1}_{0}(\Omega)),$ we have

\begin{equation}
\label{eq-null-4}
\begin{array}{l}
\displaystyle \iint_{Q}\hat{\rho}^{2}_{4}(|\Delta^{2}y|^{2}+|\nabla \Delta y|^{2})\, dxdt+ \underset{t\in [0,T]}{\sup} \left(\hat{\rho}^{2}_{5}(t) \int_{\Omega}|\nabla \Delta y(t)|^2\, dx\right)\\
\noalign{\smallskip} 
	\phantom{( )}
\leq \displaystyle  C \left( \iint_Q (\rho^2 |g_1|^2 +\rho_0^2 |g_2|^2)\,dx\,dt +\iint_{\omega\times(0,T)} \rho^{2}_{1}|u|^{2}+\rho_1^2 |u_{t}|^2 \,dx\,dt \right.\\
\noalign{\smallskip} 
	\phantom{N^2(\tilde{W} )W}
 \displaystyle \left. + \iint_{Q}\hat{\rho}^{2}_{1}|g_{1,t}|^{2}+\hat{\rho}^{2}_{4}|\Delta g_{1}|^{2}\,dx\,dt + \iint_Q \rho_0^2 |y|^2+\rho^{2}_{0}|h|^{2} \,dx\,dt \right). \\
\end{array}
\end{equation}
\end{description}
\end{proposition}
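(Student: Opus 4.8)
The plan is to obtain both items by a bootstrap/energy argument applied to the first equation of \eqref{linear-1}, successively differentiated in $t$ and in space, always using the estimates already established in Proposition \ref{p-null-c-1} as the base case and the explicit behaviour of the weights $\hat{\rho}_k$ as $t\to 0^+$. The key algebraic facts I would isolate first are the comparison inequalities between consecutive weights, namely that $\hat{\rho}_{k+1}\le C\hat{\rho}_k$, that $|\partial_t(\hat{\rho}_k^2)|\le C\hat{\rho}_{k+?}^2$ for an appropriate shift in the index (this is where the definition $\hat{\rho}_k=e^{\frac32 s\beta^*}{\xi^*}^{-(15+2k)/2}$ and the bound $|\beta^*_t|\le C(\xi^*)^{?}$, $|\xi^*_t|\le C(\xi^*)^{2}$ near $t=0$ are used), and that the extra powers of ${\xi^*}^{-1}$ picked up at each differentiation are exactly compensated by passing from $\hat{\rho}_k$ to $\hat{\rho}_{k+1}$. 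Once these weight manipulations are recorded, the rest is a sequence of standard parabolic energy estimates.

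For item a), I would differentiate the first equation of \eqref{linear-1} with respect to $t$, obtaining $ (y_t)_t - a(0)\Delta (y_t) + A y_t = (u\chi_\omega)_t + g_{1,t}$, and then run two energy estimates on this equation: first multiply by $\hat{\rho}_3^2 y_t$ and integrate over $\Omega$ to control $\sup_t \hat{\rho}_3^2\int_\Omega|\nabla y_t|^2$ together with $\iint_Q \hat{\rho}_3^2(|y_{tt}|^2 + \dots)$ after absorbing the weight-derivative term using $|\partial_t(\hat{\rho}_3^2)|\le C\hat{\rho}_1^2$ (roughly) and the already-known bound on $\iint_Q\hat{\rho}_1^2|y_t|^2$ from \eqref{est-2}; second, multiply by $\hat{\rho}_3^2 \Delta y_t$ (or integrate the $y_{tt}$ term against $\hat{\rho}_3^2\Delta y_t$) to recover $\iint_Q\hat{\rho}_3^2|\Delta y_t|^2$. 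The bound on $\sup_t\hat{\rho}_2^2\int_\Omega|\Delta y(t)|^2$ then follows by writing $a(0)\Delta y = -y_t - Ay + u\chi_\omega + g_1$, squaring, multiplying by $\hat{\rho}_2^2$, integrating in $x$, and taking the supremum in $t$, invoking \eqref{est-2} for the $\hat{\rho}_1^2|y_t|^2$ and $\hat{\rho}_1^2|u|^2$ contributions (noting $\hat{\rho}_2\le C\hat{\rho}_1$). The right-hand side $(u\chi_\omega)_t$ is harmless because $\iint_{\omega\times(0,T)}\hat{\rho}_1^2|u_t|^2$ is finite by item a) of Proposition \ref{p-null-c-1}, and $\hat{\rho}_3\le C\hat{\rho}_1$.

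For item b), I would further apply the Laplacian to the equation: $(\Delta y)_t - a(0)\Delta(\Delta y) + A\Delta y = \Delta(u\chi_\omega) + \Delta g_1$, and perform the analogous two energy estimates with weight $\hat{\rho}_4^2$ — multiplying by $\hat{\rho}_4^2\Delta y$ to get $\sup_t\hat{\rho}_4^2\int_\Omega|\Delta y|^2 + \iint_Q\hat{\rho}_4^2|\nabla\Delta y|^2$ (absorbing $\partial_t(\hat{\rho}_4^2)$ against the $\hat{\rho}_2^2\int_\Omega|\Delta y|^2$ term from item a), and using $\hat\rho_4\le C\hat\rho_2$), and then multiplying by $\hat{\rho}_4^2\Delta^2 y$ (equivalently estimating $a(0)\Delta^2 y = (\Delta y)_t + A\Delta y - \Delta(u\chi_\omega) - \Delta g_1$ in $L^2$ with weight $\hat{\rho}_4$) to recover $\iint_Q\hat{\rho}_4^2|\Delta^2 y|^2$; the $\sup_t\hat{\rho}_5^2\int_\Omega|\nabla\Delta y|^2$ bound comes from one more energy estimate, now multiplying the $\Delta$-differentiated equation by $\hat{\rho}_5^2\Delta^2 y$ and integrating by parts, with the weight-derivative term absorbed against $\iint_Q\hat{\rho}_4^2|\nabla\Delta y|^2$ since $|\partial_t(\hat{\rho}_5^2)|\le C\hat{\rho}_4^2$ and $\hat\rho_5\le C\hat\rho_4$. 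The hypothesis $\hat{\rho}_4 g_1\in L^2(0,T;H^2\cap H^1_0)$ is exactly what makes $\iint_Q\hat{\rho}_4^2|\Delta g_1|^2$ finite, and $\hat{\rho}_1 g_{1,t}\in L^2(Q)$ feeds the reduction to item a).

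The main obstacle I anticipate is purely bookkeeping: tracking the precise index shifts so that every weight-derivative term $\partial_t(\hat{\rho}_k^2)$ and every extra ${\xi^*}^{-1}$ produced by an integration by parts in $x$ (through the interaction of $\nabla\hat{\rho}_k$ with $\lambda$, $\xi^*$) is genuinely dominated by a term already controlled at the previous step, with no circular dependence. Concretely, one must check near $t=0$ that $\hat{\rho}_{k,t}=O({\xi^*}\,\hat{\rho}_k)$ and ${\xi^*}^{-1}\le C$, so that $\hat{\rho}_{k,t}^2\le C\,{\xi^*}^2\hat{\rho}_k^2\le C\hat{\rho}_{k-1}^2$ up to adjusting constants — this monotone decay of the $\hat{\rho}_k$ in $k$ (more negative power of $\xi^*$ for larger $k$) is what closes the induction. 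There is no genuinely new analytic difficulty beyond interior parabolic regularity, which is available since $a(0)$ is constant and the domain is smooth; the delicate point is only to present the chain of absorptions cleanly.
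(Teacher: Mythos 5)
Your proposal follows essentially the same route as the paper's proof: differentiate the first equation of \eqref{linear-1} in time, then apply the Laplacian, and run weighted energy estimates with the multipliers $\hat{\rho}_{2}^{2}y_{t}$, $\hat{\rho}_{2}^{2}\Delta y$, $\hat{\rho}_{3}^{2}y_{tt}$, $-\hat{\rho}_{3}^{2}\Delta y_{t}$, $\hat{\rho}_{4}^{2}\Delta y$, $-\hat{\rho}_{4}^{2}\Delta^{2}y$ and $\hat{\rho}_{5}^{2}(\Delta y)_{t}$, absorbing the weight-derivative terms into the quantities already controlled by Proposition \ref{p-null-c-1}. Two bookkeeping corrections: since $|\partial_{t}\beta^{\ast}|+|\partial_{t}\xi^{\ast}|\leq C(\xi^{\ast})^{2}$ near $t=0$ (the weights depend only on $t$, so no spatial derivatives of $\hat{\rho}_{k}$ arise), one has $|\hat{\rho}_{k,t}|\leq C(\xi^{\ast})^{2}\hat{\rho}_{k}=C\hat{\rho}_{k-2}$ --- an index shift of two, not one as you wrote, which is exactly why the statement's indices jump from $\hat{\rho}_{1}$ to $\hat{\rho}_{3}$ to $\hat{\rho}_{5}$ --- and your shortcut for $\sup_{t}\hat{\rho}_{2}^{2}\int_{\Omega}|\Delta y(t)|^{2}$ via the pointwise identity $a(0)\Delta y=-y_{t}-Ay+u\chi_{\omega}+g_{1}$ needs a sup-in-time bound on $\hat{\rho}_{2}g_{1}$ that is not on the right-hand side of \eqref{eq-null-3}; either recover it by writing $\hat{\rho}_{2}^{2}(t)\|g_{1}(t)\|^{2}=\int_{0}^{t}\partial_{s}\left(\hat{\rho}_{2}^{2}\|g_{1}\|^{2}\right)ds$ using the hypothesis $\hat{\rho}_{1}g_{1,t}\in L^{2}(Q)$, or do as the paper does and test the undifferentiated equation against $\hat{\rho}_{2}^{2}\Delta y$.
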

\begin{proof}
Due to Proposition \ref{p-null-c-1}, we have that there exists an state-control $(y,h,u)$ of \eqref{linear-1} satisfying \eqref{eq-null-1}, \eqref{eq-null-2} and \eqref{est-2}. So, now on we are going to prove that this solution $y,h,u$  also satisfy the estimations $a)$ and  $b).$

\underline{\it Proof of a):}
In order to prove the estimate \eqref{eq-null-3}, let us derivative the first equation in \eqref{linear-1} with respect the time variable.
\begin{equation}\label{n-mini-1}
y_{tt}-a(0)  \Delta y_{t}+Ay_{t}=u_{t}\chi_{\omega}+g_{1,t}\,\, \mbox{in}\,\, Q.
\end{equation}
In the following, let us prove similar estimates as in Proposition \ref{p-null-c-1}.
\begin{itemize}
	\item 
Multiplying in \eqref{n-mini-1} by $\hat{\rho}^{2}y_{t}$ and integrating in $Q$ and  from 
 Proposition \ref{p-null-c-1} we have
\begin{equation}\label{n-mini-2}
\sup_{t\in[0,T]}\left(\hat{\rho}^{2}_{2}(t)\int_{\Omega}|y_{t}(t)|^{2}\, dx \right)+\iint_{Q}\hat{\rho}^{2}_{2}|\nabla y_{t}|^{2}\, dxdt\leq C(RS)_{1},
\end{equation}
here $(RS)_{1}$ is the right side  in \eqref{eq-null-3}.

\item
Let us multiplying by $\hat{\rho}^{2}_{2}\Delta y$ in the first equation in \eqref{linear-1} and integrating in $Q$. Thanks the estimates \eqref{n-mini-2}, we have
\begin{equation}\label{n-mini-3}
\sup_{t\in[0,T]}\left(\hat{\rho}^{2}_{2}(t)\int_{\Omega}|\Delta y(t)|^{2}\, dx\right)\leq C(RS)_{1}.
\end{equation}

\item
Multiplying by $\hat{\rho}^{2}_{3}y_{tt}$ in \eqref{n-mini-1} and integrating in $Q$.
\begin{equation}\label{n-mini-4}
\sup_{t\in[0,T]}\left(\hat{\rho}^{2}_{3}(t)\int_{\Omega}|\nabla y_{t}(t)|^{2}\, dx\right)+\iint_{Q}\hat{\rho}^{2}_{3}|y_{tt}|^{2}\, dxdt\leq C(RS)_{1}.
\end{equation}

\item
Multiplying by $-\hat{\rho}^{2}_{3}\Delta y_{t}$ in \eqref{n-mini-1} and integrating in $Q$.
\begin{equation}\label{n-mini-5}
\iint_{Q}\hat{\rho}^{2}_{3}|\Delta y_{t}|^{2}\, dxdt\leq C(RS)_{1}.
\end{equation}
\end{itemize}
Therefore from the previous estimates \eqref{n-mini-2}-\eqref{n-mini-5}, we conclude that \eqref{eq-null-2} holds.

\underline{\it Proof of $b)$:}

To prove this item, let us apply the Laplacian operator $\Delta $ in the system \eqref{linear-1} and get the following new system:
\begin{equation}
\left\{\begin{array}{lll}
\hat{y}_{t}-a(0)\Delta \hat{y}+A\hat{y}=\Delta (u\chi_{\omega})+\Delta g_{1} & \mbox{in} & Q,\\
\noalign{\smallskip}
\hat{y}=0 & \mbox{on} & \Sigma,\\
\noalign{\smallskip}
\hat{y}(x,0)=0 & \mbox{in} &\Omega.
\end{array}\right.
\end{equation}
Where $\hat{y}=\Delta y$, in the way similar as in part of estimates $a)$

\begin{itemize}
  \item
	
	Multiplying by $\hat{\rho}^{2}_{4} \hat{y}$, integrating in $Q$ and using the previous  estimates of $a)$. We have
	\begin{equation}\label{b-mini-1}
	\iint_{Q}\hat{\rho}^{2}_{4}|\nabla \hat{y}|^{2}\, dxdt\leq C(RS)_{2},
	\end{equation}
	where  $(RS)_2$ is the right side of \eqref{eq-null-4}.

	\item 
	
	Multiplying by $-\hat{\rho}^{2}_{4} \Delta \hat{y}$, integrating in $Q$ and using the previous  estimates of $a)$. We have
	\begin{equation}\label{b-mini-2}
	\iint_{Q}\hat{\rho}^{2}_{4}|\Delta \hat{y}|^{2}\, dxdt\leq C(RS)_{2}.
	\end{equation}
   	
	\item  
	
	Multiplying by  $\hat{\rho}^{2}_{5} \hat{y}_{t}$, integrating in $Q$ and using the previous  estimates of $a)$ join with \eqref{b-mini-1}-\eqref{b-mini-2}. We have
	\begin{equation}\label{b-mini-3}
	\sup_{t\in[0,T]}\left(\hat{\rho}^{2}_{5}(t)\int_{\Omega}|\nabla \hat{y}(t)|^{2}\, dx\right)\leq C(RS)_{2}.
	\end{equation}
\end{itemize}
	Therefore, the estimates \eqref{b-mini-1}-\eqref{b-mini-3} the item $b)$ holds.

Thus, we conclude the proof.
\end{proof}


\section{Proof of the  main result}\label{sec4}
\setcounter{equation}{0}

\subsection{Locally null controllability  of  optimal system \eqref{eq-char}}

In this subsection we prove the null controllability for the optimal system using Liusternik's theorem.
\begin{theorem}[\textbf{Liusternik's Theorem}]
\label{Liu-1}
Let $E$ and $F$ be Banach spaces and let $\mathcal{A}:  B_r(0) \subset E \to F$ be a $C^1$ mapping. Let us assume that the derivative $\mathcal{A}'(0): E \to F$ is onto and let us denote set $\xi_0=\mathcal{A}(0)$. Then, there exist $\epsilon>0$, a mapping $W: B_\epsilon(\xi_0)\subset F \to E$ and a constant $K>0$ satisfying
$$W(z) \in B_r(0) \ \text{and} \ \ \mathcal{A}(W(z))=z, \, \ \ \forall z \in B_\epsilon(\xi_0),$$
$$\|W(z)\|_E\leq K\|z-\xi_0\|_F\, \ \ \forall z \in B_\epsilon(\xi_0).$$
\end{theorem}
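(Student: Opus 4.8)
The plan is to prove this as the classical surjective (Lyusternik) mapping theorem: extract a bounded right inverse of $\mathcal{A}'(0)$ from the open mapping theorem, and feed it into a Newton-type iteration. Write $\Lambda:=\mathcal{A}'(0)\in\mathcal{L}(E,F)$. Since $\Lambda$ is onto and $E,F$ are Banach spaces, the open mapping theorem makes $\Lambda$ open, so there are a constant $C>0$ and a map $M:F\to E$ — a bounded selection of the right inverse of $\Lambda$, \emph{not} assumed linear — such that $\Lambda My=y$ and $\|My\|_E\le C\|y\|_F$ for every $y\in F$; in particular $My=0$ forces $y=\Lambda My=0$.

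Next I would split $\mathcal{A}(e)=\xi_0+\Lambda e+R(e)$, with $R(e):=\mathcal{A}(e)-\xi_0-\Lambda e$. As $\mathcal{A}\in C^1(B_r(0);F)$, $R$ is $C^1$ with $R(0)=0$ and $R'(0)=0$, so fixing $\delta\in(0,\tfrac{1}{2C})$ there is $r'\in(0,r)$ with $\|R'(e)\|_{\mathcal{L}(E,F)}\le\delta$ on $B_{r'}(0)$; the mean value inequality then gives $\|R(e_1)-R(e_2)\|_F\le\delta\,\|e_1-e_2\|_E$ for $e_1,e_2\in \overline{B_{r'}(0)}$.

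Then, for $z\in F$ with $\|z-\xi_0\|_F<\epsilon:=r'/(2C)$, I would define the sequence
\[
e_0:=0,\qquad e_1:=M(z-\xi_0),\qquad e_{n+1}:=e_n-M\big(\mathcal{A}(e_n)-z\big)\quad(n\ge1).
\]
Using $\Lambda M=\mathrm{Id}_F$ one computes $\mathcal{A}(e_{n+1})-z=R(e_{n+1})-R(e_n)$ for every $n\ge0$, whence $\|e_{n+2}-e_{n+1}\|_E\le C\|R(e_{n+1})-R(e_n)\|_F\le C\delta\,\|e_{n+1}-e_n\|_E\le\tfrac12\|e_{n+1}-e_n\|_E$, as long as the iterates stay in $\overline{B_{r'}(0)}$. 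Combined with $\|e_1\|_E\le C\|z-\xi_0\|_F$, a short induction shows all $e_n$ do lie in $\overline{B_{r'}(0)}$ with $\|e_n\|_E\le 2C\|z-\xi_0\|_F$, so $\{e_n\}$ is Cauchy in $E$; its limit $W(z):=e^{\ast}$ satisfies $\|W(z)\|_E\le 2C\|z-\xi_0\|_F<r'<r$, and passing to the limit in $\mathcal{A}(e_{n+1})-z=R(e_{n+1})-R(e_n)$ — the right side tends to $0$ since $R$ is continuous and $e_n\to e^{\ast}$ — yields $\mathcal{A}(W(z))=z$. Taking $K:=2C$ then gives the stated bound $\|W(z)\|_E\le K\|z-\xi_0\|_F$.

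I expect the only genuinely delicate point to be that $M$ is merely a \emph{bounded} right inverse, not a linear one, so the scheme is not a plain Banach contraction in the variable $z-\xi_0$; the correct object is the Newton correction $e_{n+1}=e_n-M(\mathcal{A}(e_n)-z)$, together with the inductive bookkeeping that keeps every iterate inside the ball $\overline{B_{r'}(0)}$ on which the Lipschitz bound for $R$ holds (and choosing $\delta$ small relative to $C$). Alternatively, when $\ker\Lambda$ splits in $E$ one may pick a linear right inverse and invoke the implicit function theorem directly, but the iteration above needs no such hypothesis.
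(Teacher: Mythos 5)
Your argument is correct, and it is worth noting that the paper does not prove Theorem~\ref{Liu-1} at all: it simply cites the book of Alekseev--Tikhomirov--Fomin (\cite{Ale1}) for the proof. What you have written is a self-contained version of the classical Graves--Lyusternik argument, and the two delicate points are handled properly. First, the right inverse $M$ produced by the open mapping theorem is only a bounded (generally nonlinear, non-continuous) selection, and your scheme never uses more than the two identities $\Lambda M y = y$ and $\|My\|_E \le C\|y\|_F$; in particular the key telescoping identity $\mathcal{A}(e_{n+1}) - z = R(e_{n+1}) - R(e_n)$ is verified correctly both for $n=0$ (where it uses $\Lambda e_1 = z - \xi_0$ and $R(0)=0$) and for $n\ge 1$ (where it uses $\Lambda M(\mathcal{A}(e_n)-z) = \mathcal{A}(e_n)-z$). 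Second, the inductive bookkeeping that keeps all iterates in $\overline{B_{r'}(0)}$ --- where the mean value inequality gives the Lipschitz bound $\|R(e_1)-R(e_2)\|_F \le \delta\|e_1-e_2\|_E$ with $C\delta \le \tfrac12$ --- is exactly what makes the geometric decay $\|e_{n+1}-e_n\|_E \le 2^{-n}\|e_1\|_E$ legitimate, and your choice $\epsilon = r'/(2C)$ closes that induction. One cosmetic remark: since $M$ need not be odd, $e_1 := M(z-\xi_0)$ is not literally $e_0 - M(\mathcal{A}(e_0)-z) = -M(\xi_0 - z)$, so it is right that you define $e_1$ separately rather than by the recursion; with that convention everything is consistent. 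Your closing observation is also accurate: when $\ker\Lambda$ is complemented one could instead take a continuous linear right inverse and invoke the implicit function theorem, but the iteration you give requires no splitting hypothesis, which is the whole point of Lyusternik's theorem in this generality.
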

The proof of this Theorem can be found in \cite{Ale1}.

Now, let us introduce the space
$$
\begin{array}{l}
E=\Big\{ (y,h,u):\, \rho_{0} y,\, \rho_{0} h,\, y_{x_{i}},\, y_{x_{i}x_{j}}\, \in L^{2}(Q);\, \rho u,\, \hat{\rho}_{1}u_{t},\, \hat{\rho}_{3}\Delta u\in L^{2}(\omega\times (0,T)),\Big.\\
\noalign{\smallskip}\phantom{QQQ}
\displaystyle \, \rho (\mathcal{L}_{1}y-u\chi_{\omega}),\, \hat{\rho}_{1}(\mathcal{L}_{1}y_{t}-u_{t}\chi_{\omega}),\, \hat{\rho}_{3}(\mathcal{L}_{1}\Delta y-\Delta(u\chi_{\omega})),\, \rho (\mathcal{L}_{2}h-y\chi_{\mathcal{O}})\in L^{2}(Q),\\
\noalign{\smallskip}\phantom{QQQ}
\displaystyle
\, \Big. \, y|_{\Sigma}=0,\, h|_{\Sigma}=0,\, h(T)= 0\Big\},
\end{array}
$$
where $\mathcal{L}_{1}$ and $\mathcal{L}_{2}$  denote the following expressions:
$$
\begin{array}{l}
\mathcal{L}_{1}y=y_{t}-a(0)\Delta y+Ay,\,\, \mathcal{L}_{2}h=-h_{t}-a(0)\Delta h+Ah,
\end{array}
$$
and  the norm in $E$ is 
$$
\begin{array}{l}
	\|(y,h,u)\|^{2}_{E}=\|\rho_{0} y\|^{2}_{L^{2}(Q)} + \|\rho_{0} h\|^{2}_{L^{2}(Q)} + \|\hat{\rho} u\|^{2}_{L^{2}(\omega\times (0,T))}\\
\noalign{\smallskip}\phantom{QQQQTTQ}
\displaystyle +\|\rho (\mathcal{L}_{1}y-u\chi_{\omega})\|^{2}_{L^{2}(Q)} + \|\hat{\rho}(\mathcal{L}_{1}y_{t}-u_{t}\chi_{\omega})\|^{2}_{L^{2}(Q)}\\
\noalign{\smallskip}\phantom{QQQQQTT}
\displaystyle + \|\hat{\rho}_{3}(\mathcal{L}_{1}\Delta y-\Delta(u\chi_{\omega}))\|^{2}_{L^{2}(\omega\times (0,T))} + \|\rho (\mathcal{L}_{2}h-y\chi_{\mathcal{O}} )\|^{2}_{L^{2}(Q)}.
\end{array}
$$
It is clear that $E$ is a Banach space for the norm $\|\cdot\|_E$.

\begin{remark}\label{est-prop}
Notice that, if $(y,h,u)\in E$, in view of Proposition \ref{p-null-c-1} and \ref{p-null-c-2}, one has
$$
\begin{array}{l}
	\|\hat{\rho}_{4} \Delta y\|^{2}_{L^{\infty}(0,T;H^{1}_{0}(\Omega))} + \|\hat{\rho}_{1}h \|^{2}_{L^{\infty}(0,T;H^{1}_{0}(\Omega))} + \|\hat{\rho}_{4} y\|^{2}_{X_{1}} + \|\hat{\rho}_{1}h_{t}\|^{2}_{X_{0}}\leq C \|(y,h,u)\|^{2}_{E}.\\
\end{array}
$$
\end{remark}
Also, let us define the following space
$$
\begin{array}{l}
F_{1}=\Big\{ g:\,  \rho g ,\, \hat{\rho}_{1}g_{t}\in L^{2}(Q);\, \hat{\rho}_{3}g\in L^{2}(0,T;H^{2}(\Omega)\cap H^{1}_{0}(\Omega))\Big\}, \phantom{QQQQQqqqweeww}\\
\noalign{\smallskip}
F_{2}=\Big\{ g:\,  \rho g\in L^{2}(Q)\Big\},
\end{array}
$$
and  denote by $F = F_{1}\times F_{2},$ with the  norm 
$$\|(g_1,g_2)\|^{2}_{F}:=\iint_{Q} \Big[|\rho g_1|^{2}+|\hat{\rho}_{1}g_{1,t}|^{2}+|\hat{\rho}_{3}\Delta g_1|^{2}\Big]\, dxdt+\iint_{Q}|\rho g_2|^{2}\, dxdt.$$
It is clear that  $F$  is Banach space with this norm.

Now, let us define  the  mapping  $\mathcal{A}:E \mapsto F,$ giving by $\mathcal{A}=(\mathcal{A}_{1},\mathcal{A}_{2})$ where
\begin{equation}\label{def-A}
\left\{\begin{array}{l}
\mathcal{A}_{1}(y,h,u)=y_{t}-\nabla \cdot \left(a(\nabla y)\nabla y\right)+f(y)-u\chi_{\omega},\phantom{QQQQQqq}\\
\noalign{\smallskip}
\mathcal{A}_{2}(y,h,u)=-h_{t}-\nabla\cdot\left[D a(\nabla y)(\nabla y\cdot\nabla h)+a(\nabla y)\nabla h \right]+f'(y)h-y\chi_{\mathcal{O}}.
\end{array}\right.
\end{equation}

We  will prove that  there exists $\epsilon>0$  such  that, if $(g_{1},g_{2})\in F$  and  $\|(g_{1},g_{2})\|_{F}\leq \epsilon$, then  the equation
\begin{equation}\label{H-eq1}
{\cal A}(y,h,u)=(g_{1},g_{2}),\,\,  \ \text{where }\ (y,h,u)\in E,
\end{equation}
possesses  at least  one solution.

In particular, this  shows  that \eqref{eq-char}  is  locally nul controllable at time $t=0$ and,  furthermore, the state-control  triplets  can  be  chosen  in  $E$.  We  will  apply  Theorem  $\ref{Liu-1}$.

In order to show that Theorem $\ref{Liu-1}$ can be applied in this setting, we will use several lemmas.

\begin{lemma}\label{lem1}
Let $\mathcal{A}: E \rightarrow F$ be the mapping defined by $(\ref{def-A})$. Then, $\mathcal{A}$ is well defined and continuous.
\end{lemma}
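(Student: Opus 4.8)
The plan is to write $\mathcal{A}$ as the linear operator whose components appear, together with the relevant time and space derivatives, in the very definition of $\|\cdot\|_{E}$, plus an explicit nonlinear remainder, and then estimate that remainder term by term in the weighted norm of $F$. Using $\nabla\cdot\left(a(\nabla y)\nabla y\right)=a(\nabla y)\Delta y+\left(Da(\nabla y){:}D^{2}y\right)\cdot\nabla y$, one gets
\[
\mathcal{A}_{1}(y,h,u)=\left(\mathcal{L}_{1}y-u\chi_{\omega}\right)+N_{1}(y),\quad
N_{1}(y):=\left(a(0)-a(\nabla y)\right)\Delta y-\left(Da(\nabla y){:}D^{2}y\right)\cdot\nabla y+\left(f(y)-Ay\right),
\]
and, analogously, $\mathcal{A}_{2}(y,h,u)=(\mathcal{L}_{2}h-y\chi_{\mathcal{O}})+N_{2}(y,h)$, where $N_{2}$ collects the terms produced by $Da(\nabla y)(\nabla y\cdot\nabla h)$, by $(a(0)-a(\nabla y))\Delta h$ and by $(f'(y)-A)h$. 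By the hypotheses \eqref{hipotesis} (the derivatives of $a$ up to order $3$ and of $f$ up to order $2$ are bounded by $M$, $f(0)=0$, $A=f'(0)$) and Taylor's formula, $a(0)-a(\nabla y)=O(|\nabla y|)$, $f(y)-Ay=O(|y|^{2})$, $f'(y)-A=O(|y|)$, so that each monomial of $N_{1}$ and $N_{2}$ is the product of a uniformly bounded function of $\nabla y$ (or $y$) with derivatives of $y$ and $h$, at least one of the factors being of low order.

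For the well-definedness one must check the four quantities that enter $\|(g_{1},g_{2})\|_{F}$, namely $\rho N_{1}$, $\hat{\rho}_{1}\partial_{t}N_{1}$, $\hat{\rho}_{3}\Delta N_{1}$ in $L^{2}(Q)$, and $\rho N_{2}$ in $L^{2}(Q)$, together with $\mathcal{A}_{1}|_{\Sigma}=0$ (which follows from the trace conditions $y|_{\Sigma}=h|_{\Sigma}=0$ built into $E$); the linear parts $\mathcal{L}_{1}y-u\chi_{\omega}$, $\Delta(\mathcal{L}_{1}y-u\chi_{\omega})$, $\partial_{t}(\mathcal{L}_{1}y-u_{t}\chi_{\omega})$ and $\mathcal{L}_{2}h-y\chi_{\mathcal{O}}$ are controlled by $\|(y,h,u)\|_{E}$ by definition. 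By Remark \ref{est-prop}, membership in $E$ yields $\hat{\rho}_{4}y\in X_{1}$, $\hat{\rho}_{4}\Delta y\in L^{\infty}(0,T;H^{1}_{0}(\Omega))$, $\hat{\rho}_{1}h\in L^{\infty}(0,T;H^{1}_{0}(\Omega))$, $\hat{\rho}_{1}h_{t}\in X_{0}$, and Propositions \ref{p-null-c-1}--\ref{p-null-c-2} give weighted control of the remaining derivatives of $y$ and $h$ (up to fourth order in space for $y$). Since $N\le 3$, the Sobolev embeddings $H^{2}(\Omega)\hookrightarrow L^{\infty}(\Omega)$, $H^{3}(\Omega)\hookrightarrow C^{1}(\overline{\Omega})$, $H^{1}_{0}(\Omega)\hookrightarrow L^{6}(\Omega)$ and $X_{1}\hookrightarrow C([0,T];H^{3}(\Omega))$ allow me to place the low-order factors of each monomial into $L^{\infty}(Q)$ (absorbing their weights) and the highest-order factor into $L^{2}(Q)$. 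The mechanism that makes the bookkeeping close is that whenever two weighted quantities are multiplied the resulting weight has exponential part $e^{3s\beta^{\ast}}$, which dominates globally (up to a multiplicative constant) the single weights $\rho=e^{\frac{5}{2}s\beta^{\ast}}$, $\rho_{0}=e^{2s\beta^{\ast}}(\xi^{\ast})^{-3/2}$ and $\hat{\rho}_{k}=e^{\frac{3}{2}s\beta^{\ast}}(\xi^{\ast})^{-(15+2k)/2}$ required by $E$ and $F$, the remaining powers of $\xi^{\ast}$ only helping. Carrying this out for every term — the delicate ones being those in $\hat{\rho}_{3}\Delta N_{1}$, whose expansion yields at top order $Da(\nabla y)\cdot D^{4}y$, $D^{3}a(\nabla y)(D^{2}y)^{3}$, $D^{2}a(\nabla y)(D^{2}y)(D^{3}y)$, estimated via $\hat{\rho}_{4}y\in L^{2}(0,T;H^{4}(\Omega))$ and $\hat{\rho}_{4}\Delta y\in L^{\infty}(0,T;H^{1}_{0}(\Omega))$ — gives $\mathcal{A}(y,h,u)\in F$ with $\|\mathcal{A}(y,h,u)\|_{F}\le P\left(\|(y,h,u)\|_{E}\right)$ for some polynomial $P$ with $P(0)=0$.

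For the continuity I would prove that $\mathcal{A}$ is Lipschitz on every ball $B_{R}(0)\subset E$. Given $(y^{i},h^{i},u^{i})\in E$ with norm $\le R$, the difference $\mathcal{A}(y^{1},h^{1},u^{1})-\mathcal{A}(y^{2},h^{2},u^{2})$ equals, componentwise, the linear operator applied to the difference — bounded by $\|(y^{1},h^{1},u^{1})-(y^{2},h^{2},u^{2})\|_{E}$ — plus $N_{i}(y^{1},h^{1})-N_{i}(y^{2},h^{2})$; expanding the latter by the mean value theorem applied to $a,Da,D^{2}a,f,f',f''$ (licit since $a\in C^{3}(\mathbb{R}^{N})$, $f\in C^{2}(\mathbb{R})$) expresses it once more as a finite sum of monomials, now linear in the difference of the arguments and their derivatives, to which the \emph{same} weighted Hölder--Sobolev estimates apply, yielding a bound $C(R)\,\|(y^{1},h^{1},u^{1})-(y^{2},h^{2},u^{2})\|_{E}$ with $C(R)$ polynomial in $R$. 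In particular $\mathcal{A}$ is continuous on $E$, which is the assertion of Lemma \ref{lem1}; the same scheme, differentiating $a$ and $f$ once more, will later give the $C^{1}$ regularity needed to apply Liusternik's Theorem \ref{Liu-1}.

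The main obstacle is precisely the weight bookkeeping for the highest-order terms, i.e. verifying $\hat{\rho}_{1}\partial_{t}N_{1}\in L^{2}(Q)$ and $\hat{\rho}_{3}\Delta N_{1}\in L^{2}(Q)$: one must differentiate the quasilinear nonlinearity once in time and twice in space and still keep every resulting product of weighted derivatives of $y$ (up to order $4$) and of $h$ inside the range controlled by Propositions \ref{p-null-c-1}--\ref{p-null-c-2} and Remark \ref{est-prop}. Everything else reduces to repeated use of Hölder's inequality and the dimension-$\le 3$ Sobolev embeddings.
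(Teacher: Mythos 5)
Your proposal follows essentially the same route as the paper: the same splitting of $\mathcal{A}_i$ into the linear part $\mathcal{L}_1y-u\chi_{\omega}$ (resp.\ $\mathcal{L}_2h-y\chi_{\mathcal{O}}$) already built into $\|\cdot\|_E$ plus a quasilinear remainder, the same verification of $\rho N_1,\,\hat{\rho}_1\partial_t N_1,\,\hat{\rho}_3\Delta N_1,\,\rho N_2\in L^2(Q)$ term by term via Propositions \ref{p-null-c-1}--\ref{p-null-c-2}, Remark \ref{est-prop}, the embeddings valid for $N\le 3$, and the weight comparison $\rho\le C\hat{\rho}_i\hat{\rho}_j$. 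Your continuity argument (local Lipschitz bounds via the mean value theorem applied to $a,Da,D^2a,f,f',f''$) is just a slightly more explicit rendering of the paper's ``similar arguments'', so the two proofs coincide in substance.
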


\begin{proof}
In order to prove that $\mathcal{A}$ is well defined, we will  show that
\begin{description}
	\item [i)] $\displaystyle \mathcal{A}_{1}(y,h,u)\in F_{1}, \,\, \forall\, (y,h,u)\in E$.
	
	Let $(y,h,u)\in E$, we have
	$$
	\begin{array}{l}
	\displaystyle \iint_{Q}\rho^{2}|\mathcal{A}_{1}(y,h,u)|^{2}\, dxdt\\
				\noalign{\smallskip}\phantom{QQQQA}
	\displaystyle = \iint_{Q}\rho^{2}|y_{t}-\nabla\cdot \left(a(\nabla y)\nabla y\right)+f(y)-u\chi_{\omega} |^{2}\, dxdt\\
				\noalign{\smallskip}\phantom{QQQWA}
	\displaystyle \leq C\iint_{Q}\rho^{2}|\mathcal{L}_{1}y-u\chi_{\omega}|^{2}\, dxdt+C\iint_{Q}\rho^{2}|y|^{2}\, dxdt\\
				\noalign{\smallskip}\phantom{yWWWA}
\displaystyle 	+C \iint_{Q}\rho^{2}|\nabla\cdot\left[(a(\nabla y)-a(0))\nabla y\right]|^{2}\, dxdt\\
			\noalign{\smallskip}\phantom{QQyQWA}
=I_{1}+I_{2}+I_{3}.
	\end{array}
	$$
	From Proposition \ref{p-null-c-2} and Remark \ref{est-prop}, we see that
	$$I_{1} \leq C \|(y,h,u)\|^{2}_{E}\,\,\ \  \mbox{and}\,\,\ \ I_{2} \leq C\|(y,h,u)\|^{2}_{E}.$$
	On the other hand, since $a\in C^{3}(\mathbb{R}^N)$ and is (globally) Lipschitz continuous, one has
	$$
	\begin{array}{l}
	\displaystyle I_{3}\leq C\iint_{Q}\rho^{2}|a(\nabla y)-a(0)|^{2}|\Delta y|^{2}\, dxdt+C\iint_{Q}\rho^{2}|D_i a(\nabla y)|^{2}|\nabla y|^{2}| \Delta y|^{2}\, dxdt\\
				\noalign{\smallskip}\phantom{ I_{3}}
	\displaystyle
	\leq C\iint_{Q}\rho^{2}|\nabla y|^{2}|\Delta y|^{2}\, dxdt.
	\end{array}
	$$
	From definition of $\rho$, one has $\rho\leq C\hat{\rho}_{i}\hat{\rho}_{j}$ and $\|\cdot\|_{L^{\infty}(\Omega)}\leq C\|\cdot\|_{H^{2}(\Omega)}.$
 Then,
	$$
	\begin{array}{l}
	\displaystyle I_{3}\leq C \left( \iint_{Q}\hat{\rho}^{2}_{4}|\nabla \Delta y|^{2}\, dxdt\right) \left(\sup_{t\in[0,T]}(\hat{\rho}^{2}_{3}(t)\int_{\Omega}|\Delta y(t)|^{2}\, dx)\right)\\
				\noalign{\smallskip}\phantom{I_{3}}
	 \leq C \|(y,h,u)\|^{4}_{E}.
	\end{array}
	$$
	For all these estimates, we have
	\begin{equation}\label{A-1}
	\iint_{Q}\rho^{2}|\mathcal{A}_{1}(y,h,u)|^{2}\, dxdt\leq C\left(\|(y,h,u)\|^{2}_{E} + \|(y,h,u)\|^{4}_{E} \right).
	\end{equation}
	Continuing, let $(y,h,u)\in E$
	$$
	\begin{array}{l}
	\displaystyle \iint_{Q}\hat{\rho}^{2}_{1}|\mathcal{A}_{1t}(y,h,u)|^{2}\, dxdt\\
				\noalign{\smallskip}\phantom{QQQQQW}
\displaystyle 
	=\iint_{Q}\hat{\rho}^{2}_{1}|y_{tt}-\nabla\cdot\left(a(\nabla y)\nabla y\right)_{t}+f'(y)y_{t}-u_{t}\chi_{\omega}|^{2}\, dxdt\\
				\noalign{\smallskip}\phantom{QQQQQW}
				\displaystyle \leq C \iint_{Q}\hat{\rho}^{2}_{1}|\mathcal{L}_{1}y_{t}-u_{t}\chi_{\omega}|^{2}\, dxdt+C \iint_{Q}\hat{\rho}^{2}_{1}|y_{t}|^{2}\, dxdt\\
							\noalign{\smallskip}\phantom{QQQQQtWA}
  \displaystyle + C\iint_{Q}\hat{\rho}^{2}_{1}|\nabla\cdot \left[(a(\nabla y)\nabla y)_{t}-a(0)\nabla y_{t}\right]|^{2}\, dxdt\\
				\noalign{\smallskip}\phantom{QQQQQWWWA}
=J_{1}+J_{2}+J_{3}.
	\end{array}
	$$
	From Proposition \ref{p-null-c-2} and Remark \ref{est-prop}, we have
	$$J_{1}\leq C\|(y,h,u)\|^{2}_{E}\,\,\ \ \mbox{and}\ \ \,\, J_{2}\leq C\|(y,h,u)\|^{2}_{E}, \,\,  \forall (y,h,u)\in E.$$
	On the other hand
	$$
	\begin{array}{l}
\displaystyle 	J_{3}\leq C\iint_{Q}\hat{\rho}^{2}_{1}|a(\nabla y)-a(0)|^{2}|\Delta y_{t}|^{2}\, dxdt+C\iint_{Q}\hat{\rho}^{2}_{1}|D^2_{ij} a( \nabla y)|^{2}|\nabla y_{t}|^{2}|\nabla y|^{2}|\Delta y|^{2}\, dxdt\\
				\noalign{\smallskip}\phantom{WA}
\displaystyle +C\iint_{Q} \hat{\rho}^{2}_{1}|D_i a( \nabla y)|^{2}|\Delta y|^{2}|\nabla y_{t}|^{2}\, dxdt+C\iint_{Q}\hat{\rho}^{2}_{1}|D_i a(\nabla y)|^{2}| \Delta y_{t}|^{2}|\nabla y|^{2}\, dxdt\\

				\noalign{\smallskip}\phantom{WA} 
				=J^{1}_{3}+J^{2}_{3}+J^{3}_{3}+J^{4}_{3}.
	\end{array}
	$$
	Now, let bound each term $J^{i}_{3}$, this is
	$$
	\begin{array}{ll}
						\phantom{WAGG} & 
\displaystyle J^{1}_{3}\leq C\iint_{Q}\hat{\rho}^{2}_{1}|\nabla y|^{2}\, dxdt \phantom{WAFFFDYYYgggeeeeeeegggYYdddddddeeehhhhhhhdddddddDDDDFFF} \\
						\noalign{\smallskip}
					\phantom{AGGHHA} & 	\phantom{J^{1}_{3}}
\displaystyle \leq C\left(\sup_{t\in [0,T]}(\hat{\rho}^{2}_{1}(t)\int_{\Omega}|\nabla y(t)|^{2}\, dx) \right)\left(\iint_{Q}\hat{\rho}^{2}_{5}| \Delta y_{t}|^{2}\, dxdt\right)
\\
								\noalign{\smallskip}
	\phantom{WHGGHA}  &	\phantom{J^{1}_{3}}
\displaystyle \leq C \|(y,h,u)\|^{4}_{E},	
	\end{array}
	$$
	$$	
	\begin{array}{ll}
							\phantom{WADD} &
	\displaystyle J^{2}_{3}\leq C\left(\sup_{t\in [0,T]}(\hat{\rho}^{2}_{5}(t)\int_{\Omega}|\nabla \Delta y(t)|^{2}\, dx)\right)\left(\iint_{Q}\hat{\rho}^{2}_{3}|\nabla y_{t}|^{2}\, dxdt\right) \phantom{WAhhhhhhhhhhhEeEdddddeeeeddddddeeeeeeeeeedE} \\
														\noalign{\smallskip}\phantom{WA} 
											\phantom{WHHA} & \phantom{J^{2}_{3}}
	\leq C \|(y,h,u)\|^{4}_{E},
	\end{array}
$$
	$$
	\begin{array}{ll}
					\phantom{WADD} 
 & \displaystyle J^{3}_{3}\leq C\left(\iint_{Q}\hat{\rho}^{2}_{4}|\Delta^{2} y|^{2}\, dxdt\right)\left(\sup_{t\in [0,T]}(\hat{\rho}^{2}_{3}(t)\int_{\Omega}|\nabla y_{t}(t)|^{2}\, dx)\right)\phantom{WAAAAAAAdddhhhhhhhhhdddeeddddeeeeeeeeeeeeddeeedd} \\
									\noalign{\smallskip}\phantom{WA} 
			\phantom{WDDA} &			\phantom{J^{3}_{3}} 
\displaystyle \leq	C \|(y,h,u)\|^{4}_{E},
	\end{array}
	$$
	$$
	\begin{array}{ll}
			\phantom{GGWA}& 
	\displaystyle J^{4}_{3}\leq C \left( \iint_{Q}\hat{\rho}^{2}_{3}|\Delta y_{t}|^{2}\, dxdt\right) \left(\sup_{t\in [0,T]}(\hat{\rho}^{2}_{5}(t)\int_{\Omega}|\nabla \Delta y(t)|^{2}\, dx) \right)\phantom{dddddddddddddddddddddddddddddddddddddddddddddWA}\\
														\noalign{\smallskip}\phantom{WA} 
							\phantom{WAHH} & 	\phantom{J^{4}_{3}}
 \leq C \|(y,h,u)\|^{2}_{E}.
	\end{array}
	$$
	Combining the four estimates of $J^{i}_{3},$ we have
	$$
	J_{3}\leq C\|(y,h,u)\|^{4}_{E}, \,\, \forall (y,h,u)\in E.
	$$
	and this conclude that
	$$
	\iint_{Q}\hat{\rho}^{2}_{1}|A_{1,t}(y,h,u)|^{2}\, dxdt\leq C\left( \|(y,h,u)\|^{2}_{E}+\|(y,h,u)\|^{4}_{E}\right).
	$$
	Now, let us to prove that $\displaystyle \hat{\rho}_{3}\Delta \mathcal{A}_{1}(y,h,u)\in L^{2}(Q), \,\, \forall\, (y,h,u)\in E.$ Indeed, we have
	$$
	\begin{array}{l}
	\displaystyle \iint_{Q}\hat{\rho}^{2}_{3}|\Delta \mathcal{A}_{1}(y,h,u)|^{2}\, dxdt\\
	 	\noalign{\smallskip}\phantom{WWQ}
	\displaystyle=\iint_{Q}\hat{\rho}^{2}_{3}|\Delta y_{t}-\Delta \nabla\cdot \left(a( \nabla y)\nabla y\right)+\Delta (f(y))-\Delta(u\chi_{\omega})|^{2}\, dxdt\\
	 	\noalign{\smallskip}\phantom{WWQ}
\displaystyle 
	\leq C\iint_{Q}\hat{\rho}^{2}_{3}|\mathcal{L}_{1}\Delta y-\Delta(u\chi_{\omega})|^{2}\, dxdt+C\iint_{Q}\hat{\rho}^{2}_{3}\left(|\nabla y|^{2}+| \Delta y|^{2}\right)\, dxdt\\
	 	\noalign{\smallskip}\phantom{WFWQ}
	 \displaystyle + C \iint_{Q}\hat{\rho}^{2}_{3}|\Delta \left[\nabla\cdot \left(a( \nabla y)\nabla y\right)-a(0)\Delta y\right]|^{2}\, dxdt\\
	 	\noalign{\smallskip}\phantom{WWQ}
	=K_{1}+K_{2}+K_{3}.
	\end{array}
	$$
	From definition of Banach space $E$ and Proposition \ref{p-null-c-2}, we have
	$$K_{1} \leq C\|(y,h,u)\|^{2}_{E}\,\, \mbox{and}\,\, K_{2}\leq C\|(y,h,u)\|^{2}_{E}.$$
	Now, let us bound the term $K_{3},$
	$$
	\begin{array}{l}
	\displaystyle K_{3}\leq C\sum_{i,j=1}^{N} \iint_{Q} \hat{\rho}^{2}_{3} \left| \left[\left(a( \nabla y)-a(0)\right) \Delta y\right]_{x_{i}x_{j}} \right|^{2}\, dxdt\\
	 	\noalign{\smallskip}\phantom{WWQ}
\displaystyle 	+C\sum_{i,j=1}^{N}\iint_{Q}\hat{\rho}^{2}_{3} \left|\left[D a(\nabla y)\cdot\nabla y \Delta y\right]_{x_{i}x_{j}} \right|^{2}\, dxdt\\
 	\noalign{\smallskip}\phantom{WQ}
	= K^{1}_{3}+K^{2}_{3}.
	\end{array}
	$$
	Let us denote by
	$$\tilde{K}^{1}_{3}=\left[\left(a(\nabla y)-a(0)\right)\Delta y\right]_{x_{i}x_{j}}\,\,\mbox{and} \,\, \tilde{K}^{2}_{3}=\left[D a(\nabla y)\cdot\nabla y \Delta y\right]_{x_{i}x_{j}}.$$
	Let us calculate each term in $\tilde{K}^{1}_{3}$ and $\tilde{K}^{2}_{3}$
	$$
	\begin{array}{l}
	\displaystyle \tilde{K}^{1}_{3}=D a(\nabla y)\cdot\nabla y_{x_{j}}\Delta y_{x_{i}}+\left(a(\nabla y)-a(0)\right) \Delta y_{x_{i}x_{j}}+\sum_{m,k=1}^{N}D^2_{mk} a( \nabla y)  y_{x_{m},x_{i}}y_{x_{k},x_{j}} \Delta y\\
	 	\noalign{\smallskip}\phantom{QQW}
	\displaystyle +D a(\nabla y)\Delta y_{x_{j}}\cdot\nabla y_{x_{i}}+D a(\nabla y)\Delta y\cdot\nabla y_{x_{i}x_{j}},
	\end{array}
	$$
	and
	$$
	\begin{array}{l}
	\tilde{K}^{2}_{3}=\left(D a(\nabla y)\right)_{x_{i},x_{j}}\cdot\nabla y\Delta y+ \left(Da(\nabla y)\right)_{x_{i}}\cdot\nabla y_{x_{j}} \Delta y+\left(D a( \nabla y)\right)_{x_{i}}\Delta y_{x_{j}}\\
		 	\noalign{\smallskip}\phantom{QQW}
	\displaystyle 
	+\left(D a(\nabla y)\right)_{x_{j}}\cdot\nabla y_{x_{i}}\Delta y+ D a(\nabla y)\cdot\nabla y_{x_{i}x_{j}}\Delta y+D a(\nabla y)\cdot\nabla y_{x_{i}}\Delta y_{x_{j}}\\
		 	\noalign{\smallskip}\phantom{QQW}
	\displaystyle + \left(D a(\nabla y)\right)_{x_{j}}\cdot\nabla y \Delta y_{x_{i}}+ D a(\nabla y)\cdot\nabla y_{x_{j}}\Delta y_{x_{i}}+ D a( \nabla y)\cdot\nabla y\Delta y_{x_{i}x_{j}}.
	\end{array}
	$$
	Due it, we have that
	$$
	\begin{array}{l}
\displaystyle 	K^{1}_{3}=\iint_{Q}\hat{\rho}^{2}_{3}|\tilde{K}^{1}_{3}|^{2}\, dxdt \leq K^{1}_{1,3}+\cdots +K^{1}_{5,3},
	\end{array}
	$$
	and
	$$
	\begin{array}{l}
	\displaystyle K^{2}_{3}=\iint_{Q}\hat{\rho}^{2}_{3}|\tilde{K}^{2}_{3}|^{2}\, dxdt \leq K^{2}_{1,3}+\cdots +K^{2}_{9,3}.
	\end{array}
	$$
	Using Proposition \ref{p-null-c-2} and Remark \ref{est-prop}, we have the following estimates
	$$
	\begin{array}{l}
\displaystyle 	\sum_{i=1}^{5}K^{1}_{i,3}+\sum_{j=1}^{9}K^{2}_{j,3}\leq C\left(\|(y,h,u)\|^{2}_{E} + \|(y,h,u)\|^{4}_{E} + \|(y,h,u)\|^{6}_{E}\right),
	\end{array}
	$$
	and therefore,
	$$\displaystyle K_{3}\leq C\left(\|(y,h,u)\|^{2}_{E} + \|(y,h,u)\|^{4}_{E} + \|(y,h,u)\|^{6}_{E}\right).
$$
	For all this, we conclude that
	$$\hat{\rho}_{3}\Delta \mathcal{A}_{1}(y,h,u)\in L^{2}(Q).$$
	
	\item [ii)] $\displaystyle \mathcal{A}_{2}(y,h,u)\in F_{2}, \,\, \forall\, (y,h,u)\in E$.
	
	To prove this, we use arguments as above, this is
	$$
	\begin{array}{l}
\displaystyle 	\iint_{Q}\rho^{2}|\mathcal{A}_{2}(y,h,u)|^{2}\, dxdt\leq	\iint_{Q}\rho^{2}|\mathcal{L}_{2}h-y\chi_{\omega}|^{2}\, dxdt+C\iint_{Q}\rho^{2}|h|^{2}\,dxdt\\
 	\noalign{\smallskip}\phantom{QQgggAAASSSggggW}
\displaystyle+C\iint_{Q} \rho^{2}|\nabla\cdot \left[Da(\nabla y)(\nabla y\cdot \nabla h)+a(\nabla y)\nabla h-a(0)\nabla h\right] |^{2}\, dxdt\\
 	\noalign{\smallskip}\phantom{QQWAAAAAAAWWQ}
=L_{1}+L_{2}+L_{3}.
	\end{array}
	$$
	Similarly as in the estimates of item $i)$ we will use Proposition \ref{p-null-c-2} and Remark \ref{est-prop}, such that
	$$
	L_{j}\leq C\left(\|(y,h,u)\|^{2}_{E} + \|(y,h,u)\|^{4}_{E} + \|(y,h,u)\|^{6}_{E}\right),\,\ j=1,2,3.
	$$
	Then $\displaystyle \mathcal{A}_{2}(y,h,u)\in F_{2}, \,\, \forall \, (y,h,u)\in E$ holds.
\end{description}
Therefore, $\mathcal{A}$ is well defined.
	
	Furthermore, using similar arguments, it is easy to check that $\mathcal{A}$ is continuous.
\end{proof}
\begin{lemma}\label{lem2}
The mapping $\mathcal{A}: E \rightarrow F$ is continuously differentiable.
\end{lemma}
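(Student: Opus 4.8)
The plan is to verify that $\mathcal{A}$ is Fr\'echet differentiable with a continuous derivative, via the classical criterion: compute the G\^ateaux differential $\mathcal{A}'(y,h,u)$ at an arbitrary point of the ball $B_r(0)\subset E$, check that it is a bounded linear operator from $E$ into $F$, and check that the map $(y,h,u)\mapsto\mathcal{A}'(y,h,u)$ is continuous from $E$ into $\mathcal{L}(E,F)$; these two facts together yield $\mathcal{A}\in C^{1}$. Differentiating \eqref{def-A} formally in a direction $(\bar y,\bar h,\bar u)\in E$ one obtains $\mathcal{A}'(y,h,u)(\bar y,\bar h,\bar u)=(\mathcal{B}_1,\mathcal{B}_2)$ with $\mathcal{B}_1=\mathcal{L}_1\bar y-\bar u\chi_\omega+R_1$, $\mathcal{B}_2=\mathcal{L}_2\bar h-\bar y\chi_{\mathcal{O}}+R_2$, where
$$
\begin{array}{rcl}
R_1 & = & -\nabla\cdot\big[(a(\nabla y)-a(0))\nabla\bar y+(Da(\nabla y)\cdot\nabla\bar y)\,\nabla y\big]+(f'(y)-f'(0))\,\bar y,\\
\noalign{\smallskip}
R_2 & = & -\nabla\cdot\big[(a(\nabla y)-a(0))\nabla\bar h+(Da(\nabla y)\cdot\nabla\bar y)\,\nabla h+Da(\nabla y)(\nabla\bar y\cdot\nabla h+\nabla y\cdot\nabla\bar h)\\
\noalign{\smallskip}
& & +\,(D^2a(\nabla y)\nabla\bar y)(\nabla y\cdot\nabla h)\big]+f''(y)\,h\,\bar y+(f'(y)-f'(0))\,\bar h.
\end{array}
$$
Each term of $R_1$ and $R_2$ is linear in the increment $(\bar y,\bar h,\bar u)$, at most cubic in $(y,h)$, and carries a coefficient — one of $a-a(0)$, $Da$, $D^2a$ evaluated at $\nabla y$, or $f'-f'(0)$, $f''$ evaluated at $y$ — bounded by \eqref{hipotesis}.

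To see that $\mathcal{A}'(y,h,u)\in\mathcal{L}(E,F)$ I would replay the estimates from the proof of Lemma~\ref{lem1}, with one factor in each product replaced by the corresponding barred quantity. The linear contributions $\mathcal{L}_1\bar y-\bar u\chi_\omega$ and $\mathcal{L}_2\bar h-\bar y\chi_{\mathcal{O}}$, together with the $t$-derivative and Laplacian components demanded by the norm of $F_1$, are dominated by $C\|(\bar y,\bar h,\bar u)\|_E$ directly from the definition of $E$ and Proposition~\ref{p-null-c-2}. In the remainders $R_1$, $R_2$ one places, in each product, the highest-order barred derivative in the weighted $L^2(Q)$ norm supplied by Propositions~\ref{p-null-c-1}--\ref{p-null-c-2} and Remark~\ref{est-prop}, and the remaining state factors in $L^\infty$ through the embedding $H^2(\Omega)\hookrightarrow L^\infty(\Omega)$ (valid for $N\le 3$), using the weight comparisons between $\rho$ and the $\hat\rho_i$ (of the type $\rho\le C\hat\rho_i\hat\rho_j$) exactly as in the bounds for $I_3$, $J_3$, $K_3$ in Lemma~\ref{lem1}. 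This produces $\|\mathcal{A}'(y,h,u)(\bar y,\bar h,\bar u)\|_F\le C\,P\big(\|(y,h,u)\|_E\big)\,\|(\bar y,\bar h,\bar u)\|_E$ for a fixed polynomial $P$; being linear in the increment, $\mathcal{A}'(y,h,u)$ is indeed the G\^ateaux derivative at $(y,h,u)$.

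For the continuity of $\mathcal{A}'$ one estimates $\mathcal{A}'(y_1,h_1,u_1)-\mathcal{A}'(y_2,h_2,u_2)$. Its terms split into two families: those carrying a difference of coefficients, $a(\nabla y_1)-a(\nabla y_2)$, $Da(\nabla y_1)-Da(\nabla y_2)$, $D^2a(\nabla y_1)-D^2a(\nabla y_2)$, $f'(y_1)-f'(y_2)$, $f''(y_1)-f''(y_2)$ — controlled in $L^\infty$ by the global Lipschitz bounds of \eqref{hipotesis} together with $\|\nabla y_1-\nabla y_2\|_{L^\infty(\Omega)}\le C\|y_1-y_2\|_{H^2(\Omega)}$ — and those carrying a difference of state factors ($\nabla y_1-\nabla y_2$, $\nabla h_1-\nabla h_2$, $\Delta y_1-\Delta y_2$, and so on), treated as in the previous step. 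In every case one gets a bound by $C\,\widetilde P\big(\|(y_1,h_1,u_1)\|_E,\|(y_2,h_2,u_2)\|_E\big)\,\|(y_1-y_2,h_1-h_2,u_1-u_2)\|_E$ for a polynomial $\widetilde P$; hence $\mathcal{A}'$ is locally Lipschitz and hence continuous, so $\mathcal{A}$ is of class $C^1$ on $B_r(0)$.

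The main obstacle, as already in Lemma~\ref{lem1}, is the top-order component of $F_1$: showing $\hat\rho_3\Delta R_1\in L^2(Q)$. Expanding $\Delta\nabla\cdot[\,\cdot\,]$ produces terms with spatial derivatives of $y$ and of $\bar y$ up to fourth order (for instance $(a(\nabla y)-a(0))\Delta^2\bar y$ and products $(\text{bounded})\cdot\nabla\Delta\bar y\cdot\nabla^2 y$, $(\text{bounded})\cdot\nabla^3 y\cdot\nabla\bar y\cdot\nabla y$, the coefficients being built from the derivatives of $a$ bounded in \eqref{hipotesis}); these are absorbed by the higher-order estimate \eqref{eq-null-4} of Proposition~\ref{p-null-c-2} — which furnishes $\hat\rho_4\Delta^2 y$, $\hat\rho_4\nabla\Delta y$, $\hat\rho_5\nabla\Delta y$ in the relevant norms — together with the membership $\hat\rho_3(\mathcal{L}_1\Delta y-\Delta(u\chi_\omega))\in L^2(Q)$ built into the definition of $E$, following the treatment of $K_3$ in Lemma~\ref{lem1}. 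Once this bookkeeping is arranged, the remaining estimates are of the same weighted-H\"older, multilinear type and demand no new analytic input.
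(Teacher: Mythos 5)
Your proposal follows essentially the same route as the paper: compute the G\^ateaux differential, then show that the derivative map is continuous (indeed locally Lipschitz) from $E$ into $\mathcal{L}(E,F)$ and invoke the classical criterion to conclude $C^1$ regularity; the paper's proof of Lemma~\ref{lem2} is organized in exactly this way, with the continuity step carried out through the decompositions $\tilde{O}_{1,n},\dots,\tilde{O}_{4,n}$ and $\tilde{P}_{1,n},\dots,\tilde{P}_{13,n}$, which match your ``difference of coefficients versus difference of state factors'' splitting. Your explicit formulas for $R_1$ and $R_2$ are the correct linearizations (the paper's displayed $D\mathcal{A}_1$, $D\mathcal{A}_2$ contain sign and index typos that your version silently repairs), and your treatment of the top-order component $\hat\rho_3\Delta R_1$ via \eqref{eq-null-4} mirrors the paper's handling of $K_3$.

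One step is misjustified as written: you assert that, the candidate operator being linear and bounded in the increment, it ``is indeed the G\^ateaux derivative at $(y,h,u)$.'' Linearity and boundedness of a formally differentiated expression do not by themselves establish that it is the directional derivative; one must verify that
$\frac{1}{\lambda}\left[\mathcal{A}\big((y,h,u)+\lambda(\bar y,\bar h,\bar u)\big)-\mathcal{A}(y,h,u)\right]$
converges to it in $F$ as $\lambda\to 0$. The paper does this explicitly (its identity \eqref{D-a} and the estimates of the remainders $\tilde{J}_1,\tilde{J}_2$ and $\tilde{N}_1,\dots,\tilde{N}_8$, which are differences of difference quotients of $a$, $Da$, $f'$ against their derivatives, sent to zero by the $C^3$/$C^2$ regularity in \eqref{hipotesis} and the weighted estimates of Propositions~\ref{p-null-c-1}--\ref{p-null-c-2}). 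The verification is of the same multilinear weighted type you already describe, so nothing in your plan would fail, but this convergence argument must be supplied rather than inferred from linearity.
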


\begin{proof}
Let us first prove that $\mathcal{A}$ is G\^ateaux-differentiable at any  $(y,h,u)\in E$ and calculate the $G$-derivate $\mathcal{A}'(y,h,u).$

For this purpose let us introduce the linear mapping 
$$\displaystyle D\mathcal{A}:E\mapsto F,\,\, \mbox{ with }\,\, \displaystyle D\mathcal{A}(\tilde{y},\tilde{h},\tilde{u})=\left(D\mathcal{A}_{1}(\tilde{y},\tilde{h},\tilde{u}),D\mathcal{A}_{2}(\tilde{y},\tilde{h},\tilde{u})\right),$$
$$
\left\{\begin{array}{l}
D\mathcal{A}_{1}(\tilde{y},\tilde{h},\tilde{u})=\tilde{y}_{t}-\nabla\cdot \left[\left(D a( \nabla y)\cdot\nabla \tilde{y}\right)\nabla y-a(\nabla y)\nabla \tilde{y} \right]-\tilde{u}\chi_{\omega}+f'(y)\tilde{y},\\
 	\noalign{\smallskip}
D\mathcal{A}_{2}(\tilde{y},\tilde{h},\tilde{u})=\tilde{h}_{t}-\nabla\cdot\left[ (D^2 a(\nabla y)\cdot \nabla \tilde{y})(\nabla\tilde{y}\cdot\nabla h)+Da(\nabla y)(\nabla \tilde{y}\cdot \nabla h)\right.\\
	\noalign{\smallskip}\phantom{QQWWWWWQ}
+ D a(\nabla y)(\nabla y\cdot\nabla h) \left. +(D a(\nabla y)\cdot\nabla \tilde{y}) \nabla h+a(\nabla y)\nabla \tilde{h}\right]\\
	\noalign{\smallskip}\phantom{QQWWWWWQ}
\displaystyle + f''(y)\tilde{y}h+f'(y)\tilde{h}-\tilde{y}\chi_{\mathcal{O}}.
\end{array}\right.
$$
To prove that $D\mathcal{A}$ is the $G$-derivative of $\mathcal{A}$ in $(y,h,u),$ we will prove that
\begin{equation}\label{D-a}
\frac{1}{\lambda}\left[\mathcal{A}_{i}\left((y,h,u)+\lambda(\tilde{y},\tilde{h},\tilde{u})\right)-\mathcal{A}_{i}(y,h,u)\right]\to D\mathcal{A}_{i}(\tilde{y},\tilde{h},\tilde{u}),
\end{equation}
strongly in $F_{i}$ for $i=1,2$ as $\lambda \to 0.$ Indeed, we have
$$
\begin{array}{l}
\displaystyle \frac{1}{\lambda}\left[\mathcal{A}_{1}\left((y,h,u)+\lambda(\tilde{y},\tilde{h},\tilde{u}) \right)-\mathcal{A}_{1}(y,h,u) \right]-D\mathcal{A}_{1}(\tilde{y},\tilde{h},\tilde{u})\\

 	\noalign{\smallskip}\phantom{QQDDDDDDWW}
\displaystyle=-\nabla\cdot\left[\left(\frac{a( \nabla y+\lambda\nabla \tilde{y} )-a(\nabla y)}{\lambda}-Da(\nabla y)\cdot\nabla \tilde{y}\right)\nabla y \right]\\

 	\noalign{\smallskip}\phantom{QQWWWDDWWQ}
\displaystyle-\nabla\cdot\left[ \left( a(\nabla y+\lambda \nabla \tilde{y})-a(\nabla y)\right)\nabla \tilde{y}\right]\\

 	\noalign{\smallskip}\phantom{QQWWWGGWWQ}
\displaystyle=\tilde{J}_{1}+\tilde{J}_{2}.
\end{array}
$$
Similarly as in the prove of Lemma \ref{lem1}, we have
$$
\|\tilde{J}_{1}\|_{F_{1}}\to 0\,\,\ \mbox{as}\,\, \lambda \to 0,$$
and 
$$
\|\tilde{J}_{2}\|_{F_{1}}\to 0\,\,\ \mbox{as}\,\, \lambda \to 0.
$$
Then \eqref{D-a} holds for $i=1.$

Now, let us calculate the following term
$$
\begin{array}{l}
\displaystyle \frac{1}{\lambda}\left[\mathcal{A}_{2}\left((y,h,u)+\lambda(\tilde{y},\tilde{h},\tilde{u})\right)-\mathcal{A}_{2}\left(y,h,u\right) \right]-D\mathcal{A}_{2}(\tilde{y},\tilde{h},\tilde{u})\\
 		\noalign{\smallskip}\phantom{SAAAA} \displaystyle = -\nabla\cdot \left[\left(\frac{Da( \nabla y+\lambda \nabla \tilde{y})-Da(\nabla y)}{\lambda}\nabla \tilde{y}- D^2 a(\nabla y)\cdot\nabla \tilde{y} \right)(\nabla y \cdot\nabla h) \right]\\
			\noalign{\smallskip}\phantom{SAAAAA}
\displaystyle 			- \nabla\cdot \left[\left(\frac{a(\nabla y+\lambda \nabla \tilde{y})-a(\nabla y)}{\lambda}-Da(\nabla y)\cdot\nabla\tilde{y}\right)\nabla h\right]\\
			\noalign{\smallskip}\phantom{SAAAAA}
		\displaystyle -\nabla\cdot\left[\left(Da(\nabla y+\lambda \nabla y)-Da(\nabla y) \right)\nabla \tilde{y}\cdot\nabla h \right]+\nabla\cdot\left[\left(Da(\nabla+ \lambda \nabla \tilde{y})-Da(\nabla y)\right)\nabla y\cdot\nabla \tilde{h}\right]\\
	\noalign{\smallskip}\phantom{SAAAAA}
\displaystyle -\nabla\cdot \left[\left(a(\nabla y+\lambda \nabla \tilde{y})-a(\nabla y)\right)\nabla \tilde{h}\right]-\lambda \nabla\cdot\left[Da(\nabla y+\lambda \nabla \tilde{y})(\nabla \tilde{y}\cdot\nabla \tilde{h})\right]\\
	\noalign{\smallskip}\phantom{SAAAAA}
	\displaystyle +\left(f'(y+\lambda\tilde{y})-f'(y) \right)\tilde{h}+\left(\frac{f'(y+\lambda\tilde{y})-f'(y)}{\lambda}-f''(y)\tilde{y} \right)h\\
		\noalign{\smallskip}\phantom{SAAAASA}
		=\tilde{N}_{1}+\cdots+\tilde{N}_{8}.
\end{array}
$$
Using Proposition \ref{p-null-c-2} and Remark \ref{est-prop}, we have that
$$\|\tilde{N}_{j}\|_{F_{2}}\to 0 \,\,\mbox{as}\,\, \lambda\to 0, \, \mbox{for}\, j=1,...,8.$$
Then \eqref{D-a} holds for $i=2.$

Therefore, $\mathcal{A}$ is G\^ateaux-differentiable.

Now, let us check that $\mathcal{A}\in C^{1}(E;F)$ with $\mathcal{A}'(y,h,u)=D_{G}\mathcal{A}(y,h,u)$ i.e
$$
\mathcal{A}'(y,h,u)(\tilde{y},\tilde{h},\tilde{u})=D_{G}\mathcal{A}(y,h,u)(\tilde{y},\tilde{h},\tilde{u}).
$$
But this last equality is equivalent to prove that there exists $\epsilon_{n}(y,h,u)$ such that
\begin{equation}\label{D-iq}
	\|\left(D_{G}\mathcal{A}(y^{n},h^{n},u^{n})-D_{G}\mathcal{A}(y,h,u)\right)(\tilde{y},\tilde{h},\tilde{u})\|^{2}_{F} \leq \epsilon_{n} \|(\tilde{y},\tilde{h},\tilde{u})\|^{2}_{E},
\end{equation}
for all $(\tilde{y},\tilde{h},\tilde{u})\in E$ and $\displaystyle \lim_{n\to \infty}\epsilon_{n}=0.$

Let us to prove \eqref{D-iq}, 
$$
\begin{array}{l}
D_{G}\mathcal{A}_{1}(y^{n},h^{n},u^{n})(\tilde{y},\tilde{h},\tilde{u})-D_{G}\mathcal{A}_{1}(y,h,u)(\tilde{y},\tilde{h},\tilde{u})\\
 		\noalign{\smallskip}\phantom{SAAADDD}
\displaystyle =-\nabla\cdot \left[\left(Da(\nabla y)\cdot\nabla \tilde{y}\right)(\nabla y^{n}-\nabla y) \right]-\nabla\cdot \left[\left(\left(Da(\nabla y^{n})-Da(\nabla y)\right)\cdot\nabla \tilde{y}\right)\nabla y \right]\\
 		\noalign{\smallskip}\phantom{SAAAADDD}
\displaystyle +\nabla\cdot\left[\left(a(\nabla y^{n})-a(\nabla y)\right)\nabla \tilde{y}\right]+ \left(f'(y^{n})-f'(y)\right)\tilde{y}\\
 		\noalign{\smallskip}\phantom{SAAADDD}
=\tilde{O}_{1,n}+\tilde{O}_{2,n}+\tilde{O}_{3,n}+\tilde{O}_{4,n}.
\end{array}
$$
By properties of the functions $a(\cdot)$, $f(\cdot)$, Proposition \ref{p-null-c-2} and Remark \ref{est-prop}, we have
$$
\displaystyle \|\tilde{O}_{j,n}\|_{F_{1}}\leq \epsilon^{1}_{j,n} \|(\tilde{y},\tilde{h},\tilde{u})\|_{E},\, \mbox{with}\, \, \lim_{n\to \infty}\epsilon^{1}_{j,n}=0,\, j=1,..,4.
$$
All this implies that
\begin{equation}\label{D-iq1}
	\|\left(D_{G}\mathcal{A}_{1}(y^{n},h^{n},u^{n})-D_{G}\mathcal{A}_{1}(y,h,u)\right)(\tilde{y},\tilde{h},\tilde{u})\|_{F_{1}} \leq \left(\sum_{j=1}^{4}\epsilon_{j,n}^1 \right)\|(\tilde{y},\tilde{h},\tilde{u})\|_{E}.
\end{equation}
Continuing, let us calculate a similar expression for $\mathcal{A}_{2}$,
$$
\begin{array}{l}
\left(D_{G}\mathcal{A}_{2}(y^{n},h^{n},u^{n})-D_{G}\mathcal{A}_{2}(y,h,u)\right)(\tilde{y},\tilde{h},\tilde{u})\\
 		\noalign{\smallskip}\phantom{DD}
=-\nabla\cdot\left[\left(\left(D^2 a(\nabla y^{n})-D^2 a( \nabla y)\right)\cdot\nabla \tilde{y}\right)(\nabla y^{n}\cdot\nabla h^{n}) \right]\\
 		\noalign{\smallskip}\phantom{SAA}
-\nabla\cdot\left[D^2 a( \nabla y)\cdot\nabla \tilde{y}\left(\nabla y^{n}-\nabla y\right)\cdot\nabla h^{n} \right]-\nabla\cdot\left[\left(Da(\nabla y^{n})-Da(\nabla y)\right)(\nabla \tilde{y}\cdot\nabla h^{n}) \right]\\
 		\noalign{\smallskip}\phantom{SAA}
        -\nabla\cdot\left[ \left(D^2 a( \nabla y)\cdot\nabla\tilde{y}\right)(\nabla y\cdot(\nabla h^{n}-\nabla h))\right]-\nabla\cdot\left[\left(Da(\nabla y^{n})-Da(\nabla y)\right)\left(\nabla y^{n}\cdot\nabla\tilde{h}\right)\right]\\
 		\noalign{\smallskip}\phantom{SAA}
-\nabla\cdot\left[Da(\nabla y)\left(\left(\nabla y^{n}-\nabla y\right)\cdot\nabla\tilde{h}\right)\right]
 		-\nabla \cdot\left[\left(Da(\nabla y^{n})-Da(\nabla y)\right)\nabla \tilde{y}\cdot \nabla h^{n} \right]\\
 		\noalign{\smallskip}\phantom{SAA}
        -\nabla \cdot\left[Da(\nabla y)\left(\nabla \tilde{y}\cdot\nabla (h^{n}-h)\right) \right]
-\nabla\cdot\left[\left(a(\nabla y^{n})-a(\nabla y)\right)\nabla \tilde{h}\right]\\
 		\noalign{\smallskip}\phantom{SAA}
        -\nabla \cdot\left[\left(Da(\nabla y)\cdot\nabla \tilde{y}\right)\nabla (h^{n}-h) \right] +\left(f''(y^{n})-f''(y)\right)\tilde{y}h^{n}\\
 		\noalign{\smallskip}\phantom{SAA}
        +f''(y)\tilde{y}\left(h^{n}-h\right)+\left(f'(y^{n})-f'(y)\right)\tilde{h}\\
 		\noalign{\smallskip}\phantom{AA}
=\tilde{P}_{1,n}+\cdots+\tilde{P}_{13,n}.
\end{array}
$$
Thanks to Proposition \ref{p-null-c-2} and Remark \ref{est-prop}, we have
$$
\displaystyle \|\tilde{P}_{j,n}\|_{F_{2}} \leq \epsilon^{2}_{j,n} \|(\tilde{y},\tilde{h},\tilde{u})\|_{E}, \,\,  j=1,...,13\,\,\mbox{with}\,\, \lim_{n\to \infty}\epsilon^{2}_{j,n}=0.
$$
Then
\begin{equation}\label{D-iq2}
	\|\left(D_{G}\mathcal{A}_{2}(y^{n},h^{n},u^{n})-D_{G}\mathcal{A}_{2}(y,h,u)\right)(\tilde{y},\tilde{h},\tilde{u})\|_{F_{2}}\leq \left(\sum_{j=1}^{13}\epsilon^{2}_{j,n} \right) \|(\tilde{y},\tilde{h},\tilde{u})\|_{E}.
\end{equation}
From \eqref{D-iq1}-\eqref{D-iq2}, we have that \eqref{D-iq} holds.
 
This ends the proof.
\end{proof}

\begin{lemma}\label{lem3}
Let $\mathcal{A}$ be the mapping defined by $(\ref{def-A})$. Then $\mathcal{A}'(0,0,0)$ is onto. 
\end{lemma}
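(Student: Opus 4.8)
The plan is to compute $\mathcal A'(0,0,0)$ explicitly, observe that it coincides with the linear operator governing system~\eqref{linear-1}, and then read off surjectivity from Propositions~\ref{p-null-c-1} and~\ref{p-null-c-2}. Concretely, I would evaluate the derivative $D\mathcal A$ obtained in Lemma~\ref{lem2} at the point $(y,h,u)=(0,0,0)$: there $\nabla y\equiv 0$ and $h\equiv 0$, so every summand carrying a factor $\nabla y$, $\nabla h$ or $h$ vanishes, and using that $a(0)$, $Da(0)$, $D^2a(0)$ are constants and $f'(0)=A$, one is left with
\[
\mathcal A'(0,0,0)(\tilde y,\tilde h,\tilde u)=\bigl(\mathcal L_1\tilde y-\tilde u\chi_\omega,\ \mathcal L_2\tilde h-\tilde y\chi_{\mathcal O}\bigr),
\]
where $\mathcal L_1\tilde y=\tilde y_t-a(0)\Delta\tilde y+A\tilde y$ and $\mathcal L_2\tilde h=-\tilde h_t-a(0)\Delta\tilde h+A\tilde h$. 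Hence the identity $\mathcal A'(0,0,0)(\tilde y,\tilde h,\tilde u)=(g_1,g_2)$ means precisely that $(\tilde y,\tilde h,\tilde u)$ is a state--control triple for the linear cascade system~\eqref{linear-1} with source terms $(g_1,g_2)$, and proving that $\mathcal A'(0,0,0)$ is onto amounts to showing that for every $(g_1,g_2)\in F$ one can find such a triple lying in the space $E$.

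Given $(g_1,g_2)\in F=F_1\times F_2$, I would next check that the hypotheses of Propositions~\ref{p-null-c-1} and~\ref{p-null-c-2} hold: by definition of $F$ one has $\rho g_1,\ \hat\rho_1 g_{1,t}\in L^2(Q)$, $\hat\rho_3 g_1\in L^2(0,T;H^2(\Omega)\cap H^1_0(\Omega))$ and $\rho g_2\in L^2(Q)$, and elementary pointwise comparisons among the weights (they differ from one another only by powers of $\xi^\ast$, which remain in a compact subset of $(0,\infty)$ on $[T/2,T]$, while near $t=0$ the stronger exponential $e^{(5/2)s\beta^\ast}$ dominates any such power) give in particular $\rho g_1,\ \rho_0 g_2\in L^2(Q)$ and $\hat\rho_4 g_1\in L^2(0,T;H^2(\Omega)\cap H^1_0(\Omega))$. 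Proposition~\ref{p-null-c-1} then produces a control $\tilde u\in L^2(\omega\times(0,T))$ with $\mathrm{supp}\,\tilde u\subset\omega\times[0,T]$ and the associated solution $(\tilde y,\tilde h)$ of~\eqref{linear-1}, satisfying the estimates~\eqref{eq-null-1}--\eqref{est-2}; since $\rho_0$ and $\hat\rho_0$ blow up as $t\to 0^+$, these already force $\tilde y(\cdot,0)=0$ and $\tilde h(\cdot,0)=0$ in $\Omega$, while $\tilde y|_\Sigma=\tilde h|_\Sigma=0$ and $\tilde h(\cdot,T)=0$ by construction. Proposition~\ref{p-null-c-2} then adds the higher-order weighted estimates~\eqref{eq-null-3}--\eqref{eq-null-4}.

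Finally I would verify that $(\tilde y,\tilde h,\tilde u)\in E$, i.e.\ that each quantity entering the norm of $E$ is finite and bounded by $\|(g_1,g_2)\|_F^2$. The four equation--residual terms are immediate: since $\mathcal L_1$ commutes with $\partial_t$ and $\Delta$, one has $\mathcal L_1\tilde y-\tilde u\chi_\omega=g_1$, $\mathcal L_1\tilde y_t-\tilde u_t\chi_\omega=g_{1,t}$, $\mathcal L_1\Delta\tilde y-\Delta(\tilde u\chi_\omega)=\Delta g_1$ and $\mathcal L_2\tilde h-\tilde y\chi_{\mathcal O}=g_2$, whose $\rho$-, $\hat\rho_1$-, $\hat\rho_3$- and $\rho$-weighted $L^2$-norms are exactly the summands of $\|(g_1,g_2)\|_F$; the terms $\|\rho_0\tilde y\|_{L^2(Q)}$, $\|\rho_0\tilde h\|_{L^2(Q)}$ are controlled by~\eqref{eq-null-1}; the bounds on $\tilde y_{x_i}$ and $\tilde y_{x_ix_j}$ in $L^2(Q)$ follow from~\eqref{eq-null-2}--\eqref{est-2}, because $\hat\rho_0,\hat\rho_1$ are bounded below by a positive constant, together with elliptic regularity on $\Omega$ (using $\tilde y|_\Sigma=0$) to pass from $\Delta\tilde y$ to the full Hessian; and the weighted norms of $\tilde u$, $\tilde u_t$, $\Delta\tilde u$ on $\omega\times(0,T)$ are dominated via~\eqref{eq-null-1} and the same weight comparisons. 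Collecting these bounds gives $(\tilde y,\tilde h,\tilde u)\in E$ with $\mathcal A'(0,0,0)(\tilde y,\tilde h,\tilde u)=(g_1,g_2)$, hence $\mathcal A'(0,0,0)$ is onto. The only genuine work is this last bookkeeping of weights; the spaces $E$, $F$ and the exponents in $\rho,\rho_0,\rho_1,\hat\rho_k$ were designed precisely so that the Carleman-based estimates of Propositions~\ref{p-null-c-1}--\ref{p-null-c-2} close the loop, so I expect no analytic difficulty beyond carefully matching powers of $\xi^\ast$.
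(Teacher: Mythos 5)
Your proof is correct and follows essentially the same route as the paper: identify $\mathcal{A}'(0,0,0)$ with the linear cascade operator of system~\eqref{linear-1}, then invoke Propositions~\ref{p-null-c-1} and~\ref{p-null-c-2} to produce a state--control triple lying in $E$. You additionally spell out the evaluation of the derivative at the origin and the weight-comparison bookkeeping needed to check membership in $E$, which the paper leaves implicit.
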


\begin{proof}
Let us fix $(g_{1},g_{2}) \in F$. From Proposition $\ref{p-null-c-1}$, we know that there exists $(y,h,u)$ satisfying $\eqref{linear-1}$, \eqref{eq-null-1}, \eqref{eq-null-2} and \eqref{est-2}. Also, on other hand we have from Proposition \ref{p-null-c-2} that  $y$ satisfies \eqref{eq-null-3} and \eqref{eq-null-4}. Therefore this $(y,h,u)$ found belong the space $E$ and consequently $\mathcal{A}'(0,0,0)(y,h,u)=(g_{1},g_{2})$.

This end the proof.
\end{proof}
In accordance with Lemmas $\ref{lem1}$, $\ref{lem2}$ and $\ref{lem3}$, we can apply Liusternik's Theorem $($Theorem $\ref{Liu-1})$ and deduce that, there exists $\epsilon>0$, a mapping $W: B_\epsilon(0)\subset F \to E$ such that 
$$W(g_{1},g_{2}) \in B_r(0) \ \ \text{and} \ \ \mathcal{A}(W(g_{1},g_{2}))=(g_{1},g_{2}), \, \ \ \forall (g_{1},g_{2}) \in B_\epsilon(0).$$
In particular, taking  $\epsilon_{0}<\epsilon$  sufficiently small  and  $(\xi,0) \in B_{\epsilon_{0}}(0)$, there exists $(y,h,u)\in E$ such that  $(y,h,u)=W(\xi,0) \in E$, we have 
$$\mathcal{A}((y, h, u))=(\xi,0),$$
thus, we prove that $(\ref{EC1})$ is null locally controllable at time $t=0$.

\subsection{Proof of Theorem~\ref{main-t}}

From   the  locally null controllability of  system \eqref{eq-char}, we have  that  there  exist $\delta>0$  sufficiently small  and  $\tilde{M}$ such  that  if  
\begin{equation}\label{T-ESTIMATE}
    \displaystyle \left\|e^{\frac{\tilde{M}}{t}}\ \xi \right\|_{X_{0}}<\delta,
\end{equation}
then  one  can find  a  control  $u$  with $\mathrm{supp}\ u \subset \omega\times[0,T]$  such that  the state  associated $(y,h)$  satisfies  $\displaystyle h(x,0)=0$ in $\Omega$. Thanks to \eqref{eq-null-1}, \eqref{T-ESTIMATE} and Proposition \ref{char-e},  this  implies  that  the   control  $u$  insensitizes  the  functional $\Phi$  in the sense of Definition \ref{def-ins}.

This  ends  the  proof.


\section{Some Additional Comments and Questions}\label{sec5}

\setcounter{equation}{0}

\subsection{Insensitizing controls for a quasi-linear parabolic equation with diffusion depending on gradient of the state in one dimension}
When $N=1$, we have that  the system \eqref{EC1} can be rewritten the following way
\begin{equation}\label{EC1-N1}
\left\{\begin{array}{lll}
  y_t -  \left(a( y_{x})  y_{x}\right)_x+f(y)= \xi+u\chi_{\omega}  & \text{in}& \ I\times (0,T),\\
  y(0,t)= 0,\, y(1,t)=0 &  \text{on} &\ (0,T),\\
  y(x,0) = y_{0}(x)+\tau \hat{y}_{0}(x)&  \text{in} &\ I,\\
\end{array}\right.
\end{equation}
here, $I=(0,1),\,\, a\in C^{2}(\mathbb{R})$ and $f\in C^{2}(\mathbb{R})$ with $f(0)=0$ satisfying
\begin{equation}\label{hy-a1}
\begin{array}{l}
a_{0}\leq a(r)\leq a_{1},\, |a'(r)|+|a''(r)|+|f'(r)|+f''(r)|\leq M, \,\, \forall \, r\in \mathbb{R}.
\end{array}
\end{equation}

We have the following result:

\begin{theorem}\label{main-1d}
Assume $\omega \cap\mathcal{O}\neq \emptyset$ and $y_{0}=0.$ Then, there exist two positive constants $\tilde{M}$ and $\delta$ depending only on $I,\, T,\, M,\, a_{0}$ and $a_{1}$ such that for any $\xi\in H^{1}(0,T;L^{2}(I))$ satisfying
$$
\left\|e^{\frac{\tilde{M}}{t}}\ \xi \right\|_{H^{1}(0,T;L^{2}(\Omega))} \leq\delta,
$$
one can find a control function $u\in H^{1}(0,T;L^{2}(\omega)),$ which insensitizes the functional $\Phi$. 
\end{theorem}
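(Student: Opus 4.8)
The plan is to mimic the proof of Theorem~\ref{main-t} in the simpler one-dimensional setting, where the diffusion coefficient $a=a(y_x)$ depends only on the scalar derivative $y_x$. First I would invoke the one-dimensional analogue of Proposition~\ref{char-e}: the insensitizing problem for \eqref{EC1-N1} is equivalent to the null controllability at $t=0$ of the cascade system obtained by coupling the state equation with its adjoint-type equation for the auxiliary variable $h$, namely
\begin{equation*}
\left\{\begin{array}{lll}
y_{t}-(a(y_x)y_x)_x+f(y)=\xi+u\chi_{\omega} & \mbox{in} & I\times(0,T),\\
-h_{t}-\left(a'(y_x)(y_x h_x)+a(y_x)h_x\right)_x+f'(y)h=y\chi_{\mathcal{O}} & \mbox{in} & I\times(0,T),\\
y=h=0 & \mbox{on} & (0,T),\\
y(x,0)=0,\ h(x,T)=0 & \mbox{in} & I,
\end{array}\right.
\end{equation*}
with the requirement $h(x,0)=0$ in $I$. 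The linearization around the origin is exactly \eqref{linear-1} with $a(0)=a(0)$, $A=f'(0)$ and $N=1$, so Propositions~\ref{carl-1}, \ref{carl-2} and \ref{p-null-c-1} apply verbatim, giving a controlled trajectory $(y,h,u)$ for the linear system with the weighted estimates \eqref{eq-null-1}--\eqref{est-2}.

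The second step is the functional-analytic setup. Because $N=1$ and the nonlinear terms only involve $a(y_x)$, $a'(y_x)$, $a''(y_x)$ and one spatial derivative fewer than in the $N=3$ case, the appropriate spaces are lighter: I would take
$$
E=\Big\{(y,h,u):\ \rho_0 y,\ \rho_0 h,\ y_x,\ y_{xx}\in L^2(I\times(0,T));\ \rho u,\ \hat{\rho}_1 u_t\in L^2(\omega\times(0,T)),\ \rho(\mathcal{L}_1 y-u\chi_\omega),\ \hat{\rho}_1(\mathcal{L}_1 y_t-u_t\chi_\omega),\ \rho(\mathcal{L}_2 h-y\chi_{\mathcal{O}})\in L^2(I\times(0,T)),\ y|_{\partial I}=h|_{\partial I}=0,\ h(T)=0\Big\},
$$
and $F=F_1\times F_2$ with $F_1=\{g:\ \rho g,\ \hat{\rho}_1 g_t\in L^2(I\times(0,T))\}$ and $F_2=\{g:\ \rho g\in L^2(I\times(0,T))\}$. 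One only needs the parts of Propositions~\ref{p-null-c-1} and \ref{p-null-c-2} that control $y$ up to two spatial derivatives and one time derivative (i.e.\ \eqref{eq-null-1}--\eqref{eq-null-3}), since in one space dimension $H^1(I)\hookrightarrow L^\infty(I)$, so $\|y_x\|_{L^\infty}$ is controlled by $\|y_{xx}\|_{L^2}$ plus lower order; no $H^4$-type estimate \eqref{eq-null-4} is required. With these choices the source term $\xi\in H^1(0,T;L^2(I))$ satisfying the weighted smallness condition lies in $F$ as $(\xi,0)$, exactly because $\rho\xi,\hat\rho_1\xi_t\in L^2$ is guaranteed by $\|e^{\tilde M/t}\xi\|_{H^1(0,T;L^2)}\le\delta$ for $\tilde M$ large enough, since $\rho,\hat\rho_1$ are dominated by $e^{\tilde M/t}$ near $t=0$.

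The third step is to define $\mathcal{A}=(\mathcal{A}_1,\mathcal{A}_2):E\to F$ by
\begin{equation*}
\left\{\begin{array}{l}
\mathcal{A}_1(y,h,u)=y_t-(a(y_x)y_x)_x+f(y)-u\chi_\omega,\\
\mathcal{A}_2(y,h,u)=-h_t-\left(a'(y_x)(y_x h_x)+a(y_x)h_x\right)_x+f'(y)h-y\chi_{\mathcal{O}},
\end{array}\right.
\end{equation*}
and verify, by the three-lemma scheme of Section~\ref{sec4}, that $\mathcal{A}$ is well defined and $C^1$ from $E$ to $F$ (Lemmas~\ref{lem1}--\ref{lem2} adapted to $N=1$) and that $\mathcal{A}'(0,0,0)$ is onto (Lemma~\ref{lem3}, which is just Proposition~\ref{p-null-c-1} plus the relevant estimates of Proposition~\ref{p-null-c-2}). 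Then Liusternik's theorem (Theorem~\ref{Liu-1}) yields, for $\|(\xi,0)\|_F\le\epsilon_0$ small, a triple $(y,h,u)\in E$ with $\mathcal{A}(y,h,u)=(\xi,0)$; by construction $h(x,0)=0$ because $h\in E$, and by the one-dimensional Proposition~\ref{char-e} the control $u\in H^1(0,T;L^2(\omega))$ insensitizes $\Phi$, with $\delta$ and $\tilde M$ depending only on $I,T,M,a_0,a_1$.

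The main obstacle is establishing that $\mathcal{A}$ maps $E$ into $F$ and is $C^1$ there; concretely, one must show $\hat\rho_1\,\partial_t\mathcal{A}_1(y,h,u)\in L^2$, which forces estimates on $\rho^2|(a(y_x)y_x)_{xt}-a(0)y_{xt}|^2$ and hence on products like $|y_x|^2|y_{xx}|^2|y_{xt}|^2$ and $|y_{xx}|^2|y_{xt}|^2$. These are handled exactly as the terms $J_3^i$ in the proof of Lemma~\ref{lem1}, using $\rho\le C\hat\rho_i\hat\rho_j$, the embedding $H^1(I)\hookrightarrow L^\infty(I)$, and the supremum-in-time bounds of Propositions~\ref{p-null-c-1}--\ref{p-null-c-2}; since $N=1$ the number of such terms and the order of derivatives are strictly smaller than in the main theorem, so the estimates, while tedious, are routine. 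The differentiability step (Lemma~\ref{lem2} analogue) is where one must be most careful with the difference quotients of $a$, $a'$ and their compositions with $\nabla y^n\to\nabla y$ in the appropriate weighted norms, but again the one-dimensional structure makes every term a strict sub-case of those already treated.
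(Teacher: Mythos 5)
Your proposal follows essentially the same route as the paper: reduction to null controllability of the 1D cascade system, linearization and the Carleman/weighted estimates of Propositions \ref{carl-1}--\ref{p-null-c-2}, the lighter spaces $\tilde{E}$, $\tilde{F}$ requiring only two spatial derivatives, and Liusternik's theorem, with the embedding $H^{1}(I)\hookrightarrow L^{\infty}(I)$ doing the work that $H^{2}(\Omega)\hookrightarrow L^{\infty}(\Omega)$ does in the main theorem. The paper itself only sketches this adaptation in Section \ref{sec5}, and your identification of the key estimate (the weighted bound on $\left[(a(y_x)-a(0))y_x\right]_x$ and its time derivative) matches the paper's own emphasis.
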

The proof can be argued as in Section \ref{sec3}. Indeed, the insensitizing problem is equivalent to null controllability for the following system
\begin{equation}\label{equi-C1}
\left\{\begin{array}{lll}
y_{t}- \left(a(y_{x}) y_{x}\right)_{x}+f(y)=\xi+u\chi_{\omega} & \mbox{in} &I\times (0,T),\\
\noalign{\smallskip}
-h_{t}-\left[\left(a'( y_{x}) y_{x}\right) h_{x}+a(y_{x}) h_{x}\right]_{x} +f'(y)h=y\chi_{\mathcal{O}} &\mbox{in} &I\times (0,T),\\
\noalign{\smallskip}
y(0,t)=y(1,t)=0,\, h(0,t)=h(1,t)=0  &\mbox{on} & (0,T),\\
\noalign{\smallskip}
y(x,0)=0,\, h(x,T)=0 &\mbox{in} & I,
\end{array}\right.
\end{equation}
and to guarantee that the system \eqref{equi-C1} be null controllable (for instance, see \cite{Mig}), we consider the linearized system
\begin{equation}\label{equi-L1}
\left\{\begin{array}{lll}
y_{t}-a(0) y_{xx}+Ay=u\chi_{\omega}+F_{1} & \mbox{in} &I\times (0,T),\\
\noalign{\smallskip}
-h_{t}-a(0) h_{xx}+Ah=y\chi_{\mathcal{O}}+F_{2} &\mbox{in} & I\times(0,T),\\
\noalign{\smallskip}
y(0,t)=y(1,t)=0,\, h(0,t)=h(1,t)=0  &\mbox{on} & (0,T),\\
\noalign{\smallskip}
y(x,0)=0,\, h(x,T)=0 &\mbox{in} & I.
\end{array}\right.
\end{equation}
Arguing as in Section \ref{sec4}, we can establish  the null controllability for \eqref{equi-L1} with a state-control $(y,h,u)$ satisfying \eqref{eq-null-2} and \eqref{eq-null-3}. Then, we can introduce the Banach space
$$
\begin{array}{l}
\tilde{E}=\Big\{ (y,h,u):\, \rho y,\, \rho h,\, y_{x},\, y_{xx}\, \in L^{2}(Q);\, \rho u,\, \hat{\rho}_{1}u_{t}\in L^{2}(\omega\times (0,T)),\Big.\\
\noalign{\smallskip}\phantom{QQQ}
\displaystyle \, \rho (\mathcal{L}_{1}y-u\chi_{\omega}),\, \hat{\rho}_{1}(\mathcal{L}_{1}y_{t}-u_{t}\chi_{\omega}),\, \rho (\mathcal{L}_{2}h-y\chi_{\mathcal{O}})\in L^{2}(Q),\\
\noalign{\smallskip}\phantom{QQQ}
\displaystyle
\, \Big. \, y|_{\Sigma}=0,\, h|_{\Sigma}=0,\, h(T)= 0\Big\},
\end{array}
$$
and the norm in $\tilde{E}$ is 
$$
\begin{array}{l}
	\|(y,h,u)\|^{2}_{\tilde{E}}\\
 		\noalign{\smallskip}\phantom{SAAAD}
\displaystyle = \|\rho y\|^{2}_{L^{2}(Q)} + \|\rho h\|^{2}_{L^{2}(Q)} + \|\hat{\rho}u\|^{2}_{L^{2}(Q)}+ \|\rho\left(\mathcal{L}_{1}y-u\chi_{\omega}\right)\|^{2}_{L^{2}(Q)}\\
 		\noalign{\smallskip}\phantom{SAAAD}
\displaystyle + \|\hat{\rho}\left(\mathcal{L}_{1}y_{t}-u_{t}\chi_{\omega}\right)\|^{2}_{L^{2}(Q)} + \|\hat{\rho}\left(\mathcal{L}_{2}h-y\chi_{\cal O}\right)\|^{2}_{L^{2}(Q)}.
\end{array}
$$
Also, let us define the following  Banach space
$$
\begin{array}{l}
\tilde{F}_{1}=\Big\{ w:\, \rho w,\, \hat{\rho}_{1}w_{t}\in L^{2}(Q)\Big\},\\
 		\noalign{\smallskip}
\tilde{F}_{2}=\Big\{ w:\, \rho w\in L^{2}(Q)\Big\},\\
 		\noalign{\smallskip}
\tilde{F}=\tilde{F}_{1}\times \tilde{F}_{2},
\end{array}
$$
with its norm
$$\|(f,g)\|^{2}_{\tilde{F}}=\|\rho f\|^{2}_{L^{2}(Q)}+\|\hat{\rho}_1 f_{t}\|^{2}_{L^{2}(Q)}+\|\rho g\|^{2}_{L^{2}(Q)},$$
and the nonlinear mapping $\displaystyle \mathcal{A}:\tilde{E}\mapsto \tilde{F},$ with $\displaystyle \tilde{A}(y,h,u)=\left(\mathcal{A}_{1}(y,h,u),\mathcal{A}_{2}(y,h,u) \right)$ where
\begin{equation}\label{equiv-A}
\left\{\begin{array}{l}
\mathcal{\tilde{A}}_{1}(y,h,u)=y_{t}-\left(a( y_{x})y_{x}\right)_{x}+f(y)-u\chi_{\omega},\phantom{QQQQQqq}\\
\noalign{\smallskip}
\mathcal{\tilde{A}}_{2}(y,h,u)=-h_{t}-\left[\left(a'( y_{x}) y_{x}+a( y_{x})\right) h_{x} \right]_{x}+f'(y)h-y\chi_{\mathcal{O}}.
\end{array}\right.
\end{equation}
Then, we can show that Lemmas \ref{lem1}, \ref{lem2} and \ref{lem3} holds. All this is due  to similar arguments as in the Section \ref{sec4} and  thanks to $H^{1}(I)\hookrightarrow L^{\infty}(I)$. Therefore, $\mathcal{\tilde{A}}$ is $C^{1}$ with $\mathcal{\tilde{A}}'(0,0,0)$ onto. In particular  the equation 
$$
\mathcal{\tilde{A}}(y,h,u)=(\xi,0),\, (y,h,u)\in \tilde{E},
$$
is solvable and \eqref{equi-C1} is locally null controllable.

\begin{remark}
In the unidimensional case, we can prove the same result of Theorem \ref{main-t} but assuming less regularity in the initial condition $\xi$. It  is true, because  when $N=1$  we have  $H^{1}(I)\hookrightarrow L^{\infty}(I)$    and that makes it easy to obtain the inequality
\begin{equation}\label{A-eq1d}
	\|\left[\left(a(y_{x})-a(0)\right)y_{x}\right]_{x}\|_{\tilde{F}_{1}} \leq C \|(y,h,u)\|_{\tilde{E}},
\end{equation}
where  $C$  is  a  positive constant  and,
\begin{equation}\label{A-eq1d2}
\tilde{\cal A}\,\,\,\mbox{  satisfies  the  hypothesis of Liusternik's Theorem.}
\end{equation}
 The estimate \eqref{A-eq1d} is very important to prove Theorem \ref{main-1d}.
\end{remark}

\subsection{Insensitizing controls for the system \eqref{EC1} when  $N\geq 4$ }

When $N\geq 4,$ if we apply the same techniques, we need estimate the following term
$$\left\|\nabla \cdot \Big[\left(a( \nabla y)-a(0)\right)\nabla y \Big]\right\|_{F_{1}}\leq C \|(y,h,u)\|_{E},$$ 
for the function $\mathcal{A}$ is well defined  and obtain the null controllability for the system \eqref{EC1}, but it is very difficult, because   in the  prove  to Theorem \ref{main-t}  we use  the immersion $\displaystyle H^{2}(\Omega)\hookrightarrow L^{\infty}(\Omega)$  and  this  only  is  valid  when $N=1,\, 2$ or $3$. So, this question is open.

\subsection{Numerical Results}

The  strategy we have  followed  in the  proof  of  Theorem \ref{main-t} opens  the  possibility  to solve  numerically  the insensitizing control problem of $\Phi$ for \eqref{EC1}. Indeed, it is  completely  natural  to introduce  algorithms  of  the  quasi-Newton  kind  to compute   (an  approximation  in time  and  space  of)  a solution  to the null controllability of \eqref{eq-char}. And so solve the  insensitizing control problem of $\Phi$ for \eqref{EC1}. 

These  ideas  have  already  been  applied  in \cite{Santa1} and \cite{Irene-j}  (among  other  references) in  the context  of the  controllability  of  other  equations  and  systems.


\backmatter

\begin{appendices}

\section{Well-possessedness  for  the system \eqref{EC1}}\label{Apex_1}
\setcounter{equation}{0}
Consider the following system:
\begin{equation}\label{A1}
\left\{
\begin{array}{lll}
y_{t}-\nabla\cdot\left( a(\nabla y)\nabla y\right)+f(y)=g & \mbox{in} &Q,\\
	\noalign{\smallskip} 
y(x,t)=0 & \mbox{on} & \Sigma,\\
	\noalign{\smallskip} 
y(x,0)=y_{0} &\mbox{in} & \Omega. 
\end{array}
\right.
\end{equation}
In order to prove the existence of solution for the  system \eqref{EC1}, we will first study the well-possessedness of system \eqref{A1}. We have the following.
\begin{lemma}\label{well-p1}
There exist $r>0$ such that  for each $y_{0}\in H^{3}(\Omega)\cap H^{1}_{0}(\Omega)$ with $\Delta y_0 \in H_0^1(\Omega)$ and $g\in X_{0}$ satisfying
$$
\|y_{0}\|_{H^{3}(\Omega)}+\|g\|_{X_{0}}\leq r,
$$
the problem \eqref{A1} has a unique solution $y$ in $X_{1}$ satisfying
\begin{equation}\label{A2}
	\|y\|_{X_{1}}\leq C \left( \|y_{0}\|_{H^{3}(\Omega)}+ \|g\|_{X_{0}}\right),
\end{equation}
where $C$ is a constant only depending  on $\Omega, T, M, a_0$ and $a_1.$
\end{lemma}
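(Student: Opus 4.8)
The plan is to realize the solution of \eqref{A1} as a fixed point of a map built from the linear parabolic theory at the regularity level of $X_1$. Fix $R>0$ (to be chosen small) and set $\mathcal{B}_R=\{z\in X_1:\ \|z\|_{X_1}\le R,\ z(\cdot,0)=y_0\}$. Given $z\in\mathcal{B}_R$, let $y=\Lambda(z)\in X_1$ be the unique solution of the \emph{linear} problem
\begin{equation}\label{wp-lin}
\left\{\begin{array}{lll}
y_t-\nabla\cdot\left(a(\nabla z)\nabla y\right)=g-f(z) & \mbox{in} & Q,\\
\noalign{\smallskip}
y=0 & \mbox{on} & \Sigma,\\
\noalign{\smallskip}
y(\cdot,0)=y_0 & \mbox{in} & \Omega.
\end{array}\right.
\end{equation}
Because $a_0\le a(\nabla z)\le a_1$, the operator in \eqref{wp-lin} is uniformly parabolic; moreover, since $a\in C^3(\mathbb{R}^N;\mathbb{R})$ with the bounds in \eqref{hipotesis}, $z\in X_1$, and $H^2(\Omega)\hookrightarrow L^\infty(\Omega)$ (this is where $N\le 3$ enters), the coefficient $a(\nabla z)$ and its space--time derivatives up to the order required by $L^2$ maximal regularity belong to the appropriate spaces, so \eqref{wp-lin} is well posed in $X_1$. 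The hypotheses $y_0\in H^3(\Omega)\cap H^1_0(\Omega)$ and $\Delta y_0\in H^1_0(\Omega)$ supply the compatibility conditions at $t=0$ needed for a solution in $X_1$ (namely $y_0\in H^1_0(\Omega)$ and $y_t(\cdot,0)\in H^1_0(\Omega)$), and, using $f(0)=0$ together with $|f|+|f'|+|f''|\le M$, one has $\|f(z)\|_{X_0}\le C\big(\|z\|_{X_1}+\|z\|_{X_1}^2\big)$. Consequently the linear estimate reads
\begin{equation}\label{wp-est}
\|\Lambda(z)\|_{X_1}\le C_1\big(\|g\|_{X_0}+\|y_0\|_{H^3(\Omega)}\big)+C_1\big(R+R^2\big)\|z\|_{X_1},
\end{equation}
with $C_1$ depending only on $\Omega,T,M,a_0,a_1$.

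\textbf{Fixed point and the estimate.} Choosing $R$ so small that $C_1(R+R^2)\le\tfrac12$ and then $r$ so small that $C_1r\le R/2$, estimate \eqref{wp-est} shows that $\Lambda$ maps $\mathcal{B}_R$ into itself whenever $\|y_0\|_{H^3(\Omega)}+\|g\|_{X_0}\le r$. For $z_1,z_2\in\mathcal{B}_R$, the difference $w=\Lambda(z_1)-\Lambda(z_2)$ solves a linear problem with coefficient $a(\nabla z_1)$, zero initial datum, and right-hand side $\nabla\cdot\big[(a(\nabla z_1)-a(\nabla z_2))\nabla\Lambda(z_2)\big]-\big(f(z_1)-f(z_2)\big)$; since $a$ and its derivatives are globally Lipschitz and $\|\Lambda(z_2)\|_{X_1}\le R$, the $X_0$-norm of this right-hand side is bounded by $C(R)\|z_1-z_2\|_{X_1}$, so after shrinking $R$ further $\Lambda$ is a contraction on $\mathcal{B}_R$. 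Its unique fixed point $y$ solves \eqref{A1}; uniqueness in all of $X_1$ follows from a Gronwall argument on the difference of two solutions. Finally, putting $z=y$ in \eqref{wp-est} and absorbing $\tfrac12\|y\|_{X_1}$ gives $\|y\|_{X_1}\le 2C_1\big(\|g\|_{X_0}+\|y_0\|_{H^3(\Omega)}\big)$, which is \eqref{A2}.

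\textbf{Main difficulty.} The delicate point is the $X_1$-maximal regularity of \eqref{wp-lin} together with the nonlinear bookkeeping: one must check that the variable coefficient $a(\nabla z)$ built from $z\in X_1$ is regular enough for the highest-order bounds ($y\in L^2(0,T;H^4)$, $y_t\in L^2(0,T;H^2)$, $y_{tt}\in L^2(0,T;L^2)$), which forces estimates on products such as $D^2a(\nabla z)[\nabla z_t,\nabla z_t]$ and $D^3a(\nabla z)[\nabla z,\nabla z,\nabla z]$ and their time derivatives --- all of which close precisely because of the Sobolev embeddings available for $N\le 3$ and the chain-rule arguments already used in Section~\ref{sec4}. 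Tracking the compatibility conditions at $t=0$ so that the solution genuinely lies in $X_1$ is the other technical ingredient. An equivalent alternative is to apply the inverse function theorem to the map $y\mapsto\big(y_t-\nabla\cdot(a(\nabla y)\nabla y)+f(y),\,y(\cdot,0)\big)$ from an affine subset of $X_1$ into $X_0\times\big(H^3(\Omega)\cap H^1_0(\Omega)\big)$, whose derivative at $0$ is the isomorphism furnished by the linear heat theory; this yields the unique small solution and the linear bound \eqref{A2} simultaneously, with the same embeddings as the crux.
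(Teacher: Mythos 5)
Your route (freeze the coefficient, solve a linear problem with $X_1$ maximal regularity, and run a Banach fixed point) is genuinely different from the paper's, which constructs the solution directly by Faedo--Galerkin: the paper derives a ladder of energy estimates (Estimates I--VII, testing successively with $y_m$, $-\Delta y_m$, $-\Delta y_m'$, $\Delta^2 y_m$, $y_m''$, and the equation satisfied by $\Delta y_m$), uses the smallness of the data to keep $\|\nabla\Delta y_m\|^4$ below an explicit threshold by a continuation argument, and passes to the limit by compactness. The advantage of the Galerkin route is that it never needs a separate well-posedness theory for the variable-coefficient linear problem; your route buys a cleaner logical structure but pushes all the work into the asserted $X_1$ maximal regularity for $y_t-\nabla\cdot(a(\nabla z)\nabla y)=F$ with $z\in X_1$, which is not an off-the-shelf result at this regularity (the coefficient $a(\nabla z)$ is only $L^2$ in time at the $H^3$ spatial level), and would in practice be proved by exactly the estimates the paper carries out.

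There is, moreover, a concrete gap in your contraction step. To contract in the full $X_1$ norm you need the right-hand side of the difference equation to be small in $X_0$, i.e.\ in $L^2(0,T;H^2)$ with time derivative in $L^2(Q)$. Computing $\Delta\bigl(f(z_1)-f(z_2)\bigr)$ produces the term $\bigl(f''(z_1)-f''(z_2)\bigr)|\nabla z_1|^2$, and computing second space derivatives of $\bigl(a(\nabla z_1)-a(\nabla z_2)\bigr)\nabla\Lambda(z_2)$ produces terms involving $D^2a(\nabla z_1)-D^2a(\nabla z_2)$ and $D^3a(\nabla z_1)-D^3a(\nabla z_2)$. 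Under \eqref{hipotesis} one only has $f\in C^2$ and $a\in C^3$ with \emph{bounded} (not Lipschitz) top-order derivatives, so none of these differences is controlled by $\|z_1-z_2\|_{X_1}$, and the map $\Lambda$ cannot be shown to be a contraction in $X_1$ as stated. The standard repair is to prove the contraction only in a weaker norm (e.g.\ $C([0,T];L^2)\cap L^2(0,T;H^1)$, where only $f'$ and $Da$ Lipschitz are needed) while keeping the ball bounded in $X_1$, and to recover membership of the fixed point in $X_1$ by weak compactness; alternatively, use Schauder's theorem or, as the paper does, avoid the issue entirely via Galerkin. The self-map estimate, the use of the compatibility conditions $y_0,\Delta y_0\in H^1_0(\Omega)$, and the final absorption giving \eqref{A2} are all fine.
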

\begin{proof}
 We  employ Faedo-Galerkin method with  the Hilbert basis from $H^{1}_{0}(\Omega)$, given by  eigenvectors $(w_{j})$ of the spectral problem $\left((w_{j},v)\right)=\lambda_{j} (w_{j},v),$ for all $v\in V=H^{2}(\Omega)\cap H^{1}_{0}(\Omega)$ and $j=1,2,3,...$ We  represent  by $V_{m}$ the subspace of $V$ generated by vectors $\{w_1,w_2,...,w_m\}.$ We propose the following approximate problem:
\begin{equation}\label{A3}
\left\{
\begin{array}{l}
\left(y'_{m},v\right)+\left(a(\nabla y_{m})\nabla y_{m},\nabla v\right)+(f(y_{m}),v)=\left(g,v\right), \,\, \forall \, v\in V_m,\\
	\noalign{\smallskip} 
y_{m}(0)=y_{0m} \to y_{0}\,\, \mbox{in}\,\, H^{3}(\Omega)\cap H^{1}_{0}(\Omega).
\end{array}
\right.
\end{equation}
The existence and uniqueness of (local in time) solution to the \eqref{A3} is ensure by classical ODE theory. The following estimates show that in fact, they are defined for all $t$. We can get uniform estimates of $y_{m}$ in the usual way:

{\bf Estimate I:} Taking $v=y_{m}(t)$ in \eqref{A3}, we deduce that
\begin{equation}\label{A-eq1}
	\displaystyle \frac{1}{2}\frac{d}{dt}\|y_m(t)\|^{2}+\frac{a_0}{2}\| \nabla y_m(t)\|^{2}\leq \tilde{C}_{1}\left(\|y_{m}(t)\|^{2}+\|g(t)\|^{2}\right),
\end{equation}
and 
\begin{equation}\label{A-eq2}
	\|y_{m}(t)\|^{2}+\|\nabla y_{m}\|^{2}_{L^{2}(Q)}\leq \tilde{C}_{2}\left(\|y_0\|^{2}+\|g\|^{2}_{L^{2}(Q)} \right).
\end{equation}
In the sequel, the symbol $\tilde{C}_{k}$ is a constant that only depend of $a_0,\, a_1,\, M$ and $\Omega$ for $k=1,...,17.$

Notice that the term of side right in \eqref{A-eq2} don't dependent of $m$ and due it, we can extend the solution $(y_m)$ to the interval $[0,T].$

{\bf Estimate II:} Taking $v=- \Delta y_{m}(t)$ in \eqref{A3}, we see that
$$
\begin{array}{l}
	\displaystyle \frac{1}{2}\frac{d}{dt}\|\nabla y_{m}\|^{2} + \int_{\Omega}a(\nabla y_{m})|\Delta y_{m}|^{2}\,dx + \sum_{j=1}^{N}\int_{\Omega}Da(\nabla y_{m})\cdot\frac{\partial}{\partial x_{j}}(\nabla y_{m})\frac{\partial y_{m}}{\partial x_{j}}\Delta y_{m}dx\\
	\noalign{\smallskip} \phantom{\frac{1}{2}||\nabla y'_{m}||}
\displaystyle - \int_{\Omega}f(y_{m})\Delta y_{m}\,dx=-\int_{\Omega}g \Delta y_{m}\,dx.
\end{array}
$$
Passing  the last two terms  from the left  side  to the  right  side, and by absolute  value inequalities,
$$
\begin{array}{l}
	\displaystyle \frac{1}{2}\frac{d}{dt}\|\nabla y_{m}\|^{2} + \int_{\Omega}a(\nabla y_{m})|\Delta y_{m}|^{2}\,dx\\
	\noalign{\smallskip} \phantom{\frac{1}{2}||\nabla y'_{m}||}
\displaystyle \leq \left|\sum_{j=1}^{N}\int_{\Omega}Da(\nabla y_{m})\cdot\frac{\partial}{\partial x_{j}}(\nabla y_{m})\frac{\partial y_{m}}{\partial x_{j}}\Delta y_{m}dx\right| + \left|\int_{\Omega}f(y_{m})\Delta y_{m}\,dx\right|+\left| \int_{\Omega}g \Delta y_{m}\,dx\right|.
\end{array}
$$
Using   H\"older's  inequality , the properties of the functions $a(\cdot)$ and $f(\cdot)$,   Young's  inequality, and the fact that $H^{1}(\Omega)\hookrightarrow L^{6}(\Omega)$  when $N=1,2$ or $3$; we have
\begin{equation}\label{A-eq3}
	\frac{1}{2}\frac{d}{dt}\|\nabla y_m\|^{2}+\frac{a_0}{2}\|\Delta y_m\|^{2} \leq \tilde{C}_{3} \|y_m\|^{2}+\tilde{C}_{3} \|\Delta y_m\|^{2}\|\nabla \Delta y_m\|^{2}+\tilde{C}_{3}\|g\|^{2}.
\end{equation}

{\bf Estimate III:} Taking $v=- \Delta y'_m(t)$ in \eqref{A3}, we have
$$
\begin{array}{l}
	\displaystyle \frac{1}{2}\|\nabla y'_{m}\|^{2} + \int_{\Omega}a(\nabla y_{m})\Delta y_{m}\Delta y'_{m}\,dx+\sum_{j=1}^{N}\int_{\Omega}Da(\nabla y_{m})\frac{\partial}{\partial x_{j}}(\nabla y_{m})\frac{\partial y_{m}}{\partial x_{j}}\Delta y'_{m}\,dx\\
	\noalign{\smallskip} \phantom{\frac{1}{2}||\nabla y'_{m}||}
\displaystyle -\int_{\Omega}f(y_{m})\Delta y'_{m}\,dx=-\int_{\Omega}g \Delta y'_{m}\,dx.
\end{array}
$$
Expressing  the second  term  on  the left  side  in an  appropriate  way, we have
$$
\begin{array}{l}
	\displaystyle \frac{1}{2} \|\nabla y'_{m}\|^{2} + \frac{1}{2} \frac{d}{dt}\left(\int_{\Omega}a(\nabla y_{m})|\Delta y_{m}|^{2}\,dx\right)-\frac{1}{2}\int_{\Omega}Da(\nabla y_{m})\cdot\nabla y'_{m}|\Delta y_{m}|^{2}\,dx\\
	\noalign{\smallskip} \phantom{\frac{1}{2}||\nabla y'_{m}||}
\displaystyle +\sum_{j=1}^{N}\int_{\Omega}Da(\nabla y_{m})\cdot\frac{\partial}{\partial x_{j}}(\nabla y_{m})\frac{\partial y_{m}}{\partial x_{j}}\Delta y'_{m}\,dx-\int_{\Omega}f(y_{m})\Delta y'_{m}\,dx=-\int_{\Omega}g \Delta y'_{m}\,dx.
\end{array}
$$
Passing  the last two terms  from the left  side  to the  right  side; and using  inequality's Holder, the properties of $a(\cdot)$ and $f(\cdot)$,  inequality's  Young, and the fact that $H^{1}(\Omega)\hookrightarrow  L^{6}(\Omega)$  when $N=1,2$ or $3$; we have
\begin{equation}\label{A-eq4}
\begin{array}{l}
	\displaystyle \frac{1}{2}\|\nabla y'_{m}\|^{2}+\frac{1}{2} \frac{d}{dt}\left(\int_{\Omega}a( \nabla y_m)|\Delta y_m|^{2}\, dx \right)\\
	\noalign{\smallskip} \phantom{\frac{1}{2}||\nabla y'_{m}||}
\displaystyle \leq \tilde{C}_{4} \left(\|\Delta y_m\|^{2} \|\nabla \Delta y_m\|^{2} + \|g\|^{2} + \|y_{m}\|^{2}\right)\\
	\noalign{\smallskip} \phantom{\frac{1}{2}||\nabla y'_{m}||AS}
\displaystyle + \epsilon\left( \|\Delta y_m\|^{2} + \|\Delta y'_m\|^{2}\right).
\end{array}
\end{equation}

{\bf Estimate IV:} Taking derivative with respect at time $t$ in the equation $\eqref{A3}_{1}$ and put $v=-\Delta y'_{m}(t)$, we have
$$
\begin{array}{l}
\displaystyle -\int_{\Omega}y''_{m}\Delta y'_{m}\,dx + \int_{\Omega} \nabla \cdot \Big[(Da(\nabla y_{m})\cdot \nabla y'_{m})\nabla y_{m} \Big] \Delta y'_{m}\, dx\\
	\noalign{\smallskip} \phantom{\frac{1}{2}||||AS}
\displaystyle +\int_{\Omega}\nabla \cdot \left( a(\nabla y_{m})\nabla y'_{m} \right)\Delta y'_{m}\,dx-\int_{\Omega}f'(y_{m})y'_{m}\Delta y'_{m}\,dx\\
	\noalign{\smallskip} \phantom{\frac{1}{2}||\nabla y'_{m}||AS}
\displaystyle =-\int_{\Omega}g'\Delta y'_{m}\,dx.
\end{array}
$$
Performing  some  operations on  the third term of the  left  side, we have
$$
\begin{array}{l}
	\displaystyle \frac{1}{2}\frac{d}{dt}\|\nabla y'_{m}\|^{2} + \int_{\Omega}a(\nabla y_{m})|\Delta y'_{m}|^{2}\,dx\\
	\noalign{\smallskip} \phantom{\frac{1}{2}||||AS}
\displaystyle = -\int_{\Omega} \nabla \cdot \Big[\left( Da(\nabla y_{m}) \cdot \nabla y'_{m}\right) \nabla y_{m} \Big] \Delta y'_{m}\,dx\\
	\noalign{\smallskip} \phantom{\frac{1}{2}||||ASAS}
\displaystyle -\sum_{j=1}^{N}\int_{\Omega}Da(\nabla y_{m})\cdot\frac{\partial}{\partial x_{j}}(\nabla y_{m})\frac{\partial y'_{m}}{\partial x_{j}}\Delta y'_{m}\,dx\\
	\noalign{\smallskip} \phantom{\frac{1}{2}||\nabla y'_{m}||S}
\displaystyle -\int_{\Omega}f'(y_{m})y'_{m}\Delta y'_{m}\,dx-\int_{\Omega}g'\Delta y'_{m}\,dx.
\end{array}
$$
Performing  some  operations  as in the Estimate III, we have
\begin{equation}\label{A-eq5}
\begin{array}{l}
	\displaystyle \frac{1}{2}\frac{d}{dt}\|\nabla y'_m\|^{2}+\frac{a_0}{2}\| \Delta y_m\|^{2}\\
	\noalign{\smallskip} \phantom{\frac{1}{2}||\nabla y'_{m}||AA}
\displaystyle \leq \tilde{C}_{5}\left( \|\Delta y'_m\|^{2} \|\nabla \Delta y_m\|^{2} + \| \nabla y'_m\|^{2} + \|g'\|^{2} \right).
\end{array}
\end{equation}
Notice that due to properties of the function $a(\cdot)$, we have that $\Delta y_{m}\in H^{1}_{0}(\Omega).$ Therefore, we can work with the following equation
\begin{equation}\label{A-eq6}
\left\{
\begin{array}{l}
\left(\Delta y'_{m},v\right)+\left(a(\nabla y_{m})\nabla \Delta y_{m}+\Delta (a(\nabla y_m))\nabla y_m, \nabla v \right)\\
	\noalign{\smallskip}\phantom{\int_{\Omega_1}\tilde{U}} 
\displaystyle 
 +\left(2\nabla\left(a(\nabla y_m)\right)\Delta y_m,\nabla v\right)+(\Delta\left(f(y_{m})\right),v)=\left(\Delta g,v\right), \,\, \forall \, v\in V,\\
	\noalign{\smallskip} 
\Delta y_{m}(0)=\Delta y_{0m} \to y_{0}\,\, \mbox{in}\,\, H^{1}_{0}(\Omega).
\end{array}
\right.
\end{equation}

Similarly as obtained in estimates I, II, III and IV, we will  obtain  new estimates  for $y_{m}$ and from \eqref{A-eq6}

{\bf Estimate V:} Taking $v=\Delta y_{m}(t)$ in \eqref{A-eq6}, we have
\begin{equation}\label{A-eq7}
\begin{array}{l}
	\displaystyle \frac{1}{2}\frac{d}{dt}\| \Delta y_m\|^{2} + \frac{a_0}{2}\|\nabla \Delta y_{m}\|^{2}\\
		\noalign{\smallskip} \phantom{\int_{\Omega_1}\tilde{U} \,dxG} 
\displaystyle \leq \tilde{C}_{6}\left( \|\nabla y_m\|^{2} + \|\Delta g\|^{2} + \|\Delta y_m\|^{4} \| \nabla \Delta y_m \|^{2}\right).
\end{array}
\end{equation}

{\bf Estimate VI:} Taking $v=\Delta^{2} y_{m}(t)$ in \eqref{A-eq6}, we have
\begin{equation}\label{A-eq8}
\begin{array}{l}
	\displaystyle \frac{1}{2}\frac{d}{dt}\|\nabla \Delta y_m\|^{2} + \frac{a_0}{2}\| \Delta^{2} y_{m}\|^{2}\\
		\noalign{\smallskip} \phantom{\int_{\Omega_1}\tilde{U} \,dxG} 
\displaystyle \leq \tilde{C}_{7}\left( \|\Delta y_m\|^{2} + \|\Delta g\|^{2} + \|\Delta y_m\|^{4} \|  \Delta^{2} y_m \|^{2}\right).
\end{array}
\end{equation}

{\bf Estimate VII:} Derivative with respect to $t$ in \eqref{A-eq6} and  taking $v=y''_{m}(t)$, we have
\begin{equation}\label{A-eq9}
\begin{array}{l}
	\displaystyle \|y''_m\|^{2} + \frac{1}{2}\frac{d}{dt}\left(\int_{\Omega}a(\nabla y_m)|\nabla y'_m|^{2}\, dx \right)\\
		\noalign{\smallskip} \phantom{\int_{\Omega_1}\tilde{U} \,dxG} 
\displaystyle \leq \tilde{C}_{8}\left(\|\Delta y'_m\|^{2} + \| g'\|^{2}+\|\Delta y'_m\|^{2} \| \nabla\Delta y_m \|^{2}\right).
\end{array}
\end{equation}
From all these estimates and taking $\displaystyle \epsilon=\frac{a_0}{16\tilde{C}_{5}}$ in \eqref{A-eq4}, we have
\begin{equation}\label{A-eq10}
\begin{array}{l}
	\displaystyle \frac{1}{2}\frac{d}{dt} \left( \tilde{C}_{9} \|\Delta y_m\|^{2} + \tilde{C}_{10} \|\nabla \Delta y_m\|^{2} + \tilde{C}_{11}\int_{\Omega}a(\nabla y_m)|\nabla y'_m|^{2}\, dx\right.\\
\noalign{\smallskip} \phantom{AA} \left.\displaystyle +3\tilde{C}_{5}\int_{\Omega}a(\nabla y_m)|\Delta y_m|^{2}\, dx + \|y_m\|^{2} + \|\nabla y_m\|^{2} + \|\nabla y'_m\|^{2}
 \right)\\
\noalign{\smallskip} \phantom{AA} \displaystyle +\tilde{C}_{9}\left(\frac{a_0}{8}-\tilde{C}_{6} \|\nabla\Delta y_m\|^{4} \right)\|\nabla\Delta y_m\|^{2} + \tilde{C}_{10}\left(\frac{a_0}{8}-\tilde{C}_{7}\|\nabla\Delta y_m\|^{4}\right) \|\Delta^{2}y_m\|^{2}\\
\noalign{\smallskip}\phantom{AA} \displaystyle + \left(\frac{a_0}{8}-\tilde{C}_{12} \|\nabla\Delta y_m\|^{4} \right) \|\Delta y_m\|^{2}+\left(\frac{a_0}{8}-\tilde{C}_{13} \|\nabla\Delta y_m\|^{4}\right) \|\Delta y'_m\|^{2}\\
\noalign{\smallskip}\phantom{AA}
\displaystyle +\tilde{C}_{11}\|y''_m\|^{2}\displaystyle +\frac{a_0}{4}\|\nabla y_m\|^{2}+\frac{\tilde{C}_{5}}{2}\|\nabla y'_m\|^{2} \leq \tilde{C}_{13} \|\Delta g\|^{2} + \tilde{C}_{14} \|g'\|^{2}\\
\noalign{\smallskip}\phantom{AA}
\displaystyle + \tilde{C}_{15} \left(\|y_0\|^{2}+\|g\|^{2}_{L^{2}(Q)}\right),
\end{array}
\end{equation}
where,
$$\begin{array}{l}
\displaystyle \tilde{C}_{9}=\frac{a_0}{16\tilde{C}_{6}},\,\, \tilde{C}_{10}=\frac{a_0}{16\tilde{C}_{7}},\,\, \tilde{C}_{11}=\frac{a_0}{16\tilde{C}_{8}},\,\, \tilde{C}_{12}=\frac{2(3\tilde{C}_{4}\tilde{C}_{5}+\tilde{C}_{3})^{2}}{a_0},\,\,\tilde{C}_{13}=\frac{2(\tilde{C}_{5}+\frac{a_0}{16})^{2}}{a_0}, \\
\noalign{\smallskip}
\displaystyle \tilde{C}_{14}=\left(\tilde{C}_{1}+\tilde{C}_{3}+\tilde{C}_{4}+\tilde{C}_{5}+3\tilde{C}_{4}\tilde{C}_{5}\right)C(\Omega)+\frac{a_0}{8},\,\, \tilde{C}_{15}=\tilde{C}_{1}+\tilde{C}_{3},\,\,\tilde{C}_{16}=\frac{a_0}{16}+\tilde{C}_{5}.
\end{array}
$$
Also, from \eqref{A3} taking $v=-\Delta y_{m}(t)$. We have
$$\|\nabla y'_{m}(0)\|^{2}\leq \tilde{C}_{17}\left( \|\nabla\Delta y_{0}\|^{2} + \|\nabla g(0)\|^{2}\right).$$
There exist $\epsilon_{0}>0$ such that for 
$$\|y_{0}\|_{H^{3}(\Omega)} + \|g\|_{X_{0}}< \epsilon_{0},$$
we have
\begin{equation}\label{A-eq11}
\begin{array}{l}
	\displaystyle \|\nabla\Delta y_{0}\|^{4}< \frac{a_0}{8\left(\tilde{C}_{6}+\tilde{C}_{7}+\tilde{C}_{12}+\tilde{C}_{13}\right)},
\end{array}
\end{equation}
and 
\begin{equation}
\left\{\begin{array}{l}
	\displaystyle (\tilde{C}_{9}+3\tilde{C}_{5}a_{1}) \|\Delta y_{0}\|^{2} + \tilde{C}_{10}\|\nabla\Delta y_{0}\|^{2} + (1+a_{1}\tilde{C}_{11}\tilde{C}_{16})\left(\|\nabla\Delta y_{0}\|^{2}+\|\nabla g(0)\|^{2}\right) \\
\noalign{\smallskip}	 
\displaystyle +\|y_{0}\|^{2}+\|\nabla y_0\|^{2}+2\tilde{C}_{15}T \left(\|y_0\|^{2}+\|g\|^{2}\right)+2\tilde{C}_{13}\|\Delta g\|^{2}\\
\noalign{\smallskip} 
\displaystyle + 2\tilde{C}_{14}\|g'\|^{2}_{L^{2}(Q)} < \frac{\tilde{C}_{10}\ a^{\frac{1}{2}}_{0}}{4\left(\tilde{C}_{6}+\tilde{C}_{7}+\tilde{C}_{12}+\tilde{C}_{13}\right)^{\frac{1}{2}}}.
\end{array}\right.
\end{equation}
Thanks to these inequalities we can prove by a contradiction argument that,  the following estimate holds
$$
\|\nabla\Delta y_{m}(t)\|^{4}< \frac{a_0}{8\left(\tilde{C}_{6}+\tilde{C}_{7}+\tilde{C}_{12}+\tilde{C}_{13}\right)}, \,\,\forall\, t\geq 0.
$$
Now, we can obtain from estimate \eqref{A-eq10}
$$
\left\{\begin{array}{l}
(y_m)\,\, \mbox{is bounded in }\,\, L^{\infty}(0,T;H^{3}(\Omega))\cap L^{2}(0,T;H^{4}(\Omega)),\\
\noalign{\smallskip} 
(y'_{m})\,\, \mbox{is bounded in }\,\, L^{\infty}(0,T;H^{1}_{0}(\Omega))\cap L^{2}(0,T;H^{2}(\Omega)),\\
\noalign{\smallskip} 
(y''_{m})\,\, \mbox{is bounded in }\,\, L^{2}(0,T;L^{2}(\Omega)).
\end{array}\right.
$$
All these uniform bounds allow to take limits in \eqref{A3} (at least for a subsequence) as $m\to \infty$. Indeed, the unique delicate point is the a.e convergence of $a(\nabla y_m)$. But this is a consequence of the fact that the sequence $(y_m)$ is pre-compact in $L^{2}(0,T;H^{2}(\Omega))$ and $a \in C^{3}(\mathbb{R}^{N})$.

The uniqueness of the strong solution to \eqref{A1} can be proved by argument standards (to see \cite{Rincon}).

\end{proof}
A consequence of Lemma \ref{well-p1} is the following result, which guarantees that system \eqref{EC1} is well-possessedness:

\begin{lemma}\label{well-p2}
There exists $r>0$ such that  for each $y_{0},\, \hat{y}_{0}\in H^{3}(\Omega)\cap H^{1}_{0}(\Omega)$ with $\Delta y_0, \Delta \hat{y}_0 \in H_0^1(\Omega)$ and $\xi,\, u\in X_{0}$  with   $\mathrm{supp}\ u\subset \omega\times(0,T)$ satisfying
$$
\|y_{0}+\tau \hat{y}_{0}\|_{H^{3}(\Omega)} + \|\xi\|_{X_{0}} + \|u\|_{X_0}\leq r,
$$
the problem \eqref{EC1} has a unique solution $y$ in $X_{1}$ satisfying
\begin{equation}\label{A4}
	\|y\|_{X_{1}}\leq C \left( \|y_{0}+\tau \hat{y}_{0}\|_{H^{3}(\Omega)}+ \|\xi\|_{X_{0}} + \|u\|_{X_{0}}\right).
\end{equation}
\end{lemma}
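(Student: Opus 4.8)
The plan is to obtain this lemma as an immediate corollary of Lemma~\ref{well-p1}. System~\eqref{EC1} has exactly the structure of \eqref{A1}; the only difference lies in the concrete right-hand side and initial datum, namely $g=\xi+u\chi_{\omega}$ and $y(0)=y_0+\tau\hat{y}_0$. Hence the whole task reduces to checking that, once $r$ is chosen small enough, this pair of data satisfies the hypotheses of Lemma~\ref{well-p1}.

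First I would note that, since $\mathrm{supp}\,u\subset\omega\times(0,T)$, one has $u\chi_{\omega}=u$, so $g:=\xi+u\chi_{\omega}=\xi+u\in X_{0}$ with $\|g\|_{X_{0}}\le\|\xi\|_{X_{0}}+\|u\|_{X_{0}}$; in particular the cut-off $\chi_{\omega}$ introduces no singularities (this is the only place the support hypothesis is used). Next, by linearity the initial datum $\widetilde{y}_0:=y_0+\tau\hat{y}_0$ belongs to $H^{3}(\Omega)\cap H^{1}_{0}(\Omega)$, and $\Delta\widetilde{y}_0=\Delta y_0+\tau\Delta\hat{y}_0\in H^{1}_{0}(\Omega)$ because $y_0$ and $\hat{y}_0$ both enjoy this property by assumption.

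Then I would take $r$ to be the radius furnished by Lemma~\ref{well-p1}. Assuming
\[
\|y_0+\tau\hat{y}_0\|_{H^{3}(\Omega)}+\|\xi\|_{X_{0}}+\|u\|_{X_{0}}\le r,
\]
we obtain $\|\widetilde{y}_0\|_{H^{3}(\Omega)}+\|g\|_{X_{0}}\le\|\widetilde{y}_0\|_{H^{3}(\Omega)}+\|\xi\|_{X_{0}}+\|u\|_{X_{0}}\le r$, so Lemma~\ref{well-p1} applies to the pair $(\widetilde{y}_0,g)$ and yields a unique solution $y\in X_{1}$ of \eqref{EC1} with
\[
\|y\|_{X_{1}}\le C\bigl(\|\widetilde{y}_0\|_{H^{3}(\Omega)}+\|g\|_{X_{0}}\bigr)\le C\bigl(\|y_0+\tau\hat{y}_0\|_{H^{3}(\Omega)}+\|\xi\|_{X_{0}}+\|u\|_{X_{0}}\bigr),
\]
which is precisely \eqref{A4}. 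Uniqueness is inherited verbatim from the uniqueness assertion of Lemma~\ref{well-p1}.

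There is no genuine obstacle in this argument: all the analytical effort --- the Faedo--Galerkin scheme, the chain of energy estimates I--VII, the bootstrap controlling $\|\nabla\Delta y_m\|$, and the passage to the limit --- has already been carried out in the proof of Lemma~\ref{well-p1}. The only point deserving a word of care is the verification that $u\chi_{\omega}\in X_{0}$, which is exactly where the hypothesis $\mathrm{supp}\,u\subset\omega\times(0,T)$ is needed; without it the characteristic function would spoil the $H^{2}$-in-space regularity required for membership in $X_{0}$.
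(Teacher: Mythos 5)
Your proposal is correct and follows exactly the route the paper takes: the paper's own proof consists of a single line observing that Lemma~\ref{well-p1} applied with $g=\xi+u\chi_{\omega}$ and initial datum $y_{0}+\tau\hat{y}_{0}$ gives \eqref{A4} directly. Your write-up merely makes explicit the routine verifications (membership of the data in the required spaces and the triangle inequality on the norms) that the paper leaves implicit.
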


\begin{proof}
Notice that the estimate \eqref{A2}, we have that \eqref{A4} holds.
\end{proof}


\section{Well-possessedness  for  the system \eqref{eq-char} }\label{Apex_2}
\begin{lemma}\label{Lemma-eq-char}
There exists $\tilde{r}>0$ such that  for each  $\xi,\, u\in X_{0}$  with   $\mathrm{supp}\  u\subset \omega\times(0,T)$ satisfying
$$
\|\xi\|_{X_{0}}+\|u\|_{X_0}\leq \tilde{r},
$$
the problem \eqref{char-e} has a unique solution $(y,h)$ in $X_{1}\times X_{0}$ satisfying
\begin{equation}\label{A5}
	\|y\|_{X_{1}}+\|h\|_{X_{0}}\leq C \left( \|\xi\|_{X_{0}} + \|u\|_{X_{0}}\right).
\end{equation}
\end{lemma}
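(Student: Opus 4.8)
The plan is to use the triangular (cascade) structure of \eqref{eq-char}: the equation for $y$ does not involve $h$, so we solve it first by the results of Appendix~\ref{Apex_1}, and then we treat the equation for $h$ as a \emph{linear} backward parabolic problem whose coefficients and source are built from $y$. First I would observe that the first equation of \eqref{eq-char} is exactly \eqref{A1} with $y_{0}=0$ and $g=\xi+u\chi_{\omega}$. Since $\|g\|_{X_{0}}\leq\|\xi\|_{X_{0}}+\|u\|_{X_{0}}$, Lemma~\ref{well-p1} yields a threshold $\tilde r_{1}>0$ such that, whenever $\|\xi\|_{X_{0}}+\|u\|_{X_{0}}\leq\tilde r_{1}$, there is a unique $y\in X_{1}$ solving this equation with
\[
\|y\|_{X_{1}}\leq C\big(\|\xi\|_{X_{0}}+\|u\|_{X_{0}}\big).
\]
Using the embeddings $X_{1}\hookrightarrow C([0,T];H^{3}(\Omega))$ and $H^{3}(\Omega)\hookrightarrow C^{1}(\overline{\Omega})$ (valid for $N\leq 3$), we get $\|\nabla y\|_{L^{\infty}(Q)}\leq C\|y\|_{X_{1}}\leq C(\|\xi\|_{X_{0}}+\|u\|_{X_{0}})$, which can be made as small as desired by taking the smallness threshold $\tilde r$ small enough.

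Next I would freeze this $y$ and rewrite the flux in the second equation of \eqref{eq-char} as $Da(\nabla y)(\nabla y\cdot\nabla h)+a(\nabla y)\nabla h=B\nabla h$, where $B_{jk}:=a(\nabla y)\,\delta_{jk}+D_{j}a(\nabla y)\,y_{x_{k}}$. For every $\zeta\in\mathbb{R}^{N}$,
\[
B\zeta\cdot\zeta=a(\nabla y)|\zeta|^{2}+(Da(\nabla y)\cdot\zeta)(\nabla y\cdot\zeta)\geq\big(a_{0}-CM\|\nabla y\|_{L^{\infty}(Q)}\big)|\zeta|^{2}\geq\tfrac{a_{0}}{2}|\zeta|^{2}
\]
once $\tilde r$ is small enough, so the operator $h\mapsto-\nabla\cdot(B\nabla h)$ is uniformly elliptic; moreover $B\in L^{\infty}(Q)$, and $\nabla_{x}B$ involves only first and second spatial derivatives of $y$, hence is controlled by $\|y\|_{X_{1}}$ (though only in a space that is $L^{2}$ in time). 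The equation for $h$ becomes the linear backward uniformly parabolic problem
\[
-h_{t}-\nabla\cdot(B\nabla h)+f'(y)h=y\chi_{\mathcal{O}}\ \ \text{in }Q,\qquad h=0\ \text{on }\Sigma,\qquad h(\cdot,T)=0,
\]
with bounded zeroth order coefficient ($|f'(y)|\leq M$), source $y\chi_{\mathcal{O}}\in L^{2}(Q)$ (indeed $\|y\chi_{\mathcal{O}}\|_{L^{2}(Q)}\leq\|y\|_{X_{1}}$), and zero terminal datum. After reversing time this is a standard forward Cauchy problem, which I would solve by a Faedo--Galerkin scheme along the lines of Appendix~\ref{Apex_1}, but simpler since the equation is linear; energy estimates obtained by testing successively with $h$ and with $-\Delta h$ in the Galerkin approximation, using the coercivity of $B$, the bounds on $B$, $\nabla_{x}B$, $f'(y)$, and $H^{1}(\Omega)\hookrightarrow L^{6}(\Omega)$, give $h\in X_{0}$ together with
\[
\|h\|_{X_{0}}\leq C\,\|y\chi_{\mathcal{O}}\|_{L^{2}(Q)}\leq C\,\|y\|_{X_{1}}.
\]
Adding this to the estimate for $y$ and taking $\tilde r$ to be the smallest of the thresholds encountered gives \eqref{A5}. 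Uniqueness follows because $y$ is unique by Lemma~\ref{well-p1}, and then, $y$ being fixed, the $h$-equation is linear and its energy estimate yields uniqueness of $h$.

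The delicate step is the last one: reaching the full $X_{0}$-regularity $h\in L^{2}(0,T;H^{2}(\Omega))$ with $h_{t}\in L^{2}(0,T;L^{2}(\Omega))$, because the coefficients of $B$ inherit only the time integrability of $y\in X_{1}$ — for instance $\|\nabla_{x}^{2}y(t)\|_{L^{\infty}(\Omega)}^{2}$ is merely $L^{1}$ in $t$. One is therefore forced to carry time-dependent constants through the $H^{2}$ estimate and to close the argument with a Gronwall-type inequality with an $L^{1}$-in-time weight, exploiting the smallness of $\|\nabla y\|_{L^{\infty}(Q)}$ (and of $\|y\|_{X_{1}}$) to keep the resulting constant under control. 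This is precisely the kind of bookkeeping performed in Estimates~I--VII of Appendix~\ref{Apex_1}; everything else is routine.
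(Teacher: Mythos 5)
Your proposal is correct and follows essentially the same route as the paper: there, too, the $y$-part is handled by the estimates of Lemma~\ref{well-p1}, and $h\in X_{0}$ is obtained by Faedo--Galerkin energy estimates (testing with $h_{m}$, $-\Delta h_{m}$ and $h'_{m}$), with the quasilinear coupling terms absorbed thanks to the smallness of $\|\xi\|_{X_{0}}+\|u\|_{X_{0}}$. The only presentational difference is that you decouple the cascade explicitly and view the $h$-equation as a linear backward problem with frozen coefficients $B=a(\nabla y)I+Da(\nabla y)\otimes\nabla y$, whereas the paper runs a single Galerkin scheme for the coupled system; the underlying estimates coincide.
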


\begin{proof}
We  employ Faedo-Galerkin method with  the Hilbert basis from $H^{1}_{0}(\Omega)$, given by  eigenvectors $(w_{j})$ of the spectral problem $\left((w_{j},v)\right)=\lambda_{j} (w_{j},v),$ for all $v\in V=H^{2}(\Omega)\cap H^{1}_{0}(\Omega)$ and $j=1,2,3,...$ We  represent  by $V_{m}$ the subspace of $V$ generated by vectors $\{w_1,w_2,...,w_m\}.$ We propose the following approximate problem:
\begin{equation}\label{A5-eq1}
\left\{
\begin{array}{l}
	\left(y'_{m},v\right)+\left(a(\nabla y_{m})\nabla y_{m},\nabla v\right)+(f(y_{m}),v)=\left(g,v\right), \,\, \forall \, v\in V_m,\\
	\noalign{\smallskip} 
\left( -h'_{m},w\right)+\left(Da(\nabla y_{m})\left(\nabla y_{m}\cdot\nabla h_{m}\right)+a(\nabla y_{m})\nabla h_{m}, \nabla w \right)\\
	 \noalign{\smallskip}\phantom{\int_G-jjkjk}
+\left(f'(y_{m})h_{m},w \right)=\left(y_{m}\chi_{\mathcal{O}},w \right), \,\,\forall\, w\in V_{m},\\
	\noalign{\smallskip} 
y_{m}(0)=y_{0m} \to 0\,\, \mbox{in}\,\, H^{3}(\Omega)\cap H^{1}_{0}(\Omega),\\
	\noalign{\smallskip} 
h_{m}(T)=h_{0m}\to 0\,\, \mbox{in}\,\, H^{1}_{0}(\Omega).\\
\end{array}
\right.
\end{equation}
From the proof of Lemma \ref{well-p1}, we have that

\begin{equation}\label{A5-eq2}
\begin{array}{l}
	\|\Delta y_{m}(t)\|^{2}+\|\nabla\Delta y_{m}(t)\|^{2}+\|\nabla y'_{m}(t)\|^{2}\\
\noalign{\smallskip}\phantom{\int_G-jjkjk}
+\|y_{m}\|^{2}_{L^{\infty}(0,T;L^{2}(\Omega))} \leq \hat{C}_{1}\left(\|\xi\|^{2}_{X_{0}} + \|u\|^{2}_{X_{0}} \right).
\end{array}
\end{equation}

The existence and uniqueness of (local in time)  solution to the  $\eqref{A5-eq1}_{2},\, \eqref{A5-eq1}_{3}$ and $\eqref{A5-eq1}_{5}$  is ensure  by classical ODE theory. In order to extend the solution $h_{m}$ for $[0,T]$,  we will  obtain  estimates for $h_{m}$

{\bf Estimate I:} Taking $w=h_{m}(t)$  in \ref{A5-eq1}, we have

$$
\begin{array}{l}
	\displaystyle -\frac{d}{dt}\|h_{m}\|^{2}+\int_{\Omega}a(\nabla y_{m})|\nabla h_{m}|^{2}\,dx+\int_{\Omega}Da(\nabla y_{m})(\nabla y_{m}\cdot \nabla h_{m})\cdot\nabla h_{m}\,dx\\
\noalign{\smallskip}\phantom{\int_G-jjkjk}
\displaystyle =\int_{\Omega}y_{m}\chi_{\mathcal{O}}h_{m}\,dx.
\end{array}
$$
In the sequel, the symbol $\hat{C}_{k}$ is a constant that only depend of $a_0,\, a_1,\, M$ and $\Omega$ for $k=1,...,10.$
Passing  the last two terms  from left side to the right  side  and using  some operations along  with  the H\"older's inequality  and Young's inequality, we have
\begin{equation}\label{A5-eq3}
\begin{array}{l}
	\displaystyle -\frac{d}{dt}\|h_{m}\|^{2}+\frac{a_{0}}{2}\|\nabla h_{m}\|^{2}\leq \hat{C}_{2}\left( \|\nabla \Delta y_{m}\|^{2} \|\nabla h_{m}\|^{2}+\|h_{m}\|^{2}+\|y_{m}\|^{2} \right).
\end{array}
\end{equation}
Notice that in the proof of Lemma \ref{well-p1}, when $\xi$ and $u$  are  small sufficiently, we have that 
$$
\hat{C}_{2}\|\Delta y_{m}\|^{2}\leq \frac{a_{0}}{4},
$$
using this and by \ref{A5-eq3}, we have
\begin{equation}\label{A5-eq4}
\begin{array}{l}
	\displaystyle -\frac{d}{dt}\|h_{m}\|^{2}+\frac{3a_{0}}{4}\|\nabla h_{m}\|^{2}\leq \hat{C}_{2}\left(\|h_{m}\|^{2}+\|y_{m}\|^{2} \right),
\end{array}
\end{equation}
and by Gronwall's inequality and using \ref{A5-eq2},  we have
\begin{equation}\label{A5-eq5}
	\|h_{m}\|^{2}+\|\nabla h_{m}\|^{2}_{L^{2}(Q)}\leq \hat{C}_{3}\left(\|\xi\|^{2}_{X_{0}}+\|u\|^{2}_{X_{0}} \right).
\end{equation}

Notice  that the term of side right in \ref{A5-eq5}  don't dependent of $m$ and due it, we can  extend  the  solution $(h_{m})$  to the interval $[0,T]$.

{\bf Estimate II:} Taking $w=-\Delta h_{m}$  in \eqref{A5-eq2}, we have
$$
\begin{array}{l}
	\displaystyle -\frac{d}{dt}\|h_{m}\|^{2}+\int_{\Omega}\nabla \cdot \Big( Da(\nabla y_{m})\left(\nabla y_{m}\cdot \nabla h_{m}\right) \Big) \Delta h_{m}\,dx\\
\noalign{\smallskip}\phantom{\int_G-jjk}

\displaystyle +\int_{\Omega}a(\nabla y_{m})|\Delta h_{m}|^{2}\,dx+\int_{\Omega}Da(\nabla y_{m})\Delta y_{m}\nabla y_{m}\Delta h_{m}\,dx\\
\noalign{\smallskip}\phantom{\int_G-jjk}

\displaystyle -\int_{\Omega}f'(y_{m})h_{m}\Delta h_{m}\,dx=-\int_{\Omega}y_{m}\chi_{\mathcal{O}}\Delta h_{m}\,dx.
\end{array}
$$
Passing  the last three terms  from left side to the right  side  and using  some operations along  with  the H\"older's  inequality  and Young's inequality, we have
\begin{equation}\label{A5-eq6}
\begin{array}{l}
\displaystyle -\frac{d}{dt}\|h_{m}\|^{2} + a_{0}\|\Delta h_{m}\|^{2}\leq \\
\noalign{\smallskip}\phantom{\int_G-jjk}

\displaystyle \hat{C}_{4}\left(\|\nabla \Delta y_{m}\|^{2} \|\Delta y_{m}\|^{2} + \|\nabla y_{m}\|^{2} \right)\|\Delta h_{m}\|^{2}\\
\noalign{\smallskip}\phantom{\int_G-jjkDDDss}

\displaystyle +\hat{C}_{4}\|\Delta y_{m}\|^{2} \|\nabla \Delta y_{m}\|^{2} \|\nabla h_{m}\|^{2} +\frac{a_{0}}{4}\|\Delta h_{m}\|^{2}.
\end{array}
\end{equation}
Notice that in the proof of Lemma \ref{well-p1}, when $\xi$ and $u$  are  small sufficiently, we have 
$$
\hat{C}_{5}\|\Delta y_{m}\|^{2} \|\nabla \Delta y_{m}\|^{2}\leq \frac{a_{0}}{4}
$$
and 
$$
\hat{C}_{5}\|\nabla \Delta y_{m}\|^{2} \leq \frac{a_{0}}{4}.
$$
By last two inequalities and  the inequality \eqref{A5-eq5}  in \eqref{A5-eq6}, we have
$$
\begin{array}{l}
	\displaystyle -\frac{d}{dt}\|\nabla h_{m}\|^{2}+\frac{a_{0}}{4}\|\Delta y_{m}\|^{2} \leq\displaystyle \hat{C}_{6} \left( \|u\|^{2}_{X_{0}}+ \|\xi\|^{2}_{X_{0}}+ \|\nabla h_{m}\|^{2} \right).
\end{array}
$$
Using Gronwall's inequality, we have

\begin{equation}\label{A5-eq7}
	\displaystyle \|\nabla h_{m}\|^{2} + \frac{a_{0}}{4}\|\Delta h_{m}\|^{2}_{L^{2}(Q)} \leq \hat{C}_{7}\left(\|u\|^{2}_{X_{0}}+\|\xi\|^{2}_{X_{0}} \right).
\end{equation}

{\bf Estimate III:} Taking $w=h'_{m}$  in \eqref{A5-eq2}, we have
$$
\begin{array}{l}
	\displaystyle \|h'_{m}\|^{2} + \int_{\Omega}\nabla \cdot \Big( Da(\nabla y_{m})\left(\nabla y\cdot\nabla h_{m}\right)+a(\nabla y_{m})\nabla h_{m} \Big) h'_{m}\,dx\\
\noalign{\smallskip}\phantom{\int_G-jjkDDDssDF}

\displaystyle-\int_{\Omega}f'(y_{m})h_{m}h'_{m}\,dx=-\int_{\Omega}y_{m}\chi_{\mathcal{O}}h'_{m}\,dx.
\end{array}
$$
Passing  the last two terms  from left side to the right  side  and using  some operations along  with  the H\"older's inequality and Young's inequality , we have
$$
\begin{array}{l}
	\displaystyle \|h'_{m}\|^{2}\leq \frac{\|h'_{m}\|^{2}}{2} + \hat{C}_{8}\left(\|\Delta h_{m}\|^{2} + \|\nabla \Delta y_{m}\|^{2} \|\Delta h_{m}\|^{2}\right).
\end{array}
$$
From \eqref{A5-eq2} and integrating in $(0,T)$ 
$$
\displaystyle \|h'_{m}\|^{2}\leq \hat{C}_{9} \|\Delta h_{m}\|^{2},
$$
and using \eqref{A5-eq7}, we have
\begin{equation}\label{A5-eq9}
\begin{array}{l}
	\displaystyle \|h'_{m}\|^{2}_{L^{2}(Q)} \leq \hat{C}_{10}\left(\|u\|^{2}_{X_{0}}+\|\xi\|^{2}_{X_{0}} \right).
\end{array}
\end{equation}
Now, we can obtain from estimate \eqref{A5-eq2}, \eqref{A5-eq5},\eqref{A5-eq7} and \eqref{A5-eq9} the following results
$$
\left\{\begin{array}{l}
(y_m)\,\, \mbox{is bounded in }\,\, L^{\infty}(0,T;H^{3}(\Omega))\cap L^{2}(0,T;H^{4}(\Omega)),\\
\noalign{\smallskip} 
(y'_{m})\,\, \mbox{is bounded in }\,\, L^{\infty}(0,T;H^{1}_{0}(\Omega))\cap L^{2}(0,T;H^{2}(\Omega)),\\
\noalign{\smallskip} 
(y''_{m})\,\, \mbox{is bounded in }\,\, L^{2}(0,T;L^{2}(\Omega)),\\
\noalign{\smallskip} 
(h_m)\,\, \mbox{is bounded in }\,\, L^{\infty}(0,T;H^{1}_{0}(\Omega))\cap L^{2}(0,T;H^{2}(\Omega)),\\
\noalign{\smallskip} 
(h'_{m})\,\, \mbox{is bounded in }\,\, L^{2}(0,T;L^{2}(\Omega)).\\
\end{array}\right.
$$
All these uniform bounds allow to take limits in \eqref{A5-eq1} (at least for a subsequence) as $m\to \infty$. Indeed, the unique delicate point is the a.e convergence of $a(\nabla y_m)$. But this is a consequence of the fact that the sequence $(y_m)$ is pre-compact in $L^{2}(0,T;H^{2}(\Omega))$ and $a \in C^{3}(\mathbb{R}^{N})$.

The uniqueness of the strong solution to \eqref{A1} can be proved by argument standards (to see \cite{Rincon}).

\end{proof}


\section{Proof of Proposition \ref{char-e} }\label{Apex_3}

For  any $\hat{y}_{0}\in H^{3}(\Omega)\cap H^{1}_{0}(\Omega)$  satisfying $\|\hat{y}_{0}\|_{H^{3}(\Omega)}=1$ and $|\tau|<\tau_{0}$, let us denote by  $y_{\tau}$  the solution of equation \eqref{EC1}  associated to $\tau$  and $u$. For to prove \eqref{eq-def-ins}, first we prove

\begin{equation}\label{AP-eq1}
    \left. \frac{\partial \phi}{\partial \tau}(y_{\tau}) \right|_{\tau =0}=\int_{\Omega}h(0)\hat{y}_{0}\,dx, \,\,\,\,\forall\, \hat{y}_{0}\in H^{3}(\Omega)\cap H^{1}_{0}(\Omega)\,\,\,\mbox{with}\,\,\,\|\hat{y}_{0}\|_{H^{3}(\Omega)}=1,
\end{equation}

and 

\begin{equation}\label{AP-eq2}
y_{\tau}\to y\,\,\mbox{in}\,\,L^{2}(Q)\,\,\mbox{as}\,\,\tau \to 0.
\end{equation}

Let us denote by $w_{\tau}=y_{\tau}-y$. Then, $w_{\tau}$ satisfy
\begin{equation}\label{AP-eq3}
\left\{\begin{array}{lll}
w_{\tau,t}-\nabla \cdot \Big(a(\nabla y_{\tau})\nabla y_{\tau}-a(\nabla y)\nabla y \Big) + f(y_{\tau})-f(y)=0 & \mbox{in} & Q,\\
\noalign{\smallskip} 

w_{\tau}=0 &\mbox{on} & \Sigma,\\
\noalign{\smallskip} 

w_{\tau}(0)=\tau \hat{y}_{0} &\mbox{in} &\Omega,
\end{array}\right.
\end{equation}
multiplying on both  sides in $\eqref{AP-eq3}_{1}$ by $w_{\tau}$  and  integrating in $\Omega$, we have

$$
\begin{array}{l}
\displaystyle \frac{1}{2}\frac{d}{dt} \left( \int_{\Omega}|w_{\tau}|^{2}\,dx \right) + \int_{\Omega}a(\nabla y_{\tau})|\nabla w_{\tau}|^{2}\,dx\\
\noalign{\smallskip} 

\displaystyle +\int_{\Omega} \Big(a(\nabla y_{\tau})-a(\nabla y) \Big) \nabla y \cdot \nabla w_{\tau}\,dx + \int_{\Omega} \Big(f(y_{\tau})-f(y) \Big) w_{\tau}\,dx=0.
\end{array}
$$
Using the properties of $a(\cdot)$, $f(\cdot)$, Poincare's inequality and Young's inequality, we have
$$
\begin{array}{l}
	\displaystyle \frac{1}{2}\frac{d}{dt}\|w_{\tau}\|^{2} + \frac{a_{0}}{2}\|\nabla w_{\tau}\|^{2}\leq C \|w_{\tau}\|^{2}.
\end{array}
$$
By  Gronwall's inequality, we  obtain that
$$
\|w_{\tau}\|^{2}\leq C\|w_{\tau}(0)\|^{2}.
$$
Due to $w_{\tau}(0)\to 0$  in $L^{2}(\Omega)$  (as $\tau \to 0$), we find that
$$
w_{\tau}\to 0\,\,\,\mbox{in}\,\,\, L^{\infty}(0,T;L^{2}(\Omega)).
$$
Therefore,
$$
y_{\tau}\to y\,\,\,\mbox{in}\,\,\,L^{\infty}(0,T;L^{2}(\Omega)),\,\,\,\mbox{as}\,\,\tau\to 0.
$$
This yield \eqref{AP-eq2}

In order to prove \eqref{AP-eq1}, we need to prove that
$$
\frac{y_{\tau}-y}{\tau}\to p\,\,\,\mbox{in}\,\,\,L^{2}(Q)\,\,\,\mbox{as}\,\,\,\tau \to 0,
$$
where $p$  satisfies the following  equation

\begin{equation}\label{AP-eq4}
\left\{\begin{array}{lll}
p_{t}-\nabla \cdot \Big(a(\nabla y)\nabla p+(Da(\nabla y)\cdot \nabla p)\nabla y \Big) + f'(y)p = 0 & \mbox{in} &  Q,\\
\noalign{\smallskip} 

p=0  &\mbox{on} &\Sigma,\\
\noalign{\smallskip} 

p(0)=\hat{y}_{0} & \mbox{in} &\Omega.
\end{array}\right.
\end{equation}

We denote by $\displaystyle p_{\tau}=\frac{y_{\tau}-y}{\tau}-p$,  then it is easy to check that $p_{\tau}$ satisfies

\begin{equation}\label{AP-eq5}
\left\{\begin{array}{lll}
\displaystyle p_{\tau,t}-\nabla \cdot \Big(a(\nabla y_{\tau})\nabla p_{\tau} \Big) + \nabla \cdot \Big((a(\nabla y))-a(\nabla y_{\tau})\nabla p \Big) &  &  \\
\noalign{\smallskip} 
\phantom{\int_G}
\displaystyle -\nabla \cdot\left[\left(\frac{a(\nabla y_{\tau})-a(\nabla y)}{\tau}-Da(\nabla y)\cdot \nabla p \right)\nabla y \right] &\mbox{in}  & Q, \\
\noalign{\smallskip} 
\phantom{\int_G}
\displaystyle +\left(\frac{f(y_{\tau})-f(y)}{\tau}-f'(y)p \right)=0 &  &\\
\noalign{\smallskip} 

p_{\tau}=0 &\mbox{on} & \Sigma,\\
\noalign{\smallskip} 

p_{\tau}(0)=0 &\mbox{in}  &\Omega. 
\end{array}\right.
\end{equation}

Multiplying both  sides  of the  first  equation  of \eqref{AP-eq5}  by $p_{\tau}$ and integrating  it  in $\Omega$  we  obtain  that

\begin{equation}\label{AP-eq6}
\begin{array}{l}

\displaystyle \frac{1}{2}\frac{d}{dt}\int_{\Omega}|p_{\tau}|^{2}\,dx+\int_{\Omega}a(\nabla y_{\tau})|\nabla p_{\tau}|^{2}\,dx\\
\noalign{\smallskip} 

\phantom{\int_G-jjk}
\displaystyle =\int_{\Omega} \Big(a(\nabla y)-a(\nabla y_{\tau})\Big)\nabla p \nabla p_{\tau}\,dx\\
\noalign{\smallskip} 
\phantom{\int_G-jjk}

\displaystyle -\int_{\Omega}\left(\frac{a(\nabla y_{\tau})-a(\nabla y)}{\tau}-Da(\nabla y)\cdot \nabla p  \right)\nabla y  \nabla p_{\tau}\,dx\\
\noalign{\smallskip} 
\phantom{\int_G-jjk}

\displaystyle -\int_{\Omega}\left(\frac{f(y_{\tau})-f(y)}{\tau}-f'(y)p \right)p_{\tau}\,dx\\
\noalign{\smallskip} 
\phantom{\int_G-jjk}

= J_{1}+J_{2}+J_{3}.
\end{array}
\end{equation}

Using  the properties of $a(\cdot)$  and   H\"older's  inequality , we have

\begin{equation}\label{AP-eq7}
\begin{array}{l}
\displaystyle |J_{1}|\leq \epsilon \int_{\Omega}|\nabla p_{\tau}|^{2}\,dx+C\int_{\Omega}|w_{\tau}|^{2}\,dx.
\end{array}
\end{equation}
Similarly, for some $\lambda_{*}\in[0,1]$ \,we have

\begin{equation}\label{AP-eq7-2}
\begin{array}{l}
\displaystyle \displaystyle |J_{2}|\leq \left|\int_{\Omega}\left(Da(\lambda_{*}\nabla y_{\tau}+(1-\lambda_{*})\nabla y)\cdot (\nabla \frac{w_{\tau}}{\tau})-Da(\nabla y)\cdot \nabla p \right)\nabla  y\nabla p_{\tau}\,dx \right|\\
\noalign{\smallskip} 
\phantom{\int_G-jjk}
\displaystyle \leq  \int_{\Omega}|Da(\lambda_{*}\nabla y_{\tau}+(1-\lambda_{*})\nabla y)-Da(\nabla y)|\cdot | \nabla p |\cdot |\nabla  y|\cdot|\nabla p_{\tau} |\,dx\\

\noalign{\smallskip} 
\phantom{\int_G-jjk}
+ \displaystyle \int_{\Omega}|Da(\lambda_{*}\nabla y_{\tau}+(1-\lambda_{*})\nabla y )|\cdot |\nabla  y|\cdot|\nabla p_{\tau} |^{2}\,dx\\

\noalign{\smallskip} 
\phantom{\int_G-jjk}
\displaystyle \leq  C\int_{\Omega}|\nabla w_{\tau}|\cdot | \nabla p |\cdot |\nabla  y|\cdot|\nabla p_{\tau} |\,dx
+ \displaystyle C\int_{\Omega}|\nabla  y|\cdot|\nabla p_{\tau} |^{2}\,dx.
\end{array}
\end{equation}
For  $\xi$  and $u$ sufficiently  small and from Lemma \ref{Lemma-eq-char}, we have
$$
\begin{array}{l}
	\|\nabla y\|_{C(0,T;L^{\infty}(\Omega))}\leq C\left(\|\xi\|_{X_{0}}+\|u\|_{X_{0}} \right),
\end{array}
$$
and 
$$
\|\nabla p\|_{C(0,T;L^{\infty}(\Omega))} \leq C\left( \|\xi\|_{X_{0}} + \|u\|_{X_{0}} + \|\hat{y}_{0}\|_{H^{3}(\Omega)} \right).
$$
Using   H\"older's  inequality in \eqref{AP-eq7-2} and  this last inequality,   we have,
\begin{equation}\label{AP-eq8}
|J_{2}|\leq \epsilon \int_{\Omega}|\nabla p_{\tau}|^{2}\,dx+C\int_{\Omega}|\nabla w_{\tau}|^{2}\,dx.
\end{equation}
Also, 
$$
\begin{array}{l}
\displaystyle |J_{3}|= \left| \int_{\Omega}\left( \frac{f(y_{\tau})-f(y)}{\tau}-f'(y)p\right)p_{\tau}\,dx \right|\\

\noalign{\smallskip} 
\phantom{|J_{3}|}
 \displaystyle = \left|\int_{\Omega}\left[ f'(\tilde{\lambda}_{*}y_{\tau}-(1-\tilde{\lambda}_{*}))p_{\tau}-\left(f'(\tilde{\lambda}_{*} y_{\tau}+(1-\tilde{\lambda}_{*})y)-f'(y)\right)p  \right]p_{\tau}\,dx \right|,\\
\end{array}
$$
for  some  $\tilde{\lambda}_{*}\in[0,1]$.

Now, using the fact that $f''$ is bounded and by Young's inequality,

$$
\begin{array}{l}

\displaystyle  |J_{3}|\leq C\int_{\Omega}|p_{\tau}|^{2}\,dx+\int_{\Omega}|w_{\tau}|\cdot|p|\cdot|p_{\tau}|\,dx\\
\noalign{\smallskip} 
\phantom{|J_{3}|}

\displaystyle \leq C\int_{\Omega}|p_{\tau}|^{2}\,dx+\int_{\Omega}|w_{\tau}|^{2}\cdot|p|^{2}\,dx\\
\noalign{\smallskip} 
\phantom{|J_{3}|}

\displaystyle \leq C\int_{\Omega}|p_{\tau}|^{2}\,dx+\|p\|^{2}_{C(0,T;L^{\infty}(\Omega))}\int_{\Omega}|w_{\tau}|^{2}\,dx,\\

\end{array}
$$
and we have,
\begin{equation}\label{AP-eq9}
|J_{3}|\leq C\int_{\Omega}|p_{\tau}|^{2}\,dx+C\int_{\Omega}|w_{\tau}|^{2}\,dx.
\end{equation}
From \eqref{AP-eq7}, \eqref{AP-eq8} and \eqref{AP-eq9}, we have
$$
\begin{array}{l}
\displaystyle \frac{1}{2}\frac{d}{dt}\int_{\Omega}|p_{\tau}|^{2}\,dx+\frac{a_{0}}{2}\int_{\Omega}|\nabla p_{\tau}|^{2}\,dx\\
\noalign{\smallskip} 
\phantom{|J_{3}|DDDDD}
\displaystyle \leq C\int_{\Omega}|w_{\tau}|^{2}\,dx+C\int_{\Omega}|p_{\tau}|^{2}\,dx.\\
\end{array}
$$
Using  Gronwall's inequality, we have
$$
\int_{\Omega}|p_{\tau}|^{2}\,dx\leq C\int_{\Omega}|w_{\tau}|^{2}\,dx.
$$
From last inequality, since
$$w_{\tau}\to 0\,\,\,\,\mbox{as}\,\,\,\,\,\,\tau \to 0,$$
we have that,
$$
p_{\tau}\to 0\,\,\,\,\,\mbox{as} \,\,\,\, \tau\to 0.
$$
Therefore,
$$
\frac{y_{\tau}-y}{\tau}\to p\,\,\,\,\,\mbox{in}\,\,\,\, L^{2}(Q)\,\,\,\,\mbox{as}\,\,\, \tau\to 0.
$$
We  know that,
$$
 \left.\frac{\partial \Phi(y_{\tau})}{\partial \tau} \right|_{\tau=0}=\frac{1}{2}\lim_{\tau \to 0}\iint_{\mathcal{O}\times (0,T)}(y_{\tau}+y)\left(\frac{y_{\tau}-y}{\tau} \right) dxdt,
$$
where $y$  is the solution of \eqref{eq-char}, then
\begin{equation}\label{AP-eq10}
\left.\frac{\partial \Phi(y_{\tau})}{\partial \tau} \right|_{\tau=0}=\iint_{\mathcal{O}\times(0,T)}py\,dxdt,
\end{equation}
with $p$  is the solution in \eqref{AP-eq4}.\\

Now, multiplying in \eqref{AP-eq4}  by $h$  and integrate  in $Q$, we have
\begin{equation}\label{AP-eq11}
\iint_{\mathcal{O}\times (0,T)}py\,dxdt=\int_{\Omega}h(0)\hat{y}\,dx, \,\,\,\forall\,\hat{y}_{0}\in H^{3}(\Omega)\,\,\,\mbox{with}\,\,\,\|\hat{y}_{0}\|_{H^{3}(\Omega)}=1,
\end{equation}
from \eqref{AP-eq10} and \eqref{AP-eq11}, we have
$$
\left.\frac{\partial \Phi(y_{\tau})}{\partial \tau} \right|_{\tau=0}=0\,\,\,\,\mbox{if  and only if}\,\,\,\,\,\,h(0)=0.
$$
This complete the proof of Proposition \ref{char-e}.

\end{appendices}



\end{document}